\numberwithin{equation}{section}
\begin{document}

\newtheorem{theorem}{Theorem}[section]
\newtheorem*{theorem*}{Theorem}
\newtheorem{proposition}[theorem]{Proposition}
\newtheorem{corollary}[theorem]{Corollary}
\newtheorem{lemma}[theorem]{Lemma}

\numberwithin{theorem}{section}

\theoremstyle{definition}
\newtheorem{definition}[theorem]{Definition}
\newtheorem{example}[theorem]{Example}
\newtheorem{examples}[theorem]{Examples}
\newtheorem{remark}[theorem]{Remark}
\newtheorem{notation}[theorem]{Notation}

\date{}
\title{Basic Kirwan Surjectivity for K-Contact Manifolds}
\author{Lana Casselmann\thanks{lana.casselmann@uni-hamburg.de}}
\affil{\small{GRK 1670\\ University of Hamburg\\ Bundesstra{\ss}e 55\\
20146 Hamburg, Germany}}

\newcommand{\R}{\mathbb{R}}
\newcommand{\C}{\mathbb{C}}
\newcommand{\g}{\mathfrak{g}}
\newcommand{\X}{\mathfrak{X}}
\newcommand{\CF}{\mathcal{F}}
\newcommand\numberthis{\addtocounter{equation}{1}\tag{\theequation}}

\maketitle

\vspace{-9pt}
\begin{abstract}
We prove an analogue of Kirwan surjectivity in the setting of equivariant basic cohomology of $K$-contact manifolds. If the Reeb vector field induces a free $S^1$-action, the $S^1$-quotient is a symplectic manifold and our result reproduces Kirwan's surjectivity for these symplectic manifolds. We further prove a Tolman-Weitsman type description of the kernel of the basic Kirwan map for $S^1$-actions and show that torus actions on a $K$-contact manifold that preserve the contact form and admit 0 as a regular value of the contact moment map are equivariantly formal in the basic setting.
\begin{center} The final publication is available at Springer via\\ \href{http://dx.doi.org/10.1007/s10455-017-9552-6}{http://dx.doi.org/10.1007/s10455-017-9552-6}.
\end{center}
\end{abstract}
\vspace{-10pt}

\tableofcontents

\subsubsection*{Acknowledgments}\vspace{-5pt}
The author would like to thank Oliver Goertsches for suggesting the topic of this paper, numerous helpful discussions, his careful guidance and critical reading of the manuscript, as well as Jonathan Fisher for helpful conversations. The author would further like to thank the anonymous reviewers for their careful reading and constructive comments on the paper. This research was supported by the RTG 1670 ``Mathematics inspired by String Theory and Quantum Field Theory'', funded by the Deutsche Forschungsgemeinschaft (DFG).

\section{Introduction}
The well known Kirwan surjectivity asserts that if $\Psi$ is a moment map for a Hamiltonian action of a compact group $G$ on a compact symplectic manifold $N$ and $0$ a regular value thereof, then the Kirwan map $H^*_G(N) \rightarrow H^*_G(\Psi^{-1}(0))$ induced by the inclusion is an epimorphism (\cite{kirwan1984cohomology}). The analogous statement for contact manifolds is known to no longer hold, as the following example by Lerman shows (cf. \cite{lerman04question}, where, apart from giving the counterexample, he states that the question what the kernel and cokernel of the induced map are in the contact case are still of interest).
\begin{example}
	Consider the 3-sphere $S^3= \{z \in \C^2 \mid |z_1|^2+|z_2|^2 =1\} \subset \C^2$ with the $S^1$-action defined by $\lambda \cdot (z_1, z_2) = (\lambda z_1,\lambda^{-1} z_2)$. Then $X(z_1, z_2)=(iz_1, -iz_2)$ is the fundamental vector field of $1 \in \R \simeq \mathfrak{s^1}$. Considering the ($S^1$-invariant) contact form $\alpha = \tfrac{i}{2}\sum_{j=1}^2(z_j d\bar z_j- \bar z_j d z_j )$, we compute the contact moment map (see Section \ref{subsec cont mom map}) to be 
	\[\Psi \colon S^3 \rightarrow \R, \qquad (z_1,z_2) \mapsto |z_1|^2 - |z_2|^2.\]
	Since the $S^1$-action is free, the equivariant cohomology is simply the ordinary cohomology of the $S^1$-quotient and we compute 
	\[ H^*_{S^1}(S^3)=H^*(\C P^1), \qquad H^*_{S^1}(\Psi^{-1}(0))=H^*(S^1).\]
	But there cannot exist an epimorphism from $H^*(\C P^1)$ to $H^*(S^1)$.
\end{example}
This motivates the search for a modification of the Kirwan map in the contact case such that surjectivity holds. In the setting of basic equivariant cohomology, our result states as follows.

\begin{theorem*}
Let $(M, \alpha)$ be a compact $K$-contact manifold and $\xi$ its Reeb vector field. Let $G$ be a torus that acts on $M$, preserving $\alpha$. Denote with $\Psi \colon M \rightarrow \g^*$ the contact moment map and suppose that $0$ is a regular value of $\Psi$. Then the inclusion $\Psi^{-1}(0)\subset M$ induces an epimorphism in equivariant basic cohomology
\[ H^*_G(M,\CF) \longrightarrow H^*_G(\Psi^{-1}(0), \CF).\]
\end{theorem*}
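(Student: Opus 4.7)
The plan is to emulate Kirwan's original symplectic argument in the basic framework, using the squared norm of the contact moment map,
\[ f := \lVert \Psi \rVert^2 \colon M \longrightarrow \R, \]
computed with respect to some $G$-invariant inner product on $\g^*$, as an equivariant basic Morse--Bott function whose minimum stratum is $\Psi^{-1}(0)$. First I would verify that $f$ is $G$-invariant and basic: because $G$ preserves $\alpha$, each fundamental vector field $X_M$ commutes with the Reeb field $\xi$, so for every $X \in \g$ the component $\langle \Psi, X \rangle = \alpha(X_M)$ satisfies $L_\xi \langle \Psi, X\rangle = (L_\xi \alpha)(X_M) + \alpha([\xi, X_M]) = 0$ and hence $i_\xi\, d\langle \Psi, X \rangle = 0$. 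Thus $\Psi$ is a $G$-equivariant, $\g^*$-valued basic function, and $f$ lies in the $G$-invariant basic functions $\Omega^0(M,\CF)^G$.

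Next I would analyse $\mathrm{Crit}(f)$ exactly as in the symplectic case: it decomposes as a finite disjoint union $\bigsqcup_\beta C_\beta$, where each $C_\beta$ is contained in the fixed-point set of the one-parameter subgroup generated by $\beta \in \g$ and meets $\Psi^{-1}(\beta)$, with $C_0 = \Psi^{-1}(0)$ the absolute minimum. Since $f$ is basic, each $C_\beta$ is $\xi$-saturated, and since $f$ is $G$-invariant, each $C_\beta$ is $G$-invariant. A Hessian computation, carried out transversely to the Reeb foliation, should show that $f$ is a basic Morse--Bott function and that the negative normal bundle $\nu^-_\beta \to C_\beta$ is a $G$-equivariant basic vector bundle of locally constant transverse rank.

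With these ingredients, I would run the standard Morse--Bott induction on the filtration of $M$ by sublevel sets $M^{\leq c} := f^{-1}((-\infty,c])$. For consecutive critical values $c < c'$, the long exact sequence of the pair $(M^{\leq c'}, M^{\leq c})$ in equivariant basic cohomology, together with an equivariant basic Thom--Gysin isomorphism
\[ H^*_G(M^{\leq c'}, M^{\leq c};\CF) \;\cong\; H^{*-\mathrm{rk}\,\nu^-_\beta}_G(C_\beta;\CF), \]
reduces the theorem to showing that every connecting homomorphism vanishes, equivalently that for each non-minimum $C_\beta$ multiplication by the equivariant basic Euler class $e_G(\nu^-_\beta) \in H^*_G(C_\beta;\CF)$ is injective. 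Starting from $c = 0$ and sweeping upward to $c = \max f$, this would produce the desired epimorphism $H^*_G(M;\CF) \twoheadrightarrow H^*_G(\Psi^{-1}(0);\CF)$.

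The main obstacle, and the technical heart of the proof, is precisely this non-zero-divisor property of the basic equivariant Euler class at each non-minimum critical component. In Kirwan's symplectic proof it is obtained by restricting to the subtorus $H_\beta \subset G$ generated by $\beta$ and observing that $H_\beta$ acts on the fibres of $\nu^-_\beta$ with weights lying strictly on one side of a hyperplane in $\mathfrak{h}_\beta^*$, which makes $e_{H_\beta}(\nu^-_\beta)$ a product of nonzero linear forms and hence a non-zero divisor. To transport this to the basic setting I would enlarge the acting group to $G \times T_\xi$, where $T_\xi$ is the closure of the Reeb flow in the isometry group of a compatible $K$-contact metric; the hypothesis that $G$ preserves $\alpha$ guarantees that $T_\xi$ commutes with $G$. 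Identifying $H^*_G(M;\CF)$ with the appropriate quotient of $H^*_{G \times T_\xi}(M)$ via the Cartan model, and applying the weight decomposition of $\nu^-_\beta$ under the full $(G \times T_\xi)$-action, the Chern--Weil expression for $e_{G \times T_\xi}(\nu^-_\beta)$ again becomes a product of weight factors; descending the resulting non-zero-divisor statement to $H^*_G(\,\cdot\,;\CF)$ is the delicate step where the $K$-contact hypothesis is essential, and completing it closes the Morse--Bott induction and yields basic Kirwan surjectivity.
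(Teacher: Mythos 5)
Your proposal follows Kirwan's original norm-square strategy, and the critical step you gloss over is precisely the one that is not available: the assertion that $f = \lVert \Psi \rVert^2$ is a (basic) Morse--Bott function. This already fails in the symplectic setting --- $\lVert \mu \rVert^2$ is in general \emph{not} Morse--Bott, which is why Kirwan had to develop the weaker notion of a minimally degenerate function and devote an entire chapter to establishing it for $\lVert \mu \rVert^2$. Whether the norm square of the \emph{contact} moment map is even minimally degenerate is an open question (the paper states this explicitly in the introduction), so the sentence ``a Hessian computation \ldots should show that $f$ is a basic Morse--Bott function'' is not a computation you can carry out; it is the unresolved heart of the matter. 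A second, related problem is your description of $\mathrm{Crit}(f)$ as a union of pieces $C_\beta$ sitting inside fixed-point sets of one-parameter subgroups: in the contact setting $\mathrm{Crit}(\Psi^X) = \{x \mid X_M(x) \in \R\,\xi(x)\}$ rather than $\{X_M = 0\}$, so $\Psi$ is not a non-degenerate abstract moment map and the symplectic structure theory of the critical set of $\lVert \mu \rVert^2$ does not transfer. Your final paragraph --- enlarging the group to $G \times T_\xi$ and proving that the equivariant basic Euler class of the negative normal bundle is a non-zero divisor via the weight decomposition --- is sound in spirit and is essentially what the paper does in its Lemma on Euler class injectivity, but it rests on a Morse-theoretic skeleton that has not been established.

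The paper avoids the norm square entirely by a reduction-in-stages argument: it constructs a carefully chosen basis $(X_1,\dots,X_r)$ of $\g$ so that $0$ is a regular value of each partial map $f_s = (\Psi^{X_1},\dots,\Psi^{X_s})$, sets $Y_s = f_s^{-1}(0)$, and proves that each \emph{single component} $\Psi^{X_{s+1}}|_{Y_s}$ is an honest $G\times T$-invariant Morse--Bott function (this is where the $K$-contact metric, the tensor $J$, and a Jacobi-field argument enter). Morse theory on each $\Psi^{X_{s+1}}|_{Y_s}$, combined with the Euler class injectivity and a Mayer--Vietoris argument gluing the two half-spaces $\{\pm\Psi^{X_{s+1}}|_{Y_s} \ge 0\}$ along $Y_{s+1}$, yields a surjection $H^*_G(Y_s,\CF) \twoheadrightarrow H^*_G(Y_{s+1},\CF)$ at each stage, and composing these gives the theorem. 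If you want to salvage your approach you would either have to prove minimal degeneracy of $\lVert \Psi \rVert^2$ in the contact setting and develop the attendant Morse theory for minimally degenerate basic functions, or switch to the one-component-at-a-time strategy.
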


Kirwan's original proof makes use of the minimal degeneracy of the norm square of the symplectic moment map, a property that is weaker than the Morse-Bott property and which was established in \cite[Chapter 4]{kirwan1984cohomology}. The question of minimal degeneracy of the norm square of the contact moment map is still unanswered. Furthermore, Kirwan makes use of the topological definition of equivariant cohomology of a $G$-manifold $N$ as ordinary cohomology of the space $M\times_G EG$, where $EG$ denotes the total space in the classifying bundle of $G$. This tool is not available in the basic setting. Hence, Kirwan's approach does not naturally extend to the basic setting on $K$-contact manifolds. Instead, we obtain the epimorphism as a sequence of surjective maps. Goldin introduced the reduction in stages strategy in \cite{goldin2002effective}. She considers a splitting $S^1\times S^1 \times ... \times S^1$ of a subtorus $K \subset G$. By successively taking $S^1$-quotients, considering the residual 
action of the quotient group on the quotient and applying a surjectivity result for the $S^1$-case, 
she obtains a sequence of surjections
\[H_G(N) \to  H_{G/S^1}(N//S^1)\to H_{G/(S^1\times S^1)}((N//S^1)//S^1)\to \cdots \to H_{G/K}(N//K).\]
However, the quotient $N//S^1$ is in general an orbifold, not a manifold. Goldin's proof was made rigorous by Baird-Lin in \cite{baird2010topology}. Instead of considering a sequence of quotients, they rather consider a sequence of restrictions, retaining the action of the whole group. This idea was formulated by Ginzburg-Guillemin-Karshon in \cite[Section G.2.2]{ggk2002moment} for so-called non-degenerate abstract moment maps. Our approach is based on the proof of \cite[Theorem G.13]{ggk2002moment} and a corrected version thereof in \cite[Proposition~3.12,~Appendix~B]{baird2010topology}. The contact moment map, however, is in general not a non-degenerate abstract moment map (see Remark \ref{rem not non degenerate}), and \cite[Proposition~3.12]{baird2010topology} additionally requires a $G$-invariant almost complex structure. Hence, while providing an alternative proof of Kirwan surjectivity on symplectic manifolds, it does not hold in our case.

This paper is structured as follows. 
In Section \ref{sec contact mfd}, we recall fundamentals of $K$-contact geometry and consider torus actions on compact $K$-contact manifolds that leave the contact form invariant and allow for 0 to be a regular value of the contact moment map $\Psi$. 
We show that a basis $(X_s)$ of the Lie algebra of the torus can be chosen in such a way that certain axioms are fulfilled. 
We derive the Morse-Bott property of the functions $\Psi^{X_{s+1}}|_{Y_s}$, where $Y_s = (\Psi^{X_{1}}, ..., \Psi^{X_{s}})^{-1}(0)$. 
We begin Section \ref{sec eq basic coh} by briefly recalling the concept of equivariant cohomology and then proceed to discuss equivariant basic cohomology of $K$-contact manifolds and properties thereof. 
The surjectivity result is stated and proved in Section \ref{sec basic kirwan surj}. 
In Section \ref{sec ex}, we present examples and establish that in the case that the Reeb vector field induces a free $S^1$-action, our result reproduces the Kirwan surjectivity for the 
$S^1$-quotient. 
A Tolman-Weitsman type description of the kernel of the basic Kirwan map for $S^1$-actions is derived in Section \ref{sec kernel}. We conclude this paper by proving the equivariant formality of said torus actions on $K$-contact manifolds.

\section{\texorpdfstring{$K$}{K}-Contact Manifolds}\label{sec contact mfd}
\subsection{Preliminaries}
This section serves the purpose to establish terminology and notation and to recall basic facts on $K$-contact manifolds. 
We work with the following notion of contact manifolds.

\begin{definition}
	A \emph{contact manifold} is a pair $(M, \alpha)$, where $M$ is a manifold of dimension $2n+1$, and $\alpha \in \Omega^1(M)$ is a \emph{contact form}, i.e., $\alpha \wedge (d\alpha)^n$ is nowhere zero.
\end{definition}

Recall that there is a unique vector field $\xi \in \X(M)$, the \emph{Reeb vector field}, defined by $\alpha(\xi)=1$, $d\alpha(\xi, -) = 0$, so that we have a splitting $TM \cong \ker\alpha \oplus \R \xi$. 

The flow of $\xi$ is denoted by $\psi_t$ and the 1-dimensional foliation it induces by $\mathcal{F}$.

If $\xi$ induces a free $S^1$-action, $M/\{\psi_t\}$ is a manifold and $d\alpha$ descends to a symplectic form on $M/\{\psi_t\}$ (Boothby-Wang fibration). This, however, is usually not the case.

\begin{definition}
	Let $(M, \alpha)$ be a contact manifold. A Riemannian metric $g$ on $M$ is called \emph{contact metric} if 
	\begin{enumerate}[(i)]
	\item $ \ker \alpha \perp_g \ker d\alpha $
	\item $g|_{\ker d\alpha} = \alpha \otimes \alpha$ 
	\item $g|_{\ker \alpha}$ is compatible with the symplectic form $d\alpha$, i.e., there exists a $(1,1)$-tensor field $J$ on $\Gamma(\ker \alpha)$ such that $g=d\alpha(J\cdot, \cdot)$ and $J^2=-\operatorname{Id}$.
\end{enumerate} 
	$(M, \alpha)$ is called \emph{$K$-contact} if there exists a contact metric $g$ on $M$ with $L_{\xi} g = 0$, i.e., such that $\xi$ is Killing.
\end{definition}

\begin{example}\label{ex weighted sphere}
	For $n \geq 1$ and $w \in \R^{n+1}, w_j>0$, consider the sphere 
	\[S^{2n+1}= \left\lbrace z=(z_0, ..., z_n) \in \C^{n+1} \mid \sum\nolimits_{j=0}^n |z_j|^2 = 1 \right\rbrace \subset \C^{n+1},\]
	endowed with the following contact form $\alpha$ and corresponding Reeb vector field $\xi$
	\[\alpha_w =\frac{\tfrac{i}{2} \left( \sum_{j=0}^n z_j d\bar z_j - \bar z_j d z_j \right)}{\sum_{j=0}^n w_j|z_j|^2} , \quad \xi_w = i\left( \sum_{j=0}^{n} w_j(z_j \tfrac{\partial}{\partial z_j} - \bar z_j \tfrac{\partial}{\partial \bar z_j}) \right).\]
	$(S^{2n+1}, \alpha_w)$ is called a \emph{weighted Sasakian structure on $S^{2n+1}$}, cf. \cite[Example~7.1.12]{boyer2008sasakian}. In particular, $(S^{2n+1}, \alpha_w)$ is a $K$-contact manifold with respect to the metric induced by the embedding $S^{2n+1} \hookrightarrow \C^{n+1}$. For $w=(1, ..., 1)$, we obtain the standard contact form on the sphere. Notice that the underlying contact \emph{structure} $\ker \alpha_w$ is independent of the choice of weight $w$.
\end{example}

Throughout this paper, we consider a connected, compact $K$-contact manifold $(M,\alpha)$ with Reeb vector field $\xi$ and contact metric $g$, on which a torus $G$ acts in such a way that it preserves the contact form $\alpha$, i.e., $g^*\alpha = \alpha$ for every $g \in G$. 
We refer to, e.g., \cite[Section~2]{GNTequivariant} or \cite{blair1976contact} for preliminary considerations. 
Note that only from Lemma \ref{lem GxTinv metric} on we will assume the $G$-action to be isometric. We denote the Lie algebra of $G$ by $\g$ and the fundamental vector field induced by $X \in \g$ on $M$ by $X_M$, i.e.,
\[ X_M(x) = \left.\frac{d}{dt}\right|_{t=0} \exp(tX)\cdot x.\]

$(M,\alpha, g)$ admits a $(1,1)$-tensor $J$ such that we have the following identities  for all $X,Y \in \mathfrak{X}(M)$, where $\nabla$ denotes the Levi Civita connection of $g$ (see, e.g.,~\cite[pp.~25f,~p.~64]{blair1976contact})

\begin{align}
J\xi &= 0,  \quad \text{$J^2 = -\operatorname{Id} + \alpha \otimes \xi$},\label{eq J xi and J2}\\
 \alpha(X)&=g(\xi,X), \\
 g(X,JY) &=d\alpha(X,Y),\\
g(JX,JY)&=g(X,Y)-\alpha(X)\alpha(Y),\\
 (\nabla_X J)Y &= R(\xi,X)Y,\label{eq nabla J=R} \\ 
\nabla_X \xi &=-JX. \label{eq nabla xi}
\end{align}

We set \label{def M xi}
\begin{align*}
 M_\xi & := 
 \{ x \in M \mid \xi(x) \in T_x(G\cdot x)\},\\
 M_\varnothing & := 
 \{ x \in M \mid \xi(x) \notin T_x(G\cdot x)\}.
\end{align*}
 Recall that $T_x(G\cdot x) = \{X_M(x) \mid X \in \g\}$. For $x \in M_\xi$, choose any $X^x \in \g$ such that $X^x_M(x)=\xi(x)$. This $X^x$ is unique modulo $\g_x= \{X\in \g \mid X_M(x)=0\}$. We define the \emph{generalized isotropy algebra}\label{def gen isotropy} of $x \in M$ by
\[\widetilde{\mathfrak{g}}_x:=\left\lbrace X \in \g \mid X_M(x) \in \R \xi(x)\right\rbrace = \begin{cases}
                                                         \g_x \oplus \R X^x & x \in M_\xi\\
							 \g_x		    & x \in M_\varnothing
                                                        \end{cases}.\]
We denote by $T = \overline{\{\psi_t\}} \subset \mathit{Isom}(M,g)$ the torus acting on $M$ that is the closure of the Reeb flow in the isometry group of $(M,g)$. Note that $T$ is independent of the choice of contact metric. Since $G$ and $T$ commute (see Equation \eqref{eq fund vf foliate} below), we can then consider the action of the torus $H := G \times T$ on $M$. Since $\widetilde{\mathfrak{h}}_x = \mathfrak{h}_x \oplus \R \xi$, we have \[\widetilde{\mathfrak{g}}_x = \widetilde{\mathfrak{h}}_x \cap (\g \oplus \{0\}).\] 
Note that since $M$ is assumed to be compact, only finitely many different $\g_x$, $\mathfrak{h}_x$ and, hence, $ \widetilde \g_x$ occur.

\subsection{Basic Differential forms}

Let $\mathfrak{X}(\CF)$ denote the vector space of vector fields on $M$ that are tangent to the leaves of the foliation $\CF$, $\X(\CF) = C^\infty(M) \cdot \xi$. Differential forms whose contraction with and Lie derivative in the direction of an element of $\X(\CF)$ vanish are called \emph{basic}. Their subspace is denoted by 
\[\Omega(M, \CF):=\{\omega \in \Omega(M) \mid \mathcal{L}_X \omega = \iota_X \omega =0 \ \forall \ X \in \X(\CF) \}.\]
Cartan's formula directly yields that $\Omega(M, \CF)$ is differentially closed, i.e., for every $\omega \in \Omega(M, \CF)$, we have $d\omega \in \Omega(M, \CF)$. The cohomology of this subcomplex is called \emph{basic cohomology} of the foliated manifold $(M, \CF)$ and denoted by $H(M, \CF)$. For a more elaborate introduction to basic differential forms, also for more general foliations, the reader is referred to \cite{reinhart1959harmonic}.

By definition of the Reeb vector field, it is $0=\iota_\xi d\alpha$ and $1=\iota_\xi \alpha$, hence
\begin{equation}
	\mathcal{L}_\xi \alpha = d \iota_\xi \alpha + \iota_\xi d\alpha = 0. \label{eq lie xi}
\end{equation}
It follows that $\alpha$ is invariant under pullback by the Reeb flow 
\begin{equation}
	\psi_t^* \alpha \equiv \psi_0^* \alpha = \alpha. \label{eq flow alpha invariant}
\end{equation}
Furthermore, the uniqueness of the Reeb vector field implies that for every $p \in M$ and every $g \in G$, we have
\begin{equation}
	dg_p \xi(p)= \xi(gp). \label{eq dg reeb=reeb}
\end{equation}

This implies  that $[X_M, \xi]=0$ and, in particular, for every $f\cdot \xi \in \X(\CF)$ that
\begin{align}
	[X_M, f\cdot \xi]= X_M(f)\xi \in \X(\CF). \label{eq fund vf foliate}
\end{align}
 \begin{remark}
 	Equation \eqref{eq fund vf foliate} means that, given $X \in \g$, for every $Y \in \X(\CF)$, the commutator $[X_M,Y]$ is also an element of $\X(\CF)$; hence all fundamental vector fields are so called \emph{foliate vector fields} as defined by Molino (see \cite[Chapter~2.2]{molino1988riemannian}).
 \end{remark}
Recall the following definition (cf. \cite[Definition~3.1]{goertsches2010equivariant} or \cite[Chapter~2.2]{guillemin1999supersymmetry} for a formulation in the language of superalgebras).
\begin{definition}\label{def g-dga}
 Let $\mathfrak{k}$ be a finite dimensional Lie algebra and $A= \bigoplus A_k$ a $\mathbb{Z}$-graded algebra. $A$ is called a \emph{differential graded $\mathfrak{k}$-algebra} ($\mathfrak{k}$-dga) or $\mathfrak{k}^*$-algebra, if there exist derivations $d:A \rightarrow A$ of degree 1, $\iota_X:A \rightarrow A$ of degree $-1$, and $\mathcal{L}_XA \rightarrow A$ of degree 0 for every $X \in \mathfrak{k}$ such that $\iota_X$ and $\mathcal{L}_X$ are linear in $X$ and
 \begin{multicols}{2}

\begin{itemize}
 \item $d^2=0$,
\item $\iota_X^2 = 0$,
\item $[\mathcal{L}_X,\mathcal{L}_Y]=\mathcal{L}_{[X,Y]}$,
\item $[\mathcal{L}_X, \iota_Y] = \iota_{[X,Y]}$,
\item $\mathcal{L}_X = d\iota_X + \iota_Xd$.
\item[]
\end{itemize}
\end{multicols}

\end{definition}

\begin{lemma}\label{lem Omega(M,F) g-dga} With the usual differential $d$ inherited from $\Omega(M)$ and $\iota_X := \iota_{X_M}$, $\mathcal{L}_X := \mathcal{L}_{X_M}$, $\Omega(M, \CF)$ is a $\g$-dga. 
\end{lemma}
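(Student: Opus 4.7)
The plan is to verify the two things required: first, that the operators $d$, $\iota_X = \iota_{X_M}$, and $\mathcal{L}_X = \mathcal{L}_{X_M}$ for $X \in \g$ actually preserve the subspace $\Omega(M,\CF) \subset \Omega(M)$; and second, that they satisfy the Cartan-style relations of Definition \ref{def g-dga}. Since $\Omega(M)$ is already a $\g$-dga in the standard sense, once the preservation property is established the relations automatically descend to the subalgebra $\Omega(M,\CF)$, so the real work is confined to the first step.

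For the differential $d$, preservation of $\Omega(M,\CF)$ is immediate from Cartan's magic formula, as already observed above the definition. For $\iota_X$ and $\mathcal{L}_X$ with $X \in \g$, the key input is Equation \eqref{eq fund vf foliate}: for every $Y \in \X(\CF)$ and $X \in \g$, one has $[X_M,Y] \in \X(\CF)$. Given $\omega \in \Omega(M,\CF)$ and $Y \in \X(\CF)$, I would then compute
\begin{align*}
\iota_Y \iota_X \omega &= -\iota_X \iota_Y\omega = 0,\\
\mathcal{L}_Y \iota_X \omega &= \iota_X \mathcal{L}_Y \omega + \iota_{[Y,X_M]}\omega = 0,\\
\iota_Y \mathcal{L}_X \omega &= \mathcal{L}_X \iota_Y \omega - \iota_{[X_M,Y]}\omega = 0,\\
\mathcal{L}_Y \mathcal{L}_X \omega &= \mathcal{L}_X \mathcal{L}_Y \omega + \mathcal{L}_{[Y,X_M]}\omega = 0,
\end{align*}
using that $[X_M,Y] \in \X(\CF)$ to conclude that the bracket contractions/derivations vanish on the basic form $\omega$. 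Thus $\iota_X\omega$ and $\mathcal{L}_X\omega$ are again basic.

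Having shown that $d,\iota_X,\mathcal{L}_X$ all restrict to $\Omega(M,\CF)$, the five Cartan identities in Definition \ref{def g-dga} then hold on this subalgebra because they already hold on $\Omega(M)$: $d^2=0$ and $\iota_{X_M}^2 = 0$ are standard, $\mathcal{L}_X = d\iota_X + \iota_X d$ is Cartan's magic formula, and the bracket identities $[\mathcal{L}_X,\mathcal{L}_Y] = \mathcal{L}_{[X,Y]}$, $[\mathcal{L}_X,\iota_Y] = \iota_{[X,Y]}$ follow from the corresponding identities for vector fields after observing that $[X_M, Y_M] = [X,Y]_M$ (up to the convention on fundamental vector fields); here they become especially transparent since $G$ is a torus and therefore $[X,Y]=0$, so both sides of each bracket identity vanish.

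There is no real obstacle to this argument; the only nontrivial ingredient is Equation \eqref{eq fund vf foliate}, which itself relies on the $G$-invariance of $\alpha$ (forcing $G$ to preserve the Reeb vector field and hence the foliation $\CF$). The lemma is essentially a bookkeeping statement confirming that the basic subcomplex inherits the full $\g$-dga structure from $\Omega(M)$ whenever the $G$-action is by foliation-preserving diffeomorphisms, which is our situation.
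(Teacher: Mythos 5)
Your proposal is correct and follows essentially the same route as the paper's proof: both reduce the problem to showing that $\iota_X$ and $\mathcal{L}_X$ preserve $\Omega(M,\CF)$, use Equation \eqref{eq fund vf foliate} to see that $\iota_{[Y,X_M]}\omega = \mathcal{L}_{[Y,X_M]}\omega = 0$, and note that the Cartan relations are inherited from $\Omega(M)$. You merely write out the two computations the paper dismisses with ``similarly,'' so there is nothing to add.
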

\begin{remark}
          In \cite{GNTequivariant}, Goertsches, Nozawa and T\"oben consider so-called \emph{transverse actions} of Lie algebras on foliated manifold, especially the action of $\mathfrak{t}/\R \xi$ on a $K$-contact manifold. In particular, they show that $\Omega(M,\CF)$ is a $ \mathfrak{t}/\R \xi$-dga, see \cite[Proposition~2,~(3.1)]{GNTequivariant}.
\end{remark}

\begin{proof}
The relations of Definition \ref{def g-dga} as well as the degrees of the derivations are inherited from those on $\Omega(M)$. Let $\omega \in \Omega(M, \CF)$, $X\in \g$, $Y\in \X(\CF)$. For the proof of the $\g$-dga structure, it remains to show that $\iota_X \omega$ and $\mathcal{L}_X \omega$ are again elements of $\Omega(M, \CF)$. Note that we have $\iota_{[Y,X_M]}\omega=\mathcal{L}_{[Y,X_M]}\omega=0$ by Equation \eqref{eq fund vf foliate}. Then
\begin{align*}
 \iota_Y \iota_X \omega & = -\iota_X \iota_Y \omega =0,\\
 \mathcal{L}_Y \iota_X \omega & = \iota_{[Y,X_M]}\omega + \iota_X \mathcal{L}_Y \omega =0,
\end{align*}
and, similarly, we obtain $ \iota_Y \mathcal{L}_X \omega = \mathcal{L}_Y \mathcal{L}_X \omega =0$.
\end{proof}
As a generalization of the example where a Lie group $K$ acting on a manifold induces the structure of a $\mathfrak{k}$-dga on the differential forms, consider the following definition (cf. \cite[Definition~2.3.1]{guillemin1999supersymmetry}). 
\begin{definition}
	Let $\mathfrak{k}$ denote the Lie algebra of an arbitrary Lie group $K$. A $K^*$-algebra is a $\mathfrak{k}$-dga $A$ together with a representation $\rho $ of $K$ as automorphisms of $A$, that is compatible with the derivations in the sense that for all $h \in K, X \in \mathfrak{k}$, it is
	\begin{multicols}{2}
	\allowdisplaybreaks
\begin{itemize}
 \item $\tfrac{d}{dt}\rho(\exp(tX))|_{t=0}=\mathcal{L}_X$,
 \item $\rho(h) \mathcal{L}_X\rho(h^{-1})=\mathcal{L}_{Ad_h X}$,
\item $\rho(h)\iota_X \rho(h^{-1}) = \iota_{Ad_h X}$,
\item $\rho(h) d \rho(h^{-1}) = d$.
\end{itemize}
\end{multicols}

\end{definition}
For a different formulation in the language of superalgebras, the reader is referred to \cite[Section~2.3]{guillemin1999supersymmetry}.
\begin{lemma}\label{lem Omega(M,F) G* alg}
	The torus action of $G$ on $M$ induces an action on $\Omega(M, \CF)$ by pullback, i.e., $g^*\omega \in \Omega(M, \CF)$ for every $g \in G$, $\omega \in \Omega(M, \CF)$, turning $\Omega(M, \CF)$ into a $G^*$-algebra.
\end{lemma}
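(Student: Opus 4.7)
The lemma has two components: first, show that pullback by $g \in G$ preserves the subspace of basic forms; second, verify the four compatibility axioms that upgrade the $\g$-dga structure (Lemma \ref{lem Omega(M,F) g-dga}) to a $G^*$-algebra structure. The crucial ingredient for both is equation (\ref{eq dg reeb=reeb}), which asserts that every $g \in G$ preserves the Reeb vector field. Because $\X(\CF) = C^\infty(M)\cdot \xi$, this immediately implies that pushforward by $g$ sends $\X(\CF)$ into itself: explicitly, for $Y = f \xi \in \X(\CF)$ one computes $g_* Y = (f \circ g^{-1}) \xi \in \X(\CF)$.

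For the first part, my plan is to invoke the standard naturality identities for a diffeomorphism $\phi$, namely
\[
\iota_Y \circ \phi^* = \phi^* \circ \iota_{\phi_* Y}, \qquad \mathcal{L}_Y \circ \phi^* = \phi^* \circ \mathcal{L}_{\phi_* Y},
\]
applied with $\phi = g$ and $Y \in \X(\CF)$. Since $g_* Y \in \X(\CF)$ and $\omega$ is basic, both $\iota_{g_* Y}\omega$ and $\mathcal{L}_{g_* Y}\omega$ vanish, which forces $\iota_Y(g^* \omega) = \mathcal{L}_Y(g^*\omega) = 0$. Hence $g^*\omega \in \Omega(M, \CF)$ and setting $\rho(g) := g^*$ defines a representation of $G$ on $\Omega(M, \CF)$ by algebra automorphisms.

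For the $G^*$-algebra axioms, the plan is to derive them from the same naturality combined with the elementary fact $g_* X_M = (\mathrm{Ad}_g X)_M$ for fundamental vector fields. The condition $\rho(h) d \rho(h^{-1}) = d$ is just the naturality of the exterior derivative. The identities $\rho(h)\iota_X\rho(h^{-1}) = \iota_{\mathrm{Ad}_h X}$ and $\rho(h)\mathcal{L}_X\rho(h^{-1}) = \mathcal{L}_{\mathrm{Ad}_h X}$ follow directly from the naturality formulas above applied to $\phi = h^{-1}$, using $(h^{-1})_* X_M = (\mathrm{Ad}_{h^{-1}} X)_M$. Finally, the identity $\tfrac{d}{dt}\big|_{t=0} \rho(\exp(tX)) = \mathcal{L}_X$ is the standard definition of the Lie derivative of a form as the infinitesimal generator of pullback along a flow, applied here to the flow of $X_M$.

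There is no substantive obstacle: once equation (\ref{eq dg reeb=reeb}) secures $G$-invariance of the foliation $\CF$, the whole lemma reduces to bookkeeping with the naturality of $d$, $\iota$, and $\mathcal{L}$ under diffeomorphisms. The only point deserving a moment of care is checking that the pushforward of $f\xi$ remains in $\X(\CF)$ and not merely tangent to $\CF$ pointwise, which is immediate from $dg \cdot \xi = \xi \circ g$.
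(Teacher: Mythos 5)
Your proposal is correct and follows essentially the same route as the paper: both hinge on Equation \eqref{eq dg reeb=reeb} to show that pushforward by $g$ preserves $\X(\CF)$, then deduce $\iota_Y g^*\omega = g^*\iota_{g_*Y}\omega = 0$ and conclude that the $G^*$-algebra compatibility axioms are inherited from $\Omega(M)$. The only cosmetic difference is that you invoke naturality of $\mathcal{L}$ under pullback directly, whereas the paper derives $\mathcal{L}_Y g^*\omega = 0$ via Cartan's formula together with the differential closedness of $\Omega(M,\CF)$; both are fine.
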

 
\begin{proof}
 Let $g \in G$, $\omega \in \Omega(M, \CF)$, $Y\in \X(\CF)$. By Equation \eqref{eq dg reeb=reeb}, the vector field $dg(Y)$, defined by $dg(Y)(p)=dg_{g^{-1}p}(Y_{g^{-1}p})$, lies in $\X(\CF)$, and, since $\Omega(M, \CF)$ is differentially closed, we have $d\omega\in \Omega(M, \CF)$. Hence, we obtain
\begin{align*}
 \iota_Y g^* \omega &= g^* \iota_{dg Y} \omega=0,\\
\mathcal{L}_Y g^* \omega &= d\iota_Y g^* \omega + \iota_Y d g^* \omega =0 + \iota_Y g^* d \omega =0.
\end{align*}
The compatibility relations are  inherited from $\Omega(M)$.
\end{proof}

Note that since we are considering an abelian group, the fundamental vector fields satisfy $dg(X_M(p))=X_M(g\cdot p)$, for every $X \in \g$, $g \in G$, $p \in M$. Therefore, we obtain by an easy calculation, that, if $\omega \in \Omega(M,\CF)$ is $G$-invariant, then so are $\iota_X\omega$ and $\mathcal{L}_X\omega$ for every $X \in \g$.

\subsection{Contact Moment Map}\label{subsec cont mom map}
Recall that $(M,\alpha)$ is a connected, compact contact manifold with Reeb vector field $\xi$, on which a torus $G$ acts in such a way that it preserves the contact form $\alpha$. 
The \emph{contact moment map} on $M$ is the map $\Psi\colon M \rightarrow \g^*$, defined by $ \Psi^X := \Psi(\cdot)(X) := \iota_{X_M} \alpha$ for every $X \in \g$.
\begin{remark}\label{rem not non degenerate}
	$\Psi$ is an abstract moment map according to the definition in \cite{ggk2002moment}: $G$-invariance (i.e., $G$-equi\-va\-ri\-ance) stems from the $G$-invariance of $\alpha$ and for every closed subgroup $H \subset G$, the map $\Psi^H := \operatorname{pr}_{\mathfrak{h}^*}\circ \Psi \colon M \rightarrow \mathfrak{h}^*$ is zero on the points fixed by the $H$-action, $M^H$, thus it is in particular constant on the connected components of $M^H$. 
	In general, however, this map is not a \emph{non-degenerate} abstract moment map, again as defined in \cite{ggk2002moment}, since, in general, the inclusion $\{X_M = 0 \} \subset \mathit{Crit}(\Psi^X)$ is not an equality, see Equation \eqref{eq crit psi x} below. 
\end{remark}

By Cartan's formula, $d \iota_{X_M} \alpha = - \iota_{X_M} d \alpha$ for every $X \in \g$ since $\mathcal{L}_{X_M} \alpha = 0$. Furthermore, $ \ker d\alpha_x = \R \xi(x)$. This implies that the critical set of the $X$-component of $\Psi$ is given by
\begin{align}
 \mathit{Crit}(\Psi^X) & = \{ x \in M \mid X_M(x) \in \R \xi(x)\} = \{ x \in M \mid X \in \widetilde \g_x\}. \label{eq crit psi x}
\end{align}

Since $\alpha(\xi) \equiv 1$ and $(\Psi^X)^{-1}(0) = \{x \in M \mid \alpha_x(X_M(x)) = 0\}$, Equation \eqref{eq crit psi x} implies 
\begin{align}
 \mathit{Crit}(\Psi^X) \cap (\Psi^X)^{-1}(0) = \{x \in M \mid X_M (x)= 0 \}. \label{eq crit psiX cap psiX-1 0}
\end{align}
\begin{lemma}\label{lem MG=varnothing}
 Suppose that $0$ is a regular value of $\Psi$. Then $M$ has no $G$-fixed points, $M^G=\varnothing$.
\end{lemma}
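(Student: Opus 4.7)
The plan is to show directly that any $G$-fixed point would force $0$ to be a critical value of $\Psi$, contradicting the hypothesis.

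Assume for contradiction that $x \in M^G$. By definition of being a fixed point, every fundamental vector field vanishes at $x$: $X_M(x) = 0$ for all $X \in \g$. First, evaluate the moment map: $\Psi^X(x) = \alpha_x(X_M(x)) = 0$ for every $X \in \g$, so $\Psi(x) = 0$.

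Next I would compute the differential $d\Psi_x \colon T_x M \to \g^*$. For fixed $X \in \g$, since $G$ preserves $\alpha$ we have $\mathcal{L}_{X_M}\alpha = 0$, and Cartan's formula gives
\[
d\Psi^X = d\iota_{X_M}\alpha = \mathcal{L}_{X_M}\alpha - \iota_{X_M} d\alpha = -\iota_{X_M} d\alpha.
\]
Evaluating at $x$ and using $X_M(x) = 0$, we get $d\Psi^X|_x = 0$ for every $X \in \g$. Since the $X$-component of $d\Psi_x$ vanishes for all $X$, the whole differential $d\Psi_x$ is the zero map, which is not surjective (assuming $G$ is nontrivial, which is implicit since otherwise the statement is vacuous). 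Hence $x$ is a critical point of $\Psi$ with critical value $\Psi(x) = 0$, contradicting the assumption that $0$ is a regular value of $\Psi$.

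There is no real obstacle here; the proof is an immediate consequence of Cartan's formula together with the vanishing of fundamental vector fields at fixed points. The only mildly subtle point is recognizing that this simultaneously uses $\Psi(x) = 0$ (so that $0$ is the relevant value to test) and $d\Psi_x = 0$ (so that $x$ is critical), both of which follow from the single observation that $X_M(x)=0$ for all $X \in \g$.
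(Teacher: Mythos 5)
Your proof is correct and follows essentially the same route as the paper: the paper simply cites its Equation \eqref{eq crit psiX cap psiX-1 0}, which packages exactly the two facts you re-derive (that $X_M(x)=0$ forces both $\Psi^X(x)=0$ and $x \in \mathit{Crit}(\Psi^X)$ via $d\Psi^X = -\iota_{X_M}d\alpha$). No issues.
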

\begin{proof}
 Since all fundamental vector fields vanish on $M^G$, the claim is a consequence of Equation \eqref{eq crit psiX cap psiX-1 0}.
\end{proof}
In analogy to the symplectic setting (cf., e.g., \cite[23.2.1]{cannas2001lectures}), we have the following.
\begin{lemma}\label{lem im dPsi is annihilator}
	Denote the annihilator of $\widetilde \g_x$ in $\g^*$ by $\widetilde \g_x^0$. The image of $d\Psi_x$ is exactly $\widetilde \g_x^0$.
\end{lemma}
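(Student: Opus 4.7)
The plan is to compute the image of $d\Psi_x$ explicitly from the definition $\Psi^X = \iota_{X_M}\alpha$ and compare dimensions. First I would note that since the $G$-action preserves $\alpha$, Cartan's formula gives $d\Psi^X = d(\iota_{X_M}\alpha) = -\iota_{X_M}d\alpha$, so that
\[ d\Psi_x(v)(X) = -d\alpha_x(X_M(x),v) \qquad \text{for all } v \in T_xM, \ X \in \g. \]

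The inclusion $\mathrm{im}(d\Psi_x) \subset \widetilde \g_x^0$ is then immediate: if $X \in \widetilde \g_x$, then $X_M(x) = c\,\xi(x)$ for some $c \in \R$, and the identity $\iota_\xi d\alpha = 0$ forces $d\Psi_x(v)(X) = -c\,d\alpha_x(\xi(x),v) = 0$ for every $v$.

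For equality I would argue by dimension count. Using the splitting $T_xM = \ker\alpha_x \oplus \R\xi(x)$ and letting $\pi \colon T_xM \to \ker\alpha_x$ denote the projection along $\R\xi(x)$, the identity $\iota_\xi d\alpha = 0$ implies $d\alpha_x(X_M(x),v) = d\alpha_x(\pi X_M(x), \pi v)$. Therefore
\[ \ker d\Psi_x = \pi^{-1}\bigl(\widetilde{V}^{\perp_{d\alpha}}\bigr), \qquad \widetilde{V} := \{\pi X_M(x) \mid X \in \g\} \subset \ker\alpha_x, \]
where the symplectic orthogonal is taken with respect to the symplectic form $d\alpha_x|_{\ker\alpha_x}$. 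Since that form is nondegenerate, $\dim \widetilde{V}^{\perp_{d\alpha}} = 2n - \dim \widetilde{V}$, and hence $\dim \ker d\Psi_x = 2n - \dim \widetilde{V} + 1$.

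Finally I would identify $\dim \widetilde{V}$ with $\dim \widetilde \g_x^0$. The linear map $\g \to \ker\alpha_x$, $X \mapsto \pi X_M(x)$, has image $\widetilde V$ and kernel exactly $\{X \in \g \mid X_M(x) \in \R\xi(x)\} = \widetilde \g_x$, so $\dim \widetilde V = \dim \g - \dim \widetilde \g_x = \dim \widetilde \g_x^0$. Combining,
\[ \dim \mathrm{im}(d\Psi_x) = (2n+1) - \dim \ker d\Psi_x = \dim \widetilde{V} = \dim \widetilde \g_x^0, \]
and together with the inclusion from the first step this yields the desired equality. The only subtlety to watch is keeping the roles of $\ker\alpha$ and $\R\xi$ straight when reducing to the symplectic orthogonal, since both $M_\xi$ and $M_\varnothing$ have to be treated by the same formula; this works uniformly because $\pi$ sends $\xi$ to $0$, absorbing the case distinction built into the definition of $\widetilde \g_x$.
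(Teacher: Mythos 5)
Your argument is correct, but it takes a more hands-on route than the paper. The paper's proof is a two-line duality argument: the image of the linear map $d\Psi_x \colon T_xM \to \g^*$ is the annihilator of the kernel of its transpose $d\Psi_x^t \colon \g \to T_x^*M$, $X \mapsto d\Psi^X_x$, and that kernel is $\widetilde\g_x$ by Equation \eqref{eq crit psi x} — which is itself the statement that $d\Psi^X_x = -\iota_{X_M(x)}d\alpha_x$ vanishes exactly when $X_M(x) \in \ker d\alpha_x = \R\xi(x)$. Your proof unwinds this: the inclusion $\mathrm{im}(d\Psi_x) \subset \widetilde\g_x^0$ is the same computation as identifying $\ker d\Psi_x^t$, and your dimension count through the splitting $T_xM = \ker\alpha_x \oplus \R\xi(x)$ and the symplectic orthogonal of $\widetilde V = \pi(\{X_M(x) \mid X \in \g\})$ re-derives the rank--nullity bookkeeping that the transpose/annihilator fact packages for free. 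The trade-off: the paper's version is shorter and never mentions the symplectic structure on $\ker\alpha_x$ beyond what is already encoded in Equation \eqref{eq crit psi x}, while yours makes explicit exactly where the nondegeneracy of $d\alpha_x|_{\ker\alpha_x}$ enters and why the two cases $x \in M_\xi$ and $x \in M_\varnothing$ need no separate treatment. Both are complete; each step of yours (in particular $\ker d\Psi_x = \pi^{-1}(\widetilde V^{\perp_{d\alpha}})$ and $\dim\widetilde V = \dim\g - \dim\widetilde\g_x$) checks out.
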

\begin{proof}
	The image of the linear map $d\Psi_x$ is the annihilator of the kernel of its transpose. By Equation \eqref{eq crit psi x}, the kernel of $d\Psi_x^t$ is $\widetilde \g_x$.
\end{proof}

 The next proposition and the resulting Proposition \ref{prop Hess formula} are crucial to the proof of our main result. They are inspired by the idea of the proof of Theorem G.13 in \cite{ggk2002moment} and a corrected version thereof in \cite[Proposition~3.12,~Appendix~B]{baird2010topology}. However, \cite[Proposition~3.12]{baird2010topology} requires a non-degenerate abstract moment map and a $G$-invariant almost complex structure. Hence, while providing an alternative proof of Kirwan surjectivity on symplectic manifolds, it does not hold in our case.
 
\begin{proposition}\label{prop basis K-contact}
 Let $(M, \alpha)$ be a compact $K$-contact manifold and $\xi$ its Reeb vector field. Let $G$ be a torus that acts on $M$, preserving $\alpha$. Denote with $\Psi \colon M \rightarrow \g^*$ the contact moment map and suppose that $0$ is a regular value of $\Psi$. Then there exists a basis $(X_1, ..., X_r)$ of $\g$ such that for every $s = 1, ..., r$
\begin{enumerate}[(i)]
 \item $0 \in \R^s$ is a regular value of $f_s := (\Psi^{X_1}, ..., \Psi^{X_s}) \colon M \rightarrow \R^s$,
 \item $\{x \in M \mid (X_s)_M(x) = 0\} = \varnothing$,
 \item For all $\g_x$ of dimension at most $ r-s$, the following holds: \vspace{-3pt}\[\g_x \cap \bigoplus_{j=1}^{s}\R X_j  = \{0\}, \]
 \item For all $\widetilde{\g}_x $ of dimension at most $ r-s$, the following holds: \vspace{-3pt}
\[\widetilde{\g}_x \cap \bigoplus_{j=1}^{s}\R X_j  = \{0\},\]
 \item  The critical points $C_{s}$ of $f_{s}$ are  \begin{align*} C_{s}&=\{x \in M \mid \widetilde{\mathfrak{g}}_x \cap \oplus_{j=1}^{s} \ \R X_j\neq \{0\} \}=\{x \in M \mid \dim\widetilde{ \mathfrak{g}}_x >r-s \}.\\
  \intertext{In particular, with $C_0:= \varnothing$,}
  C_{s}&=C_{s-1} \ \dot \cup \ \{x \in M \mid \dim \widetilde{\mathfrak{g}}_x = r-s+1 \}.\end{align*}
 \end{enumerate}
\end{proposition}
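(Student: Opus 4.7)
The plan is to construct the basis inductively on $s$, at each step choosing $X_s$ from the complement of a finite union of proper linear subspaces of $\g$, which is open and dense. For $s=0$ set $C_0 := \varnothing$ and all clauses are vacuous. For the inductive step, assume $X_1,\dots,X_{s-1}$ have been selected so that (i)--(v) hold at level $s-1$. Throughout I rely on the two structural facts already recorded in the excerpt: compactness of $M$ gives only finitely many distinct $\g_x$ and $\widetilde{\g}_x$, and Lemma \ref{lem MG=varnothing} gives $\g_x \subsetneq \g$ for every $x$.

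I first recast (ii), (iii), (iv) as subspace-avoidance conditions on $X_s$. Clause (ii) requires $X_s \notin \bigcup_x \g_x$, a finite union of proper subspaces. Clause (iii) for $\dim \g_x \leq r-s$ reduces, using the inductive (iii), to the single condition $X_s \notin \g_x + \bigoplus_{j<s}\R X_j$; this is a subspace of dimension at most $(r-s)+(s-1) = r-1$, hence proper. Clause (iv) is analogous. Granted (iv), both characterisations of (v) drop out by linear algebra: applying \eqref{eq crit psi x} to the linear combinations $\sum c_j X_j$ identifies $C_s$ with $\{x : \widetilde{\g}_x \cap \bigoplus_{j=1}^s \R X_j \neq \{0\}\}$; the inclusion $\{x : \dim\widetilde{\g}_x > r-s\} \subseteq C_s$ is a pigeonhole count on dimensions, and the reverse is exactly (iv). The disjoint decomposition $C_s = C_{s-1}\,\dot\cup\,\{x:\dim\widetilde{\g}_x = r-s+1\}$ is then immediate.

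The delicate clause is (i). Writing $f_s = (f_{s-1}, \Psi^{X_s})$ and invoking inductive (i), the only points of $f_s^{-1}(0)$ that could be critical for $f_s$ are those $x$ with $\dim\widetilde{\g}_x = r-s+1$ and $f_{s-1}(x) = 0$. I split on $x \in M_\varnothing$ versus $x \in M_\xi$. In the $M_\varnothing$ case $\widetilde{\g}_x = \g_x$, so $\Psi^X(x) = 0$ for every $X \in \g_x$; combined with $f_{s-1}(x) = 0$ and inductive (iii) (which forces $\g_x \cap \bigoplus_{j<s}\R X_j = \{0\}$), the functional $\Psi(x) \in \g^*$ vanishes on an $r$-dimensional subspace, hence $\Psi(x) = 0$. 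Lemma \ref{lem im dPsi is annihilator} and the regularity of $0$ now give $\widetilde{\g}_x = \{0\}$, contradicting $\dim\widetilde{\g}_x = r-s+1 > 0$; this case is empty. In the $M_\xi$ case, $\widetilde{\g}_x = \g_x \oplus \R X^x$ and by inductive (iv) the sum $\g = \widetilde{\g}_x \oplus \bigoplus_{j<s}\R X_j$ is direct. Decomposing $X_s = a + b X^x + \sum_{j<s} c_j X_j$ with $a \in \g_x$, a direct computation with $\alpha(\xi) = 1$ and $f_{s-1}(x) = 0$ yields $\Psi^{X_s}(x) = b$. Nonvanishing thus amounts to $X_s \notin \g_x + \bigoplus_{j<s}\R X_j$, again a proper subspace of dimension $r-1$; this adds only finitely many further avoidance conditions.

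All the conditions I have collected are finite unions of proper linear subspaces of $\g$, so their common complement is open and dense; picking $X_s$ in it completes the induction. The main obstacle is the $M_\varnothing$ subcase of (i): inductive regularity of $f_{s-1}$ alone is insufficient to exclude the offending points, and one has to bring in the global hypothesis that $0$ is a regular value of $\Psi$ via Lemma \ref{lem im dPsi is annihilator}. The remainder of the argument is bookkeeping with genericity and elementary linear algebra.
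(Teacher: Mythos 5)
Your proof is correct and follows essentially the same route as the paper's: an inductive genericity argument choosing each $X_s$ outside a finite union of proper subspaces, with the two key points --- excluding would-be critical points in $M_\varnothing$ via the regularity of $0$ for the full moment map together with Lemma \ref{lem im dPsi is annihilator} (the paper isolates this as Lemma \ref{lem MGi cap Ys =varnothing}), and excluding those in $M_\xi$ via the computation $\Psi^{X_s}(x)=b$ using $\alpha(\xi)=1$ --- matching the paper's treatment exactly. The only difference is organisational: the paper packages the avoidance conditions into explicit open dense sets $\mathfrak{a}_0\cap\mathfrak{a}_{s_0}$ and a separate lemma, whereas you fold them into a single list of subspace-avoidance conditions.
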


\begin{remark}
	We remark that a basis with properties \textit{(i)-(iii)} of Proposition \ref{prop basis K-contact} exists on a contact manifold that is not necessarily $K$-contact, the proof is similar.
\end{remark}

\begin{remark}
 Note that, together with Equation \eqref{eq crit psiX cap psiX-1 0}, \textit{(ii)} implies that \linebreak $\mathit{Crit}(\Psi^{X_s}) \cap (\Psi^{X_s})^{-1}(0)$ is empty.
\end{remark}

\begin{proof}[Proof of Proposition \ref{prop basis K-contact}]
 Recall that there are only finitely many $\g_x$ and $\widetilde \g_x$ and that $\g$ does not occur as isotropy algebra (by Lemma \ref{lem MG=varnothing}). Set $\mathfrak{k}= \cup \g_x \cup \cup_{\widetilde{\mathfrak{g}}_x\neq \g}\widetilde{\mathfrak{g}}_x$ and denote its complement by $\mathfrak{a}_0 = \g \setminus \mathfrak{k}$; as complement of finitely many proper subspaces, $\mathfrak{a}_0$ is open and dense. 
 With Equation \eqref{eq crit psiX cap psiX-1 0}, it follows that \textit{(i)-(v)} hold for $s=1$ with an arbitrary $X_1 \in \mathfrak{a}_0$.

Now, let us suppose we already found $X_1, ..., X_{s_0}$ such that \textit{(i)} - \textit{(v)} hold for $s=1,..., s_0$; we will construct $X_{s_0+1}$. Set $W_{s_0}= \oplus_{j=1}^{s_0} \R X_j$. 
The following set is open and dense in $\g$:
\begin{align*}
	\mathfrak{a}_{s_0} &:= \g \setminus \left( \bigcup_{\{x\in M \mid \dim \g_x <r- s_0\}}\hspace{-20pt}\left( \g_x \oplus W_{s_0}\right)\quad \cup  \bigcup_{\{x\in M \mid \dim \widetilde\g_x <r- s_0\}}\hspace{-20pt}\left( \widetilde \g_x \oplus W_{s_0}\right) \right)
\end{align*}
i.e., $\mathfrak{a}_{s_0}$ consists of those $X_{{s_0}+1}$ s.t. \textit{(iii)} and \textit{(iv)} hold for $s=s_0+1$.
 Any $X_{{s_0}+1} \in \mathfrak{a}_0 \cap \mathfrak{a}_{s_0} \neq \varnothing$ will then obviously satisfy \textit{(ii)-(iv)}. To show that the remaining properties are satisfied as well, we need
\begin{lemma}\label{lem MGi cap Ys =varnothing}
 Set $M^{\g_p}= \{x \in M \mid \g_p \subset \g_x\}$ and $Y_s := f_s^{-1}(0)$. For every $\g_p$ of dimension $r-s>0$
 , it holds that\vspace{-10pt}
\begin{align}
 M^{\g_p} \cap Y_s = \varnothing \label{eq MGi cap Ys = varnothing}.
\end{align}
\end{lemma}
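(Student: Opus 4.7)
The plan is a short contradiction argument: assume there exists $x \in M^{\g_p} \cap Y_s$ and show it would force $\Psi(x) = 0$ together with $\widetilde\g_x \neq 0$, which contradicts the standing hypothesis that $0$ is a regular value of $\Psi$.

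First I would exploit the definition of $M^{\g_p}$: since $\g_p \subset \g_x$, the fundamental vector field $X_M$ vanishes at $x$ for every $X \in \g_p$, so $\Psi^X(x) = \alpha_x(X_M(x)) = 0$ and hence $\Psi(x) \in \g_p^0$. On the other hand, the condition $x \in Y_s = f_s^{-1}(0)$ literally says $\Psi(x)(X_j) = 0$ for $j=1,\dots,s$, i.e.\ $\Psi(x) \in W_s^0$, where $W_s = \bigoplus_{j=1}^s \R X_j$. Then I would invoke property \textit{(iii)} of the proposition, which has already been established for the current value of $s$ (the inductive hypothesis): because $\dim \g_p = r-s$ is within the allowed range, it yields $\g_p \cap W_s = \{0\}$, and combined with $\dim \g_p + \dim W_s = r = \dim \g$ this gives the direct sum decomposition $\g = \g_p \oplus W_s$. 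Dualising, $\g_p^0 \cap W_s^0 = (\g_p + W_s)^0 = \{0\}$, so $\Psi(x) = 0$.

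Finally, by Lemma \ref{lem im dPsi is annihilator}, $\operatorname{im}(d\Psi_x) = \widetilde\g_x^0$. Since $0$ is a regular value and $x \in \Psi^{-1}(0)$, the differential $d\Psi_x$ must be surjective, forcing $\widetilde\g_x = \{0\}$. But $\g_p \subset \g_x \subset \widetilde\g_x$ and $\dim \g_p = r - s > 0$, a contradiction. I do not foresee a genuine obstacle: the whole argument reduces to a dual-space manipulation producing $\Psi(x)=0$, after which Lemma \ref{lem im dPsi is annihilator} closes it. The only point requiring care is that property \textit{(iii)} is applied in the borderline case $\dim \g_p = r-s$, which is legitimate because the bound in \textit{(iii)} is ``at most $r-s$''.
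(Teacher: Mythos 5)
Your argument is correct and is essentially the paper's own proof: both use \textit{(iii)} to get $\g = \g_p \oplus W_s$, deduce $\Psi(x)=0$ from $\Psi(x)$ annihilating both summands, and then invoke Lemma \ref{lem im dPsi is annihilator} to see that a point with $\widetilde\g_x \supseteq \g_p \neq \{0\}$ cannot be a regular point of $\Psi$. The only difference is cosmetic (annihilators versus kernels of the functional $\Psi(x)$).
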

\begin{proof}
Let $x \in M^{\g_p} \cap Y_s$. By \textit{(iii)}, $\g_p$ and $\oplus_{j=1}^s \R X_j$ span all of $\g$ since their intersection is zero. We have $\oplus_{j=1}^s \R X_j \subset \ker \Psi(x)$ by the definition of $Y_s$ and $\g_p\subset \ker \Psi(x)$ because $\Psi(M^{\g_p})$ lies in the annihilator of $\g_p$. Thus $\Psi(x)=0$. 
But Lemma \ref{lem im dPsi is annihilator} implies that $M^{\g_p}$ cannot contain a regular point of $\Psi$, hence, $0 \notin \Psi(M^{\g_p})$ since 0 is a regular value of $\Psi$.
\end{proof}
Let us return to the proof of Proposition \ref{prop basis K-contact}. 
We can view $f_{s_0+1}$ as the composition of $\Psi$ and the restriction from $\g$ to $W_{s_0+1}:= \oplus_{j=1}^{s_0+1} \R X_j$. By Lemma \ref{lem im dPsi is annihilator}, the image of $d\Psi_x$ is $\widetilde \g_x^0$. Composing with the restriction yields that $(df_{s_0+1})_x$ is surjective if and only if $\widetilde \g_x \cap W_{s_0+1}=\{0\}$. Thus, we have
\begin{align*}
C_{s_0+1}=   C_{s_0} &\dot{~\cup~}  \left\lbrace x \in M \mid \widetilde{\mathfrak{g}}_x \cap W_{s_0} = \{0\}, \ \widetilde{\mathfrak{g}}_x \cap W_{s_0+1} \neq \{0\} \right\rbrace. \numberthis \label{eq crit fs+1 tx}
\end{align*}
Since we chose $X_{s_0+1}\in \mathfrak{a}_0 \cap \mathfrak{a}_{s_0}$, we directly obtain the remaining statement of \textit{(v)} for $s=s_0+1$, in particular, with $M_\varnothing$ and $M_\xi$ from page \pageref{def M xi}:
  \begin{align}
  C_{{s_0}+1}
   =C_{{s_0}} & \dot{~\cup~}  \underbrace{\{x \in M_\varnothing \mid \dim \g_x = r-{s_0}
   \}   }_{=:A_1} \ \dot{\cup} \ \underbrace{\{x \in M_\xi \mid \dim \widetilde{\mathfrak{g}}_x =r-s
  \} }_{=:A_2} \! .\label{eq crit fs+1 g1}
  \end{align}
It remains to show that \textit{(i)} holds for $s=s_0+1$. By assumption, 0 is a regular value of $f_{s_0}$, thus $C_{{s_0}}\cap Y_{s_0} = \varnothing$. Lemma \ref{lem MGi cap Ys =varnothing} yields that $A_1\cap Y_{s_0} = \varnothing$. Now, consider an element $x \in A_2 \cap Y_{s_0}$. Then $\widetilde{\mathfrak{g}}_x \cap W_{{s_0}} = \{0\}$, $\widetilde{\mathfrak{g}}_x \cap W_{{s_0}+1} \neq \{0\}$.  It follows that $X_{s_0+1}\in \widetilde \g_x \oplus W_{s_0}$. For every $X\in W_{s_0}$, $\Psi^X(x)=0$. Suppose $\Psi^{X_{s_0+1}}(x)=0$. Then, by definition of $\Psi$ and since $\alpha(\xi)=1$, it would follow that $X_{s_0+1}\in  \g_x \oplus W_{s_0}$. But this contradicts $X_{s_0+1}\in \mathfrak{a}_0 \cap \mathfrak{a}_{s_0}$. We showed that $0 \notin \Psi^{X_{{s_0}+1}}(C_{{s_0}+1}\cap Y_{s_0})$, meaning that  \textit{(i)} is satisfied for $s=s_0+1$. 
Hence, we showed that with any choice of $X_{s_0+1} \in \mathfrak{a}_0 \cap \mathfrak{a}^{s_0}   \neq \varnothing$, \textit{(i)} - \textit{(v)} hold for $s=s_0+1$.
\end{proof}

Recall that we set $f_s := (\Psi^{X_1}, ..., \Psi^{X_s}) \colon M \rightarrow \R^s$ and $Y_s := f_s^{-1}(0)$. 
\begin{lemma}\label{lem g(tilde) cap Ws =0 for x in Ys - dim g(tilde)} 
	With $(X_s)$ as in Proposition \ref{prop basis K-contact}, we have for every $x \in Y_s$ 
	\begin{equation}
 \{0\} = \widetilde{\g}_x \cap \oplus_{j=1}^{s} \R X_j. \label {eq lemma}
\end{equation}
In particular, $\dim \widetilde \g_x \leq r-s$ and $\dim \g_x < r-s$.
\end{lemma}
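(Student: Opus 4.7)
The plan is to read \eqref{eq lemma} directly off properties \textit{(i)} and \textit{(v)} of Proposition~\ref{prop basis K-contact}, and then derive both dimension estimates from it, with the strict one for $\g_x$ requiring a case split between $M_\xi$ and $M_\varnothing$ plus an appeal to Lemma~\ref{lem MGi cap Ys =varnothing} in the harder case.

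First, since $Y_s = f_s^{-1}(0)$ and $0$ is a regular value of $f_s$ by \textit{(i)}, no point of $Y_s$ is critical for $f_s$, i.e.\ $Y_s \cap C_s = \varnothing$. By \textit{(v)} the critical set $C_s$ is exactly $\{x \in M \mid \widetilde{\g}_x \cap \bigoplus_{j=1}^s \R X_j \neq \{0\}\}$, so for $x \in Y_s$ we must have $\widetilde{\g}_x \cap \bigoplus_{j=1}^s \R X_j = \{0\}$, which is \eqref{eq lemma}. The bound $\dim \widetilde \g_x \leq r-s$ is then immediate: the sum $\widetilde \g_x + W_s$ (where $W_s := \bigoplus_{j=1}^s \R X_j$) is direct, has dimension $\dim \widetilde \g_x + s$, and sits inside the $r$-dimensional space $\g$.

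For the strict inequality $\dim \g_x < r-s$, I would split on whether $x$ lies in $M_\xi$ or $M_\varnothing$. If $x \in M_\xi$, then $\widetilde \g_x = \g_x \oplus \R X^x$ and the previous bound gives $\dim \g_x = \dim \widetilde \g_x - 1 \leq r-s-1 < r-s$ directly. If instead $x \in M_\varnothing$, then $\widetilde \g_x = \g_x$, and I would argue by contradiction: assume $\dim \g_x = r-s$. Combined with \eqref{eq lemma} and a dimension count, this forces $\g = \g_x \oplus W_s$. On the one hand $\Psi^X(x) = \alpha_x(X_M(x)) = 0$ for $X \in \g_x$ since $X_M(x) = 0$, and on the other $\Psi^{X_j}(x) = 0$ for $j \leq s$ since $x \in Y_s$; together these give $\Psi(x) = 0$. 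But $x \in M^{\g_x}$ trivially, so Lemma~\ref{lem MGi cap Ys =varnothing} (applicable once $r-s > 0$, which is the only case where the desired strict inequality can possibly hold) yields $M^{\g_x} \cap Y_s = \varnothing$, contradicting $x \in Y_s$.

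The whole argument is essentially bookkeeping on top of Proposition~\ref{prop basis K-contact}; I do not anticipate a real obstacle. The only subtlety worth flagging is the $M_\varnothing$ subcase, where one has to notice that \eqref{eq lemma} alone gives only $\dim \g_x \leq r-s$ and the strict inequality genuinely requires invoking Lemma~\ref{lem MGi cap Ys =varnothing} together with the regular-value hypothesis on $\Psi$ itself.
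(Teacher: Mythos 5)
Your proof is correct and follows essentially the same route as the paper's: Equation \eqref{eq lemma} is read off from properties \textit{(i)} and \textit{(v)} of Proposition~\ref{prop basis K-contact}, the bound $\dim\widetilde{\g}_x\leq r-s$ comes from the direct-sum dimension count, and Lemma~\ref{lem MGi cap Ys =varnothing} rules out $\dim\g_x=r-s$. Your case split between $M_\xi$ and $M_\varnothing$ (and the re-derivation of $\Psi(x)=0$ inside the $M_\varnothing$ case) is harmless but unnecessary, since Lemma~\ref{lem MGi cap Ys =varnothing} excludes points of $Y_s$ with isotropy of dimension $r-s$ uniformly, which is how the paper handles both cases at once.
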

\begin{proof}
	Equation \eqref{eq lemma} follows directly from Proposition \ref{prop basis K-contact}, by combining  \textit{(i)},  \textit{(iv)} and \textit{(v)}. It directly implies that  $\dim \widetilde \g_x \leq r-s$. Since $\dim \g_x \leq \dim \widetilde \g_x$, and $Y_s$ does not contain a point with isotropy of dimension $r-s$ by Lemma \ref{lem MGi cap Ys =varnothing}, it follows that $\dim \g_x < r-s$.
\end{proof}

A main aspect needed for the proof of our main Theorem will be the Morse-Bott property of the functions $\Psi^{X_{s+1}}|_{Y_s}$. As a first step, we now want to compute their critical sets $\mathit{Crit}(\Psi^{X_{s+1}}|_{Y_s})$. Recall that $T$ denotes the closure of the flow of the Reeb vector field $\xi$ in the isometry group of $(M,g)$, where $g$ is any contact metric, and that $T$ is independent of the choice of $g$.
\begin{lemma}\label{lem crit= min Gpsi (GT)-orbits}
	With $(X_s)$ as in Proposition \ref{prop basis K-contact}, $\mathit{Crit}(\Psi^{X_{s+1}}|_{Y_s}) $ is the union of all the minimal $G\times \{\psi_t\}$-orbits, i.e., of all $G\times \{\psi_t\}$-orbits of dimension $s+1$. They coincide with the minimal $G\times T$-orbits. These are exactly the points of $Y_s$ with generalized isotropy algebra of dimension $r-s$. In particular, $\mathit{Crit}(\Psi^{X_{1}})=\mathit{Crit}(\Psi) $ consists of all points with $\widetilde \g_x =\g$.
\end{lemma}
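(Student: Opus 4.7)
The plan is to identify $\mathit{Crit}(\Psi^{X_{s+1}}|_{Y_s})$ with $C_{s+1}\cap Y_s$ via the regularity of $f_s$, read off the $G\times\{\psi_t\}$-orbit dimensions from $\dim\widetilde\g_x$, and then bridge the gap to $G\times T$-orbits by showing that the Reeb flow through a minimal-orbit point never leaves the $G$-orbit.

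For the first step, Proposition \ref{prop basis K-contact}(i) makes $Y_s$ a submanifold with $T_xY_s=\ker(df_s)_x$, so $x\in Y_s$ is critical for $\Psi^{X_{s+1}}|_{Y_s}$ precisely when $(d\Psi^{X_{s+1}})_x$ lies in the span of $(d\Psi^{X_j})_x$, $j\le s$, i.e., when $x\in C_{s+1}$. Since $C_s\cap Y_s=\varnothing$ by the same regularity, Proposition \ref{prop basis K-contact}(v) yields
\[
\mathit{Crit}(\Psi^{X_{s+1}}|_{Y_s})=\{x\in Y_s\mid\dim\widetilde\g_x=r-s\},
\]
which by Lemma \ref{lem g(tilde) cap Ws =0 for x in Ys - dim g(tilde)} are exactly the points of $Y_s$ where $\dim\widetilde\g_x$ is maximal. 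Next I would compute $T_x((G\times\{\psi_t\})\cdot x)=T_x(G\cdot x)+\R\xi(x)$ by distinguishing $x\in M_\xi$ (where $\xi(x)\in T_x(G\cdot x)$ and $\dim\widetilde\g_x=\dim\g_x+1$) from $x\in M_\varnothing$ (where $\widetilde\g_x=\g_x$ and $\xi(x)$ contributes an extra dimension); both cases give $\dim(G\times\{\psi_t\})\cdot x=r-\dim\widetilde\g_x+1$. Together with Lemma \ref{lem g(tilde) cap Ws =0 for x in Ys - dim g(tilde)}, this forces every such orbit through $Y_s$ to have dimension at least $s+1$, with equality exactly at the critical points just found.

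The $G\times T$-orbit always contains the $G\times\{\psi_t\}$-orbit, so on $Y_s$ its dimension is also at least $s+1$; the main obstacle will be the reverse inequality at a critical point $x$, i.e., establishing $(G\times T)\cdot x=G\cdot x$. For this, Lemma \ref{lem g(tilde) cap Ws =0 for x in Ys - dim g(tilde)} forces $\dim\g_x<r-s$ while $\dim\widetilde\g_x=r-s$, so necessarily $x\in M_\xi$ and $\xi(x)\in T_x(G\cdot x)$. The $G$-equivariance of $\xi$ in Equation \eqref{eq dg reeb=reeb} propagates this tangency to every $y\in G\cdot x$, since $\xi(gx)=dg_x\xi(x)\in T_{gx}(G\cdot x)$; uniqueness of integral curves then keeps $\psi_tx\in G\cdot x$ for every $t$, and compactness of $G$ makes $G\cdot x$ closed, so $T\cdot x=\overline{\{\psi_tx\}}\subseteq G\cdot x$ and indeed $(G\times T)\cdot x=G\cdot x$ has dimension $s+1$. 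The final identification $\mathit{Crit}(\Psi^{X_1})=\mathit{Crit}(\Psi)=\{x\mid\widetilde\g_x=\g\}$ is the special case $s=0$ combined with the fact that $X_1$ was chosen in the complement of every proper $\widetilde\g_x$, so $X_1\in\widetilde\g_x$ forces $\widetilde\g_x=\g$.
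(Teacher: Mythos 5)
Your proof is correct and follows essentially the same route as the paper's: both reduce criticality of $\Psi^{X_{s+1}}|_{Y_s}$ to the linear dependence $(d\Psi^{X_{s+1}})_x\in\operatorname{span}\{(d\Psi^{X_j})_x\}_{j\le s}$, translate this into the condition $\dim\widetilde\g_x=r-s$ via the properties of the basis $(X_s)$, and obtain $T\cdot x\subset G\cdot x$ from the closedness of the $G$-orbit. Your shortcut through $C_{s+1}\cap Y_s$ and Proposition \ref{prop basis K-contact}~\textit{(v)}, and the uniform formula $\dim(G\times\{\psi_t\})\cdot x=r-\dim\widetilde\g_x+1$ in place of the paper's case split between $(Y_s)_\xi$ and $(Y_s)_\varnothing$, are only cosmetic differences.
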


\begin{proof}
Set $(Y_s)_\xi:= Y_s \cap M_\xi$ and $(Y_s)_\varnothing:= Y_s \cap M_\varnothing$, with $M_\varnothing$ and $M_\xi$ from page \pageref{def M xi}. We first show that $\mathit{Crit}(\Psi^{X_{s+1}}|_{Y_s})= \bigcup_{\substack{ x \in (Y_s)_\xi \\ \dim G \cdot x = s+1}} G \cdot x $.
Let $x \in Y_s$. By \textit{(i)} of Proposition \ref{prop basis K-contact}, we know that 
$\operatorname{span} \{d\Psi^{X_{1}}_x, ..., d\Psi^{X_{s}}_x\}$ is $s$-dimensional and $T_xY_s=\ker (df_s)_x$. Since the annihilator of $T_xY_s$ in $T_x^*M$ is $s$-dimensional, it follows that $T_xY_s$ lies in the kernel of a 1-form if and only if that 1-form lies in the span of $\{ d\Psi^{X_{1}}_x, ..., d\Psi^{X_{s}}_x\}$. Therefore, we obtain 
\begin{align*}
 \mathit{Crit}(\Psi^{X_{s+1}}|_{Y_s}) &= \left\lbrace x \in Y_s \ \left| \ (d\Psi^{X_{s+1}})_x \in \operatorname{span}\{ d\Psi^{X_{1}}_x, ..., d\Psi^{X_{s}}_x\}\right\rbrace \right.
\end{align*}
Using additivity of $d\Psi^X$ in $X$ and applying Equation \eqref{eq crit psi x}, this equation becomes
\begin{equation}
	\mathit{Crit}(\Psi^{X_{s+1}}|_{Y_s}) = \left\lbrace x \in Y_s \mid X_{s+1} \in \widetilde{\g}_x \oplus W_s \right\rbrace , \numberthis \label{eq crit restricted to Ys}
\end{equation}
 where  $W_s=\oplus_{j=1}^s \R X_j$.

By Lemma \ref{lem g(tilde) cap Ws =0 for x in Ys - dim g(tilde)}, $\dim \g_x<r-s$ and $\dim \widetilde{\g}_x \leq r-s$ for every $x \in Y_s$. With \textit{(iv)} of Proposition \ref{prop basis K-contact}, the condition in Equation \eqref{eq crit restricted to Ys} can then only be satisfied for $x \in Y_s$ with $\dim \widetilde{\g}_x = r-s$, 
thus $x \in (Y_s)_\xi$. Since in that case, it is $ \g = \widetilde{\g}_x \oplus W_s$, we automatically obtain that $ X_{s+1} \in \widetilde{\g}_x \oplus W_s$. 
 Hence,
\begin{align*}
 \mathit{Crit}(\Psi^{X_{s+1}}|_{Y_s}) &= \left\lbrace x \in (Y_s)_\xi \ \left| \ \dim \widetilde{\g}_x = r-s\right\rbrace \right.
= \bigcup_{\substack{ x \in (Y_s)_\xi \\ \dim G \cdot x = s+1}} G \cdot x 
\end{align*}
	Let $x \in (Y_s)_\varnothing$. From Lemma \ref{lem g(tilde) cap Ws =0 for x in Ys - dim g(tilde)}, we have $\dim \g_x \leq r-s-1$. Hence, $\dim (G\times T)\cdot x\geq \dim (G\times \{\psi_t\})\cdot x > \dim G \cdot x \geq s+1$, so the $G\times T$- and $G\times \{\psi_t\}$-orbits through $x$ are not minimal. Now, let $x \in (Y_s)_\xi$ and suppose that $\dim G \cdot x = s+1$ is minimal. By definition of  $(Y_s)_\xi$, $ \{\psi_t\} \cdot x \subset G \cdot x$, thus $\dim (G\times \{\psi_t\}) \cdot x = s+1$ as well. $G \cdot x$ is closed, hence the same holds for $T$: $ T \cdot x \subset G \cdot x$ and $\dim (G\times T) \cdot x = s+1$ is minimal.
\end{proof}

\begin{lemma}\label{lem GxTinv metric}
 There exists a contact metric $g$ on $M$ such that all $G$-fun\-da\-men\-tal vector fields are Killing vector fields, i.e., such that $g$ is $G \times T$-invariant.
\end{lemma}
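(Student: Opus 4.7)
The plan is to produce the desired metric in two stages: first $G$-average a given $K$-contact metric to obtain a $G \times T$-invariant symmetric bilinear form satisfying most of the contact metric conditions, and then correct the remaining compatibility condition using the standard polar decomposition trick familiar from symplectic geometry.

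To start, the $K$-contact hypothesis yields a contact metric $g_0$ with $\xi$ Killing, so that $T = \overline{\{\psi_t\}} \subset \mathit{Isom}(M,g_0)$. I would then set $g_1 := \int_G h^* g_0 \, dh$ with respect to the normalized Haar measure on $G$. By construction $g_1$ is $G$-invariant, and it remains $T$-invariant since $G$ and $T$ commute (cf.\ Equation \eqref{eq fund vf foliate}) while $g_0$ is already $T$-invariant. Conditions \textit{(i)} and \textit{(ii)} of the contact metric definition refer only to $\alpha$, $\xi$, and the splitting $TM \cong \ker\alpha \oplus \R\xi$, all of which are preserved pointwise by every $h \in G$ by Equation \eqref{eq dg reeb=reeb}; a short check shows these two conditions are inherited by $g_1$ from $g_0$.

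Condition \textit{(iii)}, however, is not convex in $g$, so averaging need not preserve it. I would repair this via the polar decomposition performed fiberwise on the symplectic vector bundle $(\ker\alpha, d\alpha|_{\ker\alpha})$. Define $A\colon \ker\alpha \to \ker\alpha$ by $d\alpha(X,Y) = g_1(AX,Y)$; then $A$ is a $g_1$-skew-symmetric, fiberwise invertible bundle endomorphism, and its polar decomposition writes $A = UP$ with $P = \sqrt{-A^2}$ positive definite $g_1$-symmetric and $U$ a $g_1$-orthogonal operator satisfying $U^2 = -\operatorname{Id}$, with $U$ and $P$ commuting. Setting $J := -U$ on $\ker\alpha$ and $g(X,Y) := g_1(PX,Y)$ on $\ker\alpha$, one checks that $g(X,Y) = d\alpha(JX,Y)$ and $J^2 = -\operatorname{Id}$; extending by $g|_{\R\xi} = \alpha \otimes \alpha$ and $\ker\alpha \perp_g \R\xi$ then gives a contact metric in the sense of the definition.

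Equivariance of the final metric is automatic from the canonicity of this construction: since $d\alpha$ and $g_1$ are $G \times T$-invariant, the operator $A$ commutes with the differentials of every element of $G \times T$, and hence so do its polar factors $P$ and $U$, which are smooth functions of $A$. Thus $g$ is $G \times T$-invariant, and in particular $T$ acts by $g$-isometries, i.e.\ $\xi$ is Killing for $g$. The main obstacle in the whole argument is precisely the non-convexity of condition \textit{(iii)}, which rules out a one-shot averaging argument on the space of contact metrics; the polar decomposition is the standard workaround, and it transfers to the contact setting without difficulty once one works fiberwise on $(\ker\alpha, d\alpha|_{\ker\alpha})$.
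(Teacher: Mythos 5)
Your proof is correct and follows essentially the same route as the paper: the paper's proof simply takes a $G\times T$-invariant, $d\alpha$-compatible metric $h$ on $\ker\alpha$ (asserted to exist by compactness of $G\times T$) and sets $g = h \oplus \alpha\otimes\alpha$. Your averaging-plus-polar-decomposition argument is precisely the standard justification of that existence claim, so you have merely filled in the detail the paper leaves implicit.
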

\begin{proof}
 Choose any $G\times T$-invariant and $d\alpha$-compatible metric $h$ on $\ker \alpha$, which has to exist since $G\times T$ is compact. Then $g:= h \ \oplus \ \alpha \otimes \alpha$ is a $G \times T$-invariant contact metric on $M$.
\end{proof}
Now, let $N \subset \mathit{Crit}(\Psi^{X_{s+1}}|_{Y_s})$ be a connected component of the critical set.
From now on, we will work with a metric according to Lemma \ref{lem GxTinv metric}, i.e., with an isometric $G\times T$-action.

\begin{lemma}\label{lem tot geod}
 $N$ is a totally geodesic closed submanifold of even codimension.
\end{lemma}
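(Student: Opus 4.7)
The plan is to realize $N$ as a connected component of the fixed-point set $M^K$ of a suitable closed subtorus $K \subseteq G\times T$ acting isometrically, intersected with the level set $Y_s$; the three asserted properties then follow from standard facts about isometric torus actions.

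First I would fix $x_0 \in N$ and take $K$ to be the identity component of the $(G\times T)$-isotropy subgroup at $x_0$. By Lemma~\ref{lem crit= min Gpsi (GT)-orbits}, the orbit $(G\times T)\cdot x_0 = G\cdot x_0$ is $(s+1)$-dimensional, so $K$ is a closed subtorus of $G\times T$ of codimension $s+1$. Its Lie algebra contains $\g_{x_0}$ and the element $(X^{x_0}, -\xi)$, where $\xi$ is identified with a generator of the Reeb subgroup of $\mathfrak{t}$ and $X^{x_0}$ satisfies $X^{x_0}_M(x_0) = \xi(x_0)$. Using the $G\times T$-invariant metric from Lemma~\ref{lem GxTinv metric}, $K$ acts by isometries on $(M,g)$ and also on $(Y_s, g|_{Y_s})$ (since $Y_s$ is $(G\times T)$-invariant, as $\alpha$ and every $(X_j)_M$ are), so $M^K$ is a closed totally geodesic submanifold of $M$ and $M^K \cap Y_s$ is a closed totally geodesic submanifold of $Y_s$.

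Next I would verify that $N$ coincides, near $x_0$, with the connected component of $M^K \cap Y_s$ containing $x_0$. For $N \subseteq M^K$: every point of $N$ near $x_0$ has $(G\times T)$-orbit of dimension $s+1$ by Lemma~\ref{lem crit= min Gpsi (GT)-orbits}, and in an abelian group such points in a connected orbit-type stratum share the same isotropy subgroup, which must be $K$. Combined with $N \subseteq Y_s$ this gives $N \subseteq M^K \cap Y_s$. For the reverse inclusion $M^K \cap Y_s \subseteq N$, the relation $(X^{x_0}, -\xi) \in \mathfrak{k}$ propagates to $X^{x_0}_M(y) = \xi(y)$ on $M^K$, and $\g_{x_0} \subseteq \mathfrak{k}$ propagates to $\g_{x_0} \subseteq \g_y$; hence $\widetilde{\g}_y \supseteq \widetilde{\g}_{x_0}$ and $\dim \widetilde{\g}_y \geq r-s$. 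For $y \in Y_s$, Lemma~\ref{lem g(tilde) cap Ws =0 for x in Ys - dim g(tilde)} gives the reverse bound $\dim \widetilde{\g}_y \leq r-s$, so equality holds and $y$ lies in the critical set by Lemma~\ref{lem crit= min Gpsi (GT)-orbits}.

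For the even codimension, I would observe that the slice representation of $K$ on the normal bundle to $N$ (say, inside $Y_s$) at $x_0$ has no nonzero fixed vector — because $N$ is exactly the $K$-fixed locus there — and since $K$ is a compact torus this representation decomposes into real $2$-dimensional weight spaces, forcing the codimension to be even. Closedness of $N$ as a subset of $M$ is immediate from its being a connected component of the closed critical set of $\Psi^{X_{s+1}}|_{Y_s}$. The main obstacle I anticipate is the careful identification in the middle step: one must extract from $(X^{x_0}, -\xi) \in \mathfrak{k}$ the pointwise relation $\widetilde{\g}_y \supseteq \widetilde{\g}_{x_0}$ on the whole component of $M^K$ through $x_0$, and then combine it with Lemma~\ref{lem g(tilde) cap Ws =0 for x in Ys - dim g(tilde)} and Lemma~\ref{lem crit= min Gpsi (GT)-orbits} to pin the critical set down to $M^K \cap Y_s$ locally; once this identification is in place, the geometric conclusions are standard consequences of the theory of fixed-point sets of isometric torus actions.
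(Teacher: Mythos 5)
Your proof is correct and follows essentially the same route as the paper's: both arguments identify $N$ with a component of the fixed-point set of the identity component $K$ of the $(G\times T)$-isotropy acting isometrically on $Y_s$ (the paper via local constancy of the isotropy algebra along $N$ plus Kobayashi's theorem on common zero sets of Killing fields, you via the slice representation of the compact torus $K$, which is the content of Kobayashi's corollary anyway). The one genuine addition is your reverse inclusion $M^K\cap Y_s\subseteq \mathit{Crit}(\Psi^{X_{s+1}}|_{Y_s})$ near $x_0$ (via $\widetilde{\g}_y\supseteq\widetilde{\g}_{x_0}$ combined with Lemma \ref{lem g(tilde) cap Ws =0 for x in Ys - dim g(tilde)}), which the paper leaves implicit but which is exactly what is needed to know that $N$ is a \emph{full} component of the fixed-point set rather than a proper subset, so that total geodesy and even codimension actually transfer to $N$.
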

\begin{proof}
By Lemma \ref{lem crit= min Gpsi (GT)-orbits}, $N$ is a union of minimal dimensional $G \times T$-orbits. 
The isotropy group of a point in a tubular neighborhood of an orbit $(G \times T)\cdot p$ is a subgroup of $(G\times T)_p$. By minimality, every point of $N$ in that tubular neighborhood then has to have the same isotropy algebra, 
so $\{x \in N \mid (\g \times \mathfrak{t})_x = (\g \times \mathfrak{t})_p \}$ is open in $N$. Since $N$ is connected, it follows that the connected component of the isotropy remains the same along $N$, $(\g \times \mathfrak{t})_x =: (\g \times \mathfrak{t})_N$ for all $x \in N$. Since all fundamental vector fields are Killing, we can apply a result of Kobayashi \cite[Corollary~1]{kobayashi1958fixed}, which directly yields that $N$ is a totally geodesic closed submanifold of even codimension.
\end{proof}

We will denote the $g$-orthogonal normal bundle of $N$ in $Y_s$ by $\nu N$, $T_pY_s = T_pN \oplus_{\bot_g} \nu_p N$. We will now prove the Morse-Bott property of $\Psi^{X_{s+1}}|_{Y_s}$. 
\begin{proposition}\label{prop Hess formula}
 The Hessian $H$ of $\Psi^{X_{s+1}}|_{Y_s}$ along $N$ in normal directions 
 is given by
\begin{align*}
 H_p (v,w) & = 2g(w, \nabla_v (JY)) = 2g(w, J\nabla_v Y),
\end{align*}
where $p \in N$, $Y:=(X_{s+1})_{Y_s} - \alpha ((X_{s+1})_{Y_s})_p\xi$, and $g$ is a metric as in Lemma \ref{lem GxTinv metric}.

Furthermore, the vector $J\nabla_v Y$ is normal and non-zero for every normal vector $v\neq 0$ and $H$ is non-degenerate in normal directions. 

In particular, $\Psi^{X_{s+1}}|_{Y_s}$ is a Morse-Bott function.
\end{proposition}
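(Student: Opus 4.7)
The plan is to rewrite everything in terms of a well-chosen Killing vector field $Y$ and then to exploit the $K$-contact identities. Since $(X_{s+1})_M$ preserves $\alpha$, Cartan's formula yields $d\Psi^{X_{s+1}} = -\iota_{(X_{s+1})_M}d\alpha$. Using $d\alpha(A,B) = g(A,JB)$ (which, since $d\alpha$ is skew, forces $J$ to be $g$-skew) and $J\xi = 0$, one may replace $(X_{s+1})_M$ inside $\iota_{(\cdot)}d\alpha$ by $Y := (X_{s+1})_M - c\xi$, $c := \Psi^{X_{s+1}}(p)$, obtaining $d\phi|_q(u) = -g(Y(q),Ju)$ on $Y_s$, where $\phi := \Psi^{X_{s+1}}|_{Y_s}$. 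The advantage of $Y$ is twofold: it is Killing with respect to the $G\times T$-invariant contact metric of Lemma~\ref{lem GxTinv metric} (since both $(X_{s+1})_M$ and $\xi$ are), and $Y(p) \in T_p N$. The latter follows from Lemma~\ref{lem crit= min Gpsi (GT)-orbits} together with Proposition~\ref{prop basis K-contact}(v): at $p \in N$ one may write $X_{s+1} = \tilde X + \sum_{j=1}^{s} c_j X_j$ with $\tilde X \in \widetilde{\g}_p$ (so $\tilde X_M(p) = c\xi(p)$), hence $Y(p) = \sum_j c_j (X_j)_M(p) \in T_p(G\cdot p) \subset T_p N$; the same decomposition at nearby points of $N$ shows that $Y|_N$ is pointwise tangent to $N$.

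For $v, w \in \nu_p N$, extend $w$ to a vector field $V$ on $M$ and use that at the critical point $H_p(v, w) = v(V\phi)|_p$. Expanding $V\phi = -g(Y, JV)$ gives
\[
H_p(v, w) = -g(\nabla_v Y, Jw) - g(Y(p), \nabla_v(JV)|_p).
\]
The first summand becomes $g(w, J\nabla_v Y)$ by $g$-skewness of $J$. For the second, identity~\eqref{eq nabla J=R} splits $\nabla_v(JV) = R(\xi, v)w + J\nabla_v V$; the piece $J\nabla_v V$ carries the extension-dependence and is cancelled by the corresponding ambiguity in $v(V\phi)|_p$ (so that the intrinsic Hessian $\nabla d\phi$ is well defined), leaving the curvature term $-g(Y(p), R(\xi, v) w)$. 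A symmetrization exploiting the skew-symmetry of $\nabla Y$ (from $Y$ being Killing), the Reeb formula~\eqref{eq nabla xi} $\nabla_v\xi = -Jv$, and the pair symmetry of the Riemann tensor transforms this term into a second copy of $g(w, J\nabla_v Y)$, producing the factor~$2$. The equivalent expression $2g(w, \nabla_v(JY))$ then follows from~\eqref{eq nabla J=R} applied to $Y$, because the correction $g(w, R(\xi, v)Y(p))$ vanishes: $Y(p)$ and $\xi(p)$ both lie in $T_p N$ while $w \in \nu_p N$.

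It remains to establish non-degeneracy of $H_p$ on $\nu_p N$. Since $Y|_N$ is tangent to $N$, the skew-symmetry of $\nabla Y$ forces $\nabla_v Y \in (T_p N)^\perp = \nu_p N \oplus (T_p Y_s)^\perp$ for $v \in \nu_p N$. The complement $(T_p Y_s)^\perp$ is spanned by the gradients $\operatorname{grad}\Psi^{X_j}|_p$, which by the calculation of $d\phi$ are of the form $J(X_j)_M(p)$, so $J((T_p Y_s)^\perp) \subset T_p N$. Combined with $\nu_p N \subset \ker\alpha_p$ (as $\xi(p) \in T_p N$) and $J(\ker\alpha) \subset \ker\alpha$, this lets one isolate the $\nu_p N$-component of $J\nabla_v Y$. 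Its non-vanishing for $v \neq 0$ is guaranteed by Proposition~\ref{prop basis K-contact}(iv)--(v): $X_{s+1}$ lies outside $\widetilde{\g}_x \oplus W_s$ for every $x \in Y_s$ off $N$, so the linearization of the $X_{s+1}$-flow at $p$ acts without zero eigenvalues on the transverse slice $\nu_p N$. Together with Lemma~\ref{lem tot geod} this yields the Morse-Bott property. The most delicate step I expect is the symmetrization producing the factor~$2$: it requires simultaneous use of the Killing property of $Y$, the curvature identity~\eqref{eq nabla J=R}, and~\eqref{eq nabla xi}, and is precisely where the full $K$-contact hypothesis enters, rather than just the contact metric axioms.
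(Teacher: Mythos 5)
Your overall strategy coincides with the paper's: a Rukimbira-style computation of the Hessian of $\alpha(X)$ for the Killing field $X=(X_{s+1})_{Y_s}$, using \eqref{eq nabla J=R}, \eqref{eq nabla xi} and the decomposition $X=a\xi+Y$, followed by a normality and non-vanishing argument for $J\nabla_v Y$. But two steps at the heart of your argument do not hold up. First, the factor of $2$. In the paper's computation it appears already at first order: expanding $W(\alpha(X))=W(g(\xi,X))$ produces the two \emph{equal} contributions $g(\nabla_W\xi,X)=g(W,JX)$ and $g(\xi,\nabla_WX)=-g(\nabla_\xi X,W)=g(JX,W)$ (Killing property plus $\nabla_\xi X=\nabla_X\xi=-JX$). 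Your starting formula $d\phi(u)=-g(Y,Ju)$ captures only one copy, and your proposed repair --- ``symmetrizing'' the curvature term $-g(Y(p),R(\xi,v)w)$ into a second copy of $g(w,J\nabla_vY)$ --- cannot work: that term depends only on the pointwise value $Y(p)$, while $g(w,J\nabla_vY)$ is first order in $Y$, and no algebraic identity converts one into the other. Worse, the term is actually zero, because the paper proves the key fact that $Y$ \emph{vanishes identically} on $N$ (on the critical set $\iota_Y d\alpha=0$, $Y|_N\in\ker\alpha$ since $\alpha(X)|_N\equiv a$, and $d\alpha$ is non-degenerate on $\ker\alpha$); you establish only that $Y|_N$ is tangent to $N$, which is strictly weaker. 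Your stated reason for discarding the correction $g(w,R(\xi,v)Y(p))$ --- that $Y(p)$ and $\xi(p)$ are tangent while $w$ is normal --- is also invalid: for a totally geodesic submanifold, curvature terms with mixed tangent/normal arguments such as $R(\xi,v)Y$ with $v\in\nu_pN$ need not lie in $T_pN$. The correct reason is simply $Y(p)=0$.

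Second, the non-degeneracy. Your assertion that ``the linearization of the $X_{s+1}$-flow at $p$ acts without zero eigenvalues on $\nu_pN$'' because $X_{s+1}$ lies outside $\widetilde{\g}_x\oplus W_s$ off the critical set only says that nearby points are non-critical; an isolated critical submanifold can perfectly well carry a degenerate Hessian, so this does not establish the Morse--Bott property. The paper instead argues via Jacobi fields (following Rukimbira): if $\nabla_vY(p)=0$, then, since $Y$ is Killing and $Y(p)=0$, the Jacobi field $Y\circ\gamma_v$ along the geodesic $\gamma_v$ with $\dot\gamma_v(0)=v$ has vanishing initial data, hence $Y\equiv 0$ along $\gamma_v$, hence $\gamma_v$ consists of critical points and lies in $N$, forcing $v\in T_pN$ --- a contradiction with $v\in\nu_pN$. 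Note that this argument again requires $Y(p)=0$, not mere tangency. Finally, the normality of $J\nabla_vY(p)$ is obtained in the paper from $\nabla_vX(p)\in\nu_pN$ (Killing property combined with Lemma \ref{lem tot geod}) together with $J(\nu_pN)\subset\nu_pN$; your sketch of ``isolating the $\nu_pN$-component'' leaves this incomplete. To repair your proof, establish $Y|_N\equiv 0$ first and rerun the Hessian computation directly from $\alpha(X)=g(\xi,X)$ rather than from $d\phi=-\iota_Yd\alpha$.
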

\begin{proof}
 Let $p\in N$ and $v,w\in \nu_pN$ be arbitrary. In a sufficiently small neighborhood of $p$, extend $v$ and $w$ to local vector fields $V,W$ around $p$ such that $(\nabla V)(p)=(\nabla W)(p)=0$. To shorten notation, let $X:= (X_{s+1})_{Y_s}$. Note that since $[X,\xi]=0$ by Equation \eqref{eq fund vf foliate}, we have $\nabla_X \xi = \nabla_\xi X$. The first computation in \cite[Section~2]{rukimbira1999} is equally applicable in our case since $X$ is a Killing vector field, hence we obtain at $p$, applying Equations \eqref{eq nabla J=R} and \eqref{eq nabla xi},
 \allowdisplaybreaks
\begin{align*}
 H_p(v,w) & = \left( V(W(\alpha(X)) \right)(p)=  \left( V(W(g(\xi,X))) \right)(p)\\
&= \left( V(g(\nabla_W \xi,X)+g( \xi,\nabla_WX)) \right)(p)\\
&= \left( V(g(-JW ,X)-g( \nabla_\xi X,W))\right) (p)\\
&= \left( -g(\nabla_V JW ,X)-g( JW ,\nabla_VX) +V(g( J X,W))\right) (p)\\
&= \left( -g(\nabla_V JW ,X)-g( JW ,\nabla_VX) +g(\nabla_V J X,W)+g( J X,\nabla_VW)\right) (p)\\
&= \left( -g((\nabla_V J)W ,X)-g( J(\nabla_VW) ,X)+g( W ,J\nabla_VX) +g((\nabla_V J) X,W)\right.\\
 &\quad + \left. g(J(\nabla_VX),W)+g( J X,\nabla_VW)\right) (p)\\
&= \left( -g(R(\xi,V)W,X)+2g( W ,J\nabla_VX) +g(R(\xi,V)X,W)\right) (p)\\
&= \left( 2g(R(\xi,V)X,W)+2g( W ,J\nabla_VX)\right) (p).\numberthis \label{eq hess 1}
\end{align*}
Combining Lemma \ref{lem tot geod}, Equation \eqref{eq nabla xi}, and the fact that $\xi(x) \in T_xN$ for all $x \in N$, we obtain that $Jz=-\nabla_z\xi  \in TN$ for all $z \in TN$, hence 
\[ J: T_pN \rightarrow T_pN, \quad J: \nu_pN \rightarrow \nu_pN.\]
Set $a:=  \alpha ((X_{s+1})_{Y_s})_p$ and decompose $X$ as $X=a\xi + Y$. It is
\[(\nabla_VX)(p)=(a\nabla_V \xi+\nabla_V Y)(p) =-a Jv + (\nabla_V Y)_p.\]
Using the tensor properties of the curvature tensor $R$ and that $R(\xi,V)\xi=-V$ (see \cite[p.~65]{blair1976contact}), we can then continue Equation \eqref{eq hess 1} as follows:
\begin{align*}
 H_p(v,w) & =\left(2ag(R(\xi,V)\xi,W)\!+\!2g(R(\xi,V)Y,W)\!+\!2g( W ,J(-a JV \!+\! \nabla_V Y))\right) (p)\\
 &= -2ag(v,w)+ 2g(R(\xi,V)Y,W)(p) +2ag(v,w) +2g(W, J\nabla_V Y)(p)\\
 &= 2g(R(\xi,V)Y,W)(p)+ 2g(W, J\nabla_V Y)(p)\\
 &= 2g(W, (\nabla_V J)Y + J\nabla_V Y)(p)\\
 &= 2g(W, \nabla_V (JY))(p). \numberthis \label{eq hess 2}
\end{align*}

 It remains to show that $ \nabla_V (JY)(p)=(R(\xi,V)Y+J\nabla_V Y)(p)$ equals $J\nabla_V Y(p)$, is non-zero, and lies in $\nu_pN$. Let $\eta$ be an arbitrary vector field in a neighborhood of $p$ that is tangent to $N$ at $p$. By Lemma \ref{lem tot geod}, $\nabla_\eta X (p) \in T_pN $. Since $X$ is Killing and $v \in T_pN^{\perp_g}$, we then have $g(\eta, \nabla_V X)_p = -g(\nabla_\eta X, V)_p = 0$.
Thus, $\nabla_V X (p) \in \nu_pN$ and, hence, $J\nabla_V X (p)\in \nu_pN$. With Equations \eqref{eq nabla xi}, \eqref{eq J xi and J2} and $\alpha(V)_p=g(\xi, V)_p=0$, we obtain
\[ g(\eta, J\nabla_V Y)_p =g(\eta, J\nabla_V X)_p - g(\eta, J\nabla_V (a\xi))_p = -g(\eta, aV)_p = 0,\]
hence $(J\nabla_V Y)(p) \in (T_pN)^{\bot_g}=\nu_pN$. Analogously, we obtain $(\nabla_V Y)(p) \in \nu_pN$.\\
Recall that $\mathcal{L}_X \alpha = 0$. Since $N$ is critical, we obtain on $N$
\[0=-d\iota_X \alpha = \iota_X d \alpha = a\iota_\xi d \alpha + \iota_Y d \alpha = \iota_Y d\alpha.\]
But $Y|_N \in \Gamma(\ker \alpha)$ since $\alpha(X)|_N \equiv a$, and $d \alpha$ is non-degenerate on $\ker \alpha$; therefore, $Y = 0$ on $N$. Hence, $ \nabla_V (JY)(p)=(R(\xi,V)Y+J\nabla_V Y)(p)= (J\nabla_V Y)(p)$.

We now follow the line of argumentation of Rukimbira in \cite[Proof of Lemma 1]{rukimbira1995topology} to show that $\nabla_v Y$ does not vanish on $N$. Note that $Y$ is a Killing vector field since $X$ and $\xi$ are. Let $\gamma$ be the geodesic through $\gamma(0)=p$ with tangent vector $\dot \gamma(0)=v$. Suppose $(\nabla_v  Y)(p)=0$. Then the Jacobi field $ Y \circ \gamma$ satisfies $ Y \circ \gamma (0)=0$ and $\tfrac{\nabla}{dt}(  Y \circ \gamma) (0)=0$, thus $ Y$ vanishes along all of $\gamma$. But this means that along $\gamma$, $X = a\xi$; by Equation \eqref{eq crit psi x}, $\gamma$ hence consists of critical points of $\Psi^X|_{Y_s}$. Thus, $\gamma$ lies in $N$ and $v$ has to be tangent to $N$. This, however, contradicts $v \in \nu_pN$. We conclude that $\nabla_V Y(p)$ is non-zero. Since $\nabla_V Y(p)$ is normal and, hence, lies in $\ker \alpha$, it follows that $J(\nabla_V Y)(p)$ is non-zero. Then we have for every non-zero normal vector $v \in \nu_pN$:
\[H_p(v, J(\nabla_v  Y)) = 2g(J(\nabla_v  Y),J(\nabla_v Y))= 2g(\nabla_v Y,\nabla_v  Y)\neq 0.
\quad \qedhere\]
\end{proof}
\begin{remark}
	$J$ is skew-symmetric w.r.t. $H$: For $v,w \in \nu_pN$, we have
	\begin{align*}
		\tfrac{1}{2}H_p(w,Jv)&= g(w,J\nabla_{Jv}Y)=-g(Jw, \nabla_{Jv}X + aJ^2v)= g(\nabla_{Jw}X,Jv) +ag(Jw,v) \\
		&= g(\nabla_{Jw}Y,Jv)+g(-aJ^2w,Jv)-ag(w, Jv)\\
		&= -g(J\nabla_{Jw} Y,v)+ag(w,Jv)-ag(w,Jv) = -\tfrac{1}{2}H_p(Jw,v).
	\end{align*}
	In particular, $J$ preserves the positive and negative normal bundle, $J: \nu^\pm N \rightarrow \nu^\pm N$.
\end{remark}

\section{Equivariant basic cohomology}\label{sec eq basic coh}
\subsection{Equivariant Cohomology of a \texorpdfstring{$\mathfrak{k}$}{k}-dga}
We will briefly review the concept of equivariant cohomology. For a more elaborate introduction, we refer to \cite{guillemin1999supersymmetry}, presenting the material from Cartan (cf.~\cite{cartan1950}) in a modern reference; see also \cite[Section~4]{GNTequivariant} or \cite[Section~3.2]{goertsches2010equivariant}.

The \emph{Cartan complex} of a $\mathfrak{k}$-dga $A$ is defined as 
\[C_\mathfrak{k}(A):= (S(\mathfrak{k}^*)\otimes A)^\mathfrak{k},\]
where $S(\mathfrak{k}^*)$ denotes the symmetric algebra of $\mathfrak{k}^*$ and the superscript denotes the subspace of $\mathfrak{k}$-in\-va\-ri\-ant elements, i.e.,those $\omega \in S(\mathfrak{k}^*)\otimes A$ for which $L_X \omega = 0$ for every $X \in \mathfrak{k}$. When regarding an element $\omega \in C_\mathfrak{k}(A) $ as a $\mathfrak{k}$-equivariant polynomial map $\mathfrak{k} \rightarrow A$, i.e., $\omega([X,Y])=L_X(\omega(Y))$ for every $X, Y \in \g$, the differential $d_\mathfrak{k}$ of $C_\mathfrak{k}(A)$ is given by
\[(d_\mathfrak{k} \omega)(X) := d(\omega(X)) - \iota_X (\omega(X)).\] 
If $\{X_i\}_{i=1}^r$ is a basis of $\mathfrak{k}$ with dual basis $\{u_i\}_{i=1}^r$, the differential can be written as
\[d_\mathfrak{k}(\omega) = d(\omega)- \sum_{i=1}^r \iota_{X_i}(\omega)u_i. \]
$C_\mathfrak{k}(A)$ can be endowed with the grading $\deg(f \otimes \eta)= 2\deg(f) + \deg(\eta)$. Then $d_\mathfrak{k}$ raises the degree by $1$.
The \emph{equivariant cohomology} of $A$ (in the Cartan model) is then defined by
\[H^*_\mathfrak{k}(A):= H^*(C_\mathfrak{k}(A), d_\mathfrak{k}).\]
We remark that there are different conventions in the literature concerning the sign in the definition of the differential.

\begin{example}
	If a compact Lie group $K$ acts on a manifold $N$, this action induces a $\mathfrak{k}$-dga structure on the algebra of differential forms $\Omega(N)$. This enables us to apply the general construction of the equivariant cohomology of a $\mathfrak{k}$-dga and we obtain the equivariant cohomology of the $K$-action as
	\[H^*_K(N) = H^*_\mathfrak{k}(\Omega(N)).\]
\end{example}

For the following definition compare 
\cite[Definition~2.3.4]{guillemin1999supersymmetry}.
\begin{definition}
	A $\mathfrak{k}$-dga $A$ is called \emph{free}, if, given a basis $X_i$ of $\mathfrak{k}$, there are $\theta_i \in A_1$ (called \emph{connection elements}) such that $\iota_{X_j} (\theta_i) =\delta_{ij}$. If, in addition, the $\theta_i$ can be chosen such that their span in $A_1$ is $\mathfrak{k}$-invariant, then $A$ is said to be \emph{of type (C)}.
\end{definition}
\begin{lemma}\label{lemma group action of type c}
	A free $\mathfrak{k}$-dga $A$ is automatically of type (C) if the action of $\mathfrak{k}$ on $A$ is induced by an action of a compact Lie group.
\end{lemma}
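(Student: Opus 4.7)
The strategy is the classical averaging trick. Package the given connection elements into a single tensor, average it over the compact group, and verify that both the connection property and the invariance of the span are preserved.

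Concretely, suppose $A$ is free with connection elements $\theta_1,\ldots,\theta_r\in A_1$ satisfying $\iota_{X_j}\theta_i=\delta_{ij}$ for a chosen basis $\{X_i\}$ of $\mathfrak{k}$. Consider the element
\[\theta:=\sum_{i=1}^r \theta_i\otimes X_i \in A_1\otimes\mathfrak{k}.\]
The condition on the $\theta_i$ is precisely that $\iota_X\theta = 1\otimes X$ for every $X\in\mathfrak{k}$, where $\iota_X$ acts on the first tensor factor. Let $K$ denote the compact Lie group inducing the $\mathfrak{k}$-action on $A$, with representation $\rho$. Let $K$ act on $A_1\otimes\mathfrak{k}$ via $\rho\otimes\operatorname{Ad}$ and set
\[\bar\theta:=\int_K (\rho(h)\otimes\operatorname{Ad}_h)(\theta)\,dh,\]
using the normalized Haar measure on $K$.

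The first step is to check that $\bar\theta$ is again a connection, i.e.\ $\iota_X\bar\theta = 1\otimes X$. Using the compatibility relation $\rho(h)\iota_X\rho(h^{-1})=\iota_{\operatorname{Ad}_h X}$ from the definition of a $K^*$-algebra, and expanding $\operatorname{Ad}_{h^{-1}}X_j$ in the basis $\{X_k\}$, a short calculation shows that the integrand $\iota_{X_j}(\rho(h)\otimes\operatorname{Ad}_h)(\theta)$ equals $1\otimes X_j$ independently of $h\in K$ (the matrix of $\operatorname{Ad}_{h^{-1}}$ and the matrix of $\operatorname{Ad}_h$ cancel). Integration then yields $\iota_{X_j}\bar\theta=1\otimes X_j$, as required.

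The second step is to exploit $K$-invariance of $\bar\theta$. Writing $\bar\theta=\sum_i\bar\theta_i\otimes X_i$ and $\operatorname{Ad}_h X_i=\sum_j a_{ij}(h)X_j$, the identity $(\rho(h)\otimes\operatorname{Ad}_h)\bar\theta=\bar\theta$ becomes
\[\sum_i a_{ij}(h)\,\rho(h)\bar\theta_i = \bar\theta_j\quad\text{for every }h\in K,\ j=1,\ldots,r.\]
Since the matrix $(a_{ij}(h))$ is invertible, this can be solved to express $\rho(h)\bar\theta_i$ as a linear combination of the $\bar\theta_j$. Hence $V:=\operatorname{span}(\bar\theta_1,\ldots,\bar\theta_r)\subset A_1$ is $K$-stable, and in particular $\mathfrak{k}$-invariant under $\mathcal L$ (differentiating at $h=e$). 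The tuple $(\bar\theta_1,\ldots,\bar\theta_r)$ is therefore a family of connection elements whose span is $\mathfrak{k}$-invariant, exhibiting $A$ as of type~(C).

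No serious obstacle is expected: the only thing to be careful about is bookkeeping of the adjoint versus coadjoint action and keeping the compatibility relation $\rho(h)\iota_X\rho(h^{-1})=\iota_{\operatorname{Ad}_h X}$ straight, so that the contractions done under the integral sign give back $1\otimes X_j$ pointwise in $h$.
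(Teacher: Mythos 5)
Your proof is correct, and it is essentially the argument the paper delegates to its reference: the paper's ``proof'' is just a citation to \cite[Section~2.3.4]{guillemin1999supersymmetry}, where exactly this averaging of $\theta=\sum_i\theta_i\otimes X_i$ over $K$ via $\rho\otimes\operatorname{Ad}$ is carried out. The only implicit point worth noting is that the integral must make sense (the representation $\rho$ is assumed continuous, which holds in all the $K^*$-algebras appearing in the paper), and that $\rho(h)$ fixes scalars so the contraction identity $\iota_X\bar\theta=1\otimes X$ holds pointwise in $h$ as you claim.
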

\begin{proof}
	\cite[Section~2.3.4]{guillemin1999supersymmetry}.
\end{proof}

\begin{definition}
	Let $A$ be a $\mathfrak{k}$-dga. The differentially closed set $A_{\mathrm{bas}} := \{ \omega \in A \mid \iota_X \omega = 0 = L_X \omega \text{ for every } X \in \mathfrak{k} \}$ is called the \emph{basic subcomplex} of $A$.
\end{definition}

\begin{proposition}\label{prop H G = H(bas) if of type C}
	If $A$ is a $\mathfrak{k}$-dga of type (C), then $H^*_{\mathfrak{k}}(A) = H^*(A_{\mathrm{bas}\, \mathfrak{k}})$.
\end{proposition}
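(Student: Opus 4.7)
The strategy I would follow is to factor through the Weil model of equivariant cohomology and use the connection elements to exhibit an explicit homotopy equivalence with the basic subcomplex. Concretely, let $W=W(\mathfrak{k})=S(\mathfrak{k}^*)\otimes \Lambda(\mathfrak{k}^*)$ denote the Weil algebra with its standard $\mathfrak{k}$-dga structure, in which the exterior generators serve as connection elements and the symmetric generators behave as their curvatures. The Mathai--Quillen--Kalkman isomorphism identifies the Cartan complex $C_\mathfrak{k}(A)$ with the basic subcomplex $(W\otimes A)_{\mathrm{bas}\,\mathfrak{k}}$ of the tensor product $\mathfrak{k}$-dga. This step reduces the claim to showing that the natural inclusion $\iota\colon A_{\mathrm{bas}\,\mathfrak{k}}\hookrightarrow (W\otimes A)_{\mathrm{bas}\,\mathfrak{k}}$, $\eta\mapsto 1\otimes\eta$, is a quasi-isomorphism.

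To build a candidate inverse on cohomology, I would exploit the type (C) connection elements $\theta_1,\dots,\theta_r\in A_1$ together with their curvatures $\Omega_i\in A_2$ (the horizontal parts of the $d\theta_i$). These define a $\mathfrak{k}$-dga homomorphism $\chi\colon W\to A$ sending $\theta_i^W\mapsto\theta_i$ and $u_i\mapsto\Omega_i$, the classical Chern--Weil map. Applying $\chi$ to the first factor of $W\otimes A$ and then projecting horizontally using the idempotent built from the $\theta_i$ yields a chain map $\pi\colon (W\otimes A)_{\mathrm{bas}\,\mathfrak{k}}\to A_{\mathrm{bas}\,\mathfrak{k}}$, for which $\pi\circ\iota=\mathrm{id}$ on the nose.

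The main obstacle, as I see it, is showing that $\iota\circ\pi$ is chain homotopic to the identity; this is where the type (C) hypothesis does the substantive work. The classical approach is to exploit Koszul acyclicity of $W$: there is an explicit contracting homotopy on $W$, and the $\mathfrak{k}$-invariance of $\mathrm{span}\{\theta_i\}$ provided by type (C) is precisely what is needed to transport this contraction through $\chi$ in a $\mathfrak{k}$-equivariant way that survives passage to basic subcomplexes. Equivalently, one can set up the spectral sequence associated to the filtration of $(W\otimes A)_{\mathrm{bas}\,\mathfrak{k}}$ by polynomial degree in $S(\mathfrak{k}^*)$: Koszul acyclicity collapses the $E_1$-page onto $H^*(A_{\mathrm{bas}\,\mathfrak{k}})$, while type (C) ensures the relevant contracting operators can be chosen $\mathfrak{k}$-equivariantly so that the argument respects the basicness condition. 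In either variant the delicate point is verifying that the horizontalization step commutes appropriately with the differential on $(W\otimes A)_{\mathrm{bas}\,\mathfrak{k}}$; once this is established, combining with the Mathai--Quillen--Kalkman identification yields the claimed isomorphism $H^*_\mathfrak{k}(A)\cong H^*(A_{\mathrm{bas}\,\mathfrak{k}})$.
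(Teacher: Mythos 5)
Your outline is correct and is essentially the standard argument from the reference the paper cites: the paper gives no proof of its own here, simply pointing to \cite[Section~5.1]{guillemin1999supersymmetry}, and that source proceeds exactly as you do --- identifying the Cartan complex with the basic subcomplex of $W(\mathfrak{k})\otimes A$ via the Mathai--Quillen--Kalkman isomorphism and then using the connection elements (with the $\mathfrak{k}$-invariance of their span supplied by the type (C) hypothesis) together with Koszul acyclicity of the Weil algebra to retract onto $A_{\mathrm{bas}\,\mathfrak{k}}$. No discrepancy with the paper's (cited) proof.
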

\begin{proof}
	\cite[Section~5.1]{guillemin1999supersymmetry}.
\end{proof}

A proof of the following proposition can be found in \cite[Section~4.6]{guillemin1999supersymmetry} or \cite[Proposition~3.9]{goertsches2010equivariant}.

\begin{proposition}\label{prop commuting actions}
Let $A$ be an $(\mathfrak{h} \times \mathfrak{k})$-dga with $A_k = 0$ for $k < 0$, which
is of type (C) as an $\mathfrak{h}$-dga. If either $A^\mathfrak{k} = A$ or $\mathfrak{k}$ is the Lie algebra of the compact connected Lie group $K$ and the $\mathfrak{k}$-dga structure on $A$ stems from a $K^*$-algebra structure, then
\[H^*_{\mathfrak{h} \times \mathfrak{k}}(A) = H^*_\mathfrak{k} (A_{\mathrm{bas}\, \mathfrak{h}})\]
as $S(\mathfrak{k}^*)$-algebras. The isomorphism is induced by the natural inclusion of complexes
\[\left( \left( S(\mathfrak{k}^*) \otimes A_{\mathrm{bas}\, \mathfrak{h}}\right)^{\mathfrak{k}}, d_\mathfrak{k}\right) \hookrightarrow  \left( \left( S(\mathfrak{k}^*)\otimes S(\mathfrak{h}^*)\otimes A\right)^{\mathfrak{k}\times \mathfrak{h}}, d_
{\mathfrak{k}\times \mathfrak{h}}\right).\]
\end{proposition}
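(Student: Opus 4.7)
My plan is to prove this by identifying the Cartan complex for $\mathfrak{h}\times\mathfrak{k}$ with an iterated Cartan construction and then transporting the type-(C) quasi-isomorphism across the outer $\mathfrak{k}$-layer via a spectral sequence.

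\textbf{Step 1: Iterated Cartan model.} Let $\{X_i\}$ be a basis of $\mathfrak{h}$ with dual $\{u_i\}$, and $\{Y_j\}$ a basis of $\mathfrak{k}$ with dual $\{v_j\}$. Because the $\mathfrak{h}$- and $\mathfrak{k}$-actions commute and $S(\mathfrak{k}^*)$ is $\mathfrak{h}$-trivial (and vice versa), one has a natural identification
\[
C_{\mathfrak{h}\times\mathfrak{k}}(A) \;=\; \bigl(S(\mathfrak{h}^*)\otimes S(\mathfrak{k}^*)\otimes A\bigr)^{\mathfrak{h}\times\mathfrak{k}} \;\cong\; \bigl(S(\mathfrak{k}^*)\otimes C_\mathfrak{h}(A)\bigr)^{\mathfrak{k}} \;=\; C_\mathfrak{k}\bigl(C_\mathfrak{h}(A)\bigr),
\]
where $C_\mathfrak{h}(A)$ is regarded as a $\mathfrak{k}$-dga by extending the $\mathfrak{k}$-derivations trivially to $S(\mathfrak{h}^*)$. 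The total differential $d_{\mathfrak{h}\times\mathfrak{k}} = d - \sum u_i \iota_{X_i} - \sum v_j \iota_{Y_j}$ is correspondingly decomposed into an inner $d_\mathfrak{h}$-part and an outer $\mathfrak{k}$-part. Moreover, $A_{\mathrm{bas}\,\mathfrak{h}}$ is itself a $\mathfrak{k}$-dga (the commuting of the two actions implies $\iota_{Y_j}$, $L_{Y_j}$ and $d$ all preserve it), and the inclusion
\[
j\colon \;A_{\mathrm{bas}\,\mathfrak{h}} \;\hookrightarrow\; C_\mathfrak{h}(A)
\]
is a morphism of $\mathfrak{k}$-dgas that corresponds, after applying $C_\mathfrak{k}(\,\cdot\,)$, to the inclusion described in the statement.

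\textbf{Step 2: Quasi-isomorphism of the inner layer.} Because $A$ is of type (C) as an $\mathfrak{h}$-dga, Proposition~\ref{prop H G = H(bas) if of type C} applied to the $\mathfrak{h}$-action gives that $j$ induces an isomorphism $H^*(A_{\mathrm{bas}\,\mathfrak{h}}) \xrightarrow{\sim} H^*_\mathfrak{h}(A) = H^*(C_\mathfrak{h}(A))$. In other words, $j$ is a $\mathfrak{k}$-equivariant quasi-isomorphism of $\mathfrak{k}$-dgas.

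\textbf{Step 3: Transport across $C_\mathfrak{k}$.} Filter both $C_\mathfrak{k}(A_{\mathrm{bas}\,\mathfrak{h}})$ and $C_\mathfrak{k}(C_\mathfrak{h}(A)) = C_{\mathfrak{h}\times\mathfrak{k}}(A)$ by $S(\mathfrak{k}^*)$-polynomial degree. The associated $E_0$-differential is purely the inner differential ($d$ on $A_{\mathrm{bas}\,\mathfrak{h}}$ in the first case, $d_\mathfrak{h}$ on $C_\mathfrak{h}(A)$ in the second), since the contraction terms $v_j \iota_{Y_j}$ raise $S(\mathfrak{k}^*)$-degree. On $E_1$ one finds $(S(\mathfrak{k}^*)\otimes H^*(A_{\mathrm{bas}\,\mathfrak{h}}))^\mathfrak{k}$ and $(S(\mathfrak{k}^*)\otimes H^*(C_\mathfrak{h}(A)))^\mathfrak{k}$ respectively, provided that taking $\mathfrak{k}$-invariants commutes with cohomology. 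This is exactly where the hypothesis on $\mathfrak{k}$ enters: if $A^\mathfrak{k}=A$ the functor is the identity, and if $\mathfrak{k}$ comes from a compact connected Lie group $K$ acting compatibly on $A$, then integration over $K$ produces an exact averaging projector onto $\mathfrak{k}$-invariants (as the $\mathfrak{k}$-action on the Cartan complex is locally finite and hence semisimple). By Step~2 the map $C_\mathfrak{k}(j)$ induces an isomorphism on $E_1$, hence on $E_\infty$. The filtrations are regular (bounded below and exhaustive, with finite-dimensional pieces in each total degree once one restricts to a fixed degree), so the spectral sequences converge to the respective cohomologies and $C_\mathfrak{k}(j)$ is a quasi-isomorphism. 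Multiplication by $S(\mathfrak{k}^*)$ is visibly respected.

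\textbf{Main obstacle.} The delicate point is Step~3: one must justify that applying $C_\mathfrak{k}(\,\cdot\,)$ preserves the inner quasi-isomorphism. Concretely, this requires that taking $\mathfrak{k}$-invariants of a $\mathfrak{k}$-equivariant quasi-isomorphism of complexes remains a quasi-isomorphism — a statement that fails for arbitrary Lie algebras but holds under either of the hypotheses imposed, via triviality of the action or via the averaging/Haar-measure argument for compact connected $K$. Everything else is essentially a bookkeeping exercise in the Cartan model.
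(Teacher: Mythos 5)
Your proposal is correct, but it does not follow the route the paper takes: the paper gives no proof of this proposition at all, deferring entirely to \cite[Section~4.6]{guillemin1999supersymmetry} and \cite[Proposition~3.9]{goertsches2010equivariant}. Those references work in the Weil model: they write $C_{\mathfrak{h}\times\mathfrak{k}}(A)$ as the basic subcomplex of $W(\mathfrak{h})\otimes W(\mathfrak{k})\otimes A$, and use the type-(C) hypothesis to produce an explicit (Mathai--Quillen/Koszul) chain homotopy between $(W(\mathfrak{h})\otimes A)_{\mathrm{bas}\,\mathfrak{h}}$ and $A_{\mathrm{bas}\,\mathfrak{h}}$; the two alternative hypotheses on $\mathfrak{k}$ are used there to arrange that this homotopy operator is $\mathfrak{k}$-equivariant (trivially, or by averaging the connection elements over $K$), after which it survives passage to the outer basic/invariant subcomplex with no further argument. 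Your Cartan-model version replaces that explicit homotopy by the iterated identification $C_{\mathfrak{h}\times\mathfrak{k}}(A)\cong C_\mathfrak{k}(C_\mathfrak{h}(A))$ plus a spectral-sequence comparison, and it correctly isolates the two points where the hypotheses enter: exactness of the $\mathfrak{k}$-invariants functor (trivial action or compact averaging) to compute $E_1$, and the assumption $A_k=0$ for $k<0$ to make the $S(\mathfrak{k}^*)$-degree filtration finite in each total degree and hence convergent. Two small remarks: the filtration pieces need not be \emph{finite-dimensional} in a fixed total degree --- what you actually need and have is that the filtration \emph{stabilizes} in each total degree; and you are implicitly using that the isomorphism of Proposition \ref{prop H G = H(bas) if of type C} is induced by the inclusion $A_{\mathrm{bas}\,\mathfrak{h}}\hookrightarrow C_\mathfrak{h}(A)$, which is true in the cited source but worth stating, since your whole Step 3 hinges on $j$ being a $\mathfrak{k}$-equivariant quasi-isomorphism rather than merely on the abstract equality of cohomologies. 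With those caveats the argument is complete; the trade-off is that the Weil-model proof avoids spectral sequences entirely at the cost of the Mathai--Quillen machinery, while yours is self-contained in the Cartan model.
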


\subsection{Equivariant basic Cohomology}\label{subsec eq bas coh}
Recall that we consider a connected, compact $K$-contact manifold $(M,\alpha)$ with Reeb vector field $\xi$, on which a torus $G$ acts in such a way that it preserves the contact form $\alpha$, i.e., $g^*\alpha = \alpha$ for every $g \in G$. We denoted the Reeb flow by $\{\psi_t\}$.
	We can not only consider the $G$-action on $M$, obtaining the equivariant cohomology of the $G$-action as $H^*_G(M) = H^*_\g(\Omega(M))$,
	but we can also consider the $G\times \{\psi_t\}$-action on $M$ which induces a $\g \times \R \xi$-dga structure on $\Omega(M)$. This yields $H^*_{\g \times \R \xi}(M)$. Furthermore, by Lemma \ref{lem Omega(M,F) G* alg}, $\Omega(M,\CF)$ is a $G^*$-algebra (and especially a $\g$-dga). Set $C_G(M, \CF)= C_\g(\Omega(M, \CF))$. We obtain the \emph{equivariant basic cohomology} of the $G$-action on $(M,\alpha)$ as
\[H_G(M,\CF)=H(C_G(M, \CF), d_\g).\]

Analogously, we can define equivariant basic cohomology for any open or closed $G \times \{\psi_t\}$-invariant submanifold of $M$
or for any foliated manifold $(N,\mathcal{E})$, acted on by a torus $H$ s.t. $\Omega(N,\mathcal{E})$ is an $H^*$-algebra.

Since the $G$-invariant contact form serves as connection element, Proposition \ref{prop commuting actions} directly gives
\begin{proposition}\label{prop H G (M,F)=H Gx xi}
	$H_G(M,\CF)= H_{\mathfrak{g}\times \R \xi}(M)$ as $S(\g^*)$-algebras.
\end{proposition}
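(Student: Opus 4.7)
The strategy is to invoke Proposition \ref{prop commuting actions} with $A = \Omega(M)$, $\mathfrak{h} = \R\xi$, and $\mathfrak{k} = \g$; the entire proof reduces to verifying its hypotheses in this concrete setting, and the clue has already been given just before the statement.

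First I would check that $\Omega(M)$ is naturally a $(\g\times\R\xi)$-dga. The $G$-action and the Reeb flow commute at the level of vector fields: Equation \eqref{eq dg reeb=reeb} gives $[X_M,\xi]=0$ for every $X\in\g$, and hence the operators $\iota_X$, $\mathcal{L}_X$ for $X\in\g$ satisfy the required (anti)commutation relations with $\iota_\xi$ and $\mathcal{L}_\xi$. Since $G$ is a compact connected torus acting on $M$ by diffeomorphisms, the $\g$-dga structure on $\Omega(M)$ is induced by a $G^*$-algebra structure (pullback), so the second alternative in the hypothesis of Proposition \ref{prop commuting actions} is fulfilled.

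Second I would verify that $\Omega(M)$ is of type (C) as an $\R\xi$-dga. This is the only step where the contact structure is used: the contact form $\alpha$ satisfies $\iota_\xi\alpha = 1$ by the defining property of the Reeb vector field and $\mathcal{L}_\xi\alpha = 0$ by Equation \eqref{eq lie xi}, so $\alpha$ is a connection element whose one-dimensional span is automatically $\R\xi$-invariant.

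Third, I would identify the $\R\xi$-basic subcomplex of $\Omega(M)$ with $\Omega(M,\CF)$. A form $\omega$ satisfies $\iota_\xi\omega = \mathcal{L}_\xi\omega = 0$ if and only if $\iota_{f\xi}\omega = \mathcal{L}_{f\xi}\omega = 0$ for every $f\in C^\infty(M)$, using $\iota_{f\xi} = f\iota_\xi$ and $\mathcal{L}_{f\xi}\omega = f\mathcal{L}_\xi\omega + df\wedge\iota_\xi\omega$; since $\X(\CF) = C^\infty(M)\cdot\xi$, this gives $\Omega(M)_{\mathrm{bas}\,\R\xi} = \Omega(M,\CF)$ as $G^*$-algebras. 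Feeding these identifications into Proposition \ref{prop commuting actions} yields the $S(\g^*)$-algebra isomorphism $H^*_{\g\times\R\xi}(M) = H^*_{\g\times\R\xi}(\Omega(M)) \cong H^*_\g(\Omega(M,\CF)) = H^*_G(M,\CF)$. There is no real obstacle here: the entire substance of the proposition is the observation that the $G$-invariant contact form plays the role of an $\R\xi$-connection, reducing equivariant cohomology of the enlarged Lie algebra to equivariant basic cohomology.
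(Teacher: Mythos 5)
Your proof is correct and follows exactly the paper's route: the paper's entire argument is the one-line observation that the $G$-invariant contact form serves as connection element so that Proposition \ref{prop commuting actions} applies with $\mathfrak{h}=\R\xi$, $\mathfrak{k}=\g$, and $\Omega(M)_{\mathrm{bas}\,\R\xi}=\Omega(M,\CF)$. You have merely spelled out the hypothesis-checking that the paper leaves implicit.
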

\begin{example}\label{ex xi free S1 action}
	Suppose that $\xi$ induces a free $S^1$-action. In this case, $\{\psi_t\}=S^1=T$ and $\pi \colon M \rightarrow M/\{\psi_t\} =: B$ is a $G$-equivariant principal $S^1$-bundle. The pullback gives an isomorphism $\pi^* : \Omega(B) \rightarrow \Omega(M, \CF)$ and we have $H_G(M, \CF) = H_G(B)$ (compare~\cite[Example~3.14]{goertsches2010equivariant}). 
\end{example}

\begin{lemma}\label{lem only 1 g tilde}
	Assume $G$ acts on  
	a $G\times T$-invariant submanifold $U \subset M$ with only one $\widetilde \g_x=\widetilde \g_U$; then $H^*_G(U, \CF)=S(\widetilde \g_U^*)\otimes H_{\mathfrak{k}}^*(U, \CF) = S(\widetilde \g_U^*)\otimes H^*(\Omega(U, \CF)_{ \mathrm{bas } \mathfrak{k}})$, where $\mathfrak{k}$ denotes a complement of $\widetilde \g_U $ in $\g$.
\end{lemma}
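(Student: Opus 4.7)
The plan is to split $\g = \widetilde{\g}_U \oplus \mathfrak{k}$ and show that both the invariants functor and the Cartan differential factor along this decomposition, reducing the problem to a standard type~(C) argument for the $\mathfrak{k}$-piece. The key geometric input is that since $\widetilde{\g}_x = \widetilde{\g}_U$ throughout $U$, every $X \in \widetilde{\g}_U$ satisfies $X_M = f_X\,\xi$ on $U$ for some smooth function $f_X$, so $X_M \in \X(\CF|_U)$. Consequently, on $\Omega(U,\CF)$ the operators $\iota_X$ and $\mathcal{L}_X$ for $X \in \widetilde{\g}_U$ vanish identically, so the $\widetilde{\g}_U$-part of the $\g$-dga structure on $\Omega(U,\CF)$ is trivial.

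Choose bases $\{X_i\}$ of $\widetilde{\g}_U$ and $\{Y_j\}$ of $\mathfrak{k}$ with dual coordinates $\{u_i\}$ and $\{v_j\}$, and use $S(\g^*) = S(\widetilde{\g}_U^*) \otimes S(\mathfrak{k}^*)$. Since $G$ is a torus, the coadjoint action on $S(\g^*)$ is trivial, so the $\g$-invariance in
\[ C_G(U,\CF) = (S(\g^*) \otimes \Omega(U,\CF))^\g \]
reduces to $\mathcal{L}_X$-invariance of the $\Omega$-factor for $X \in \g$; and for $X \in \widetilde{\g}_U$ this is automatic, leaving only $\mathfrak{k}$-invariance. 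The Cartan differential $d_\g = d - \sum_i u_i \iota_{X_i} - \sum_j v_j \iota_{Y_j}$ collapses to $d_\mathfrak{k}$ because the $\widetilde{\g}_U$-contractions kill everything in $\Omega(U,\CF)$. Thus $C_G(U,\CF)$ is isomorphic, as a cochain complex, to $(S(\widetilde{\g}_U^*), 0) \otimes (C_\mathfrak{k}(\Omega(U,\CF)), d_\mathfrak{k})$, and since $S(\widetilde{\g}_U^*)$ is flat, a K\"unneth argument yields the first equality $H^*_G(U,\CF) = S(\widetilde{\g}_U^*) \otimes H^*_\mathfrak{k}(\Omega(U,\CF))$.

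For the second equality I would apply Proposition~\ref{prop H G = H(bas) if of type C}, which requires $\Omega(U,\CF)$ to be a $\mathfrak{k}$-dga of type~(C); this is the main obstacle. To produce connection 1-forms I would invoke Lemma~\ref{lem GxTinv metric} to fix a $G\times T$-invariant contact metric $g$ on $M$ and, for the basis $\{Y_j\}$ of $\mathfrak{k}$, consider the horizontal fundamental vector fields $\bar Y_{j,M} := Y_{j,M} - \alpha(Y_{j,M})\xi \in \Gamma(\ker\alpha|_U)$. Because $\mathfrak{k} \cap \widetilde{\g}_U = \{0\}$, the $\bar Y_{j,M}$ are pointwise linearly independent on all of $U$ (not merely on an open subset — this is exactly where the constant-isotropy hypothesis is essential), so the Gram matrix $(g(\bar Y_{i,M}, \bar Y_{j,M}))$ is invertible and its columns span a smooth subbundle of $\ker\alpha|_U$. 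Inverting this Gram matrix produces 1-forms $\theta_i$ vanishing on $\R\xi$ and on the $g$-orthogonal complement, satisfying $\iota_{Y_j}\theta_i = \delta_{ij}$; they are $\xi$-invariant, and so basic, because $\xi$ is Killing for $g$ (by $K$-contactness) and commutes with each $Y_{j,M}$, and they are $G$-invariant because every ingredient is, so their span is in particular $\mathfrak{k}$-invariant. This certifies type~(C), and Proposition~\ref{prop H G = H(bas) if of type C} then gives $H^*_\mathfrak{k}(\Omega(U,\CF)) = H^*(\Omega(U,\CF)_{\mathrm{bas}\,\mathfrak{k}})$, completing the chain.
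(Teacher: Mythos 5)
Your proof is correct and follows essentially the same route as the paper: the triviality of the $\widetilde{\g}_U$-action on $\Omega(U,\CF)$ splits the Cartan complex as $S(\widetilde{\g}_U^*)\otimes C_{\mathfrak{k}}(\Omega(U,\CF))$ with $d_G = 1\otimes d_{\mathfrak{k}}$, and the second equality comes from the type~(C) property of the free transversal $\mathfrak{k}$-action via Proposition~\ref{prop H G = H(bas) if of type C}. The only difference is that you construct the connection elements explicitly via the Gram matrix of the horizontal fundamental vector fields, where the paper simply invokes the general fact that a free transversal action yields a $\mathfrak{k}$-dga of type~(C); your verification of that step is sound.
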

\begin{proof}
	Since $\widetilde \g_U$ acts trivially on $\Omega (U, \CF)$, we can write the Cartan complex as  $C_G(U, \CF) = S(\widetilde \g_U^*)\otimes S(\mathfrak{k}^*) \otimes \Omega (U, \CF)^{\mathfrak{k}}$ and $d_G = 1 \otimes d_{\mathfrak{k}}$, hence $H^*_G(U, \CF)=S(\widetilde \g_U^*)\otimes H_{\mathfrak{k}}^*(U, \CF)$. But $\mathfrak{k}$ acts freely and in transversal direction on $U$, so $\Omega(U,\CF)$ is a $\mathfrak{k}$-dga of type (C) and $H_{\mathfrak{k}}^*(U, \CF)=  H^*(\Omega(U, \CF)_{ \mathrm{bas } \mathfrak{k}})$ by Proposition \ref{prop H G = H(bas) if of type C}.
\end{proof}

\begin{proposition}[Mayer-Vietoris sequence]\label{prop MVS}
	Let $A\subset M$ be a 
	$G \times T$-invariant submanifold of $M$ and let $U,V\subset A$ be open $G\times T$-invariant subsets such that $U \cup V=A$. Denote the inclusions by $i_U: U \to A$, $i_V:V\to A$, $j_U:U\cap V \to U$, $j_V: U\cap V \to V$. Then there is a long exact sequence
	\[...\! \to H^*_G(A,\CF)\stackrel{i_U^*\oplus i_V^*}{\to} H^*_G(U,\CF)\oplus H_G^*(V,\CF) \stackrel{j_U^* - j_V^*}{\to} H_G^*(U\cap V,\CF) \to H^{*+1}_G(A,\CF)\to \!...\]
\end{proposition}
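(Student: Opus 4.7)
The plan is to mimic the classical Mayer--Vietoris argument at the level of Cartan complexes, producing a short exact sequence
\[
0 \longrightarrow C_G(A,\CF) \stackrel{i_U^*\oplus i_V^*}{\longrightarrow} C_G(U,\CF) \oplus C_G(V,\CF) \stackrel{j_U^*-j_V^*}{\longrightarrow} C_G(U\cap V,\CF) \longrightarrow 0
\]
of $d_\g$-complexes and then invoking the standard zig-zag lemma. The maps are induced by restriction of $G\times T$-invariant basic forms, which clearly commutes with $d$, $\iota_X$ and $\mathcal{L}_X$, so each restriction is a morphism of $\g$-dgas and hence induces a chain map on the Cartan complex. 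Exactness at $C_G(A,\CF)$ and in the middle is formal: a basic form on $A$ that restricts to zero on both $U$ and $V$ vanishes (since $A=U\cup V$), and a pair $(\omega_U,\omega_V)$ mapping to zero agrees on $U\cap V$ and therefore patches to a globally defined basic form on $A$. Both assertions pass unchanged to the Cartan complex, since the constructions are applied coefficient-wise in $S(\g^*)$.

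The only nontrivial step is surjectivity of $j_U^*-j_V^*$, which requires extending a basic equivariant form on $U\cap V$. Here the key input is the existence of a partition of unity $\{\rho_U,\rho_V\}$ subordinate to $\{U,V\}$ with $\rho_U,\rho_V \in \Omega^0(A,\CF)^G$. To produce it, I start with an ordinary smooth partition of unity and average over the compact torus $G\times T$ (recall $T=\overline{\{\psi_t\}}$ lies in the isometry group, and commutes with $G$). The averaged functions are $G$-invariant and $T$-invariant; in particular, the Reeb flow preserves them, so $\iota_\xi\,d\rho_U = \mathcal{L}_\xi \rho_U = 0$, hence $\rho_U,\rho_V$ are basic. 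Given $\omega \in C_G(U\cap V,\CF)$, the form $\rho_V\cdot \omega$ extends by zero to an element of $C_G(U,\CF)$ and $-\rho_U\cdot \omega$ to an element of $C_G(V,\CF)$, and their difference on $U\cap V$ is $(\rho_U+\rho_V)\omega=\omega$. The subtle point to verify is that multiplication by a basic $G$-invariant function preserves the Cartan complex, but this is immediate from $\iota_X(\rho\,\omega)=\rho\,\iota_X\omega$ and $\mathcal{L}_X(\rho\,\omega) = \rho\, \mathcal{L}_X\omega$ for $G$-invariant $\rho$, together with $\iota_\xi\rho = \mathcal{L}_\xi\rho = 0$.

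Having established the short exact sequence, the associated long exact sequence in cohomology is precisely the claimed Mayer--Vietoris sequence, where the connecting homomorphism is constructed by the usual diagram chase with the partition of unity. The main obstacle in the whole argument is purely technical: ensuring that the bump functions used to localize remain basic and $G$-invariant, which is why averaging over the compact torus $G\times T$ (rather than merely over $G$) is essential---without $T$-invariance we could not guarantee that the partition-of-unity functions lie in $\Omega^0(A,\CF)$.
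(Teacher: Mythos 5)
Your proposal is correct and follows essentially the same route as the paper: both proofs construct a $G\times T$-invariant partition of unity subordinate to $\{U,V\}$ (the paper cites \cite[Corollary~B.33]{ggk2002moment}, you average over the compact torus, noting that $T$-invariance is what makes the functions basic) and then run the Bott--Tu short-exact-sequence argument on the Cartan complexes. The paper leaves the details to the references; you have simply written them out, and the verifications (restriction commutes with the $\g$-dga operations, multiplication by an invariant basic function preserves the Cartan complex, extension by zero) are all accurate.
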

\begin{proof}
	Since $U$ and $V$ are $G\times T$-invariant and $G\times T$ is compact, we can find a $G\times T$-invariant partition of unity subordinate to the open cover ${U, V}$ of $A$ (\cite[Corollary~B.33]{ggk2002moment}). Then it can be seen as done in \cite[Proposition~2.3]{bott2013differential} for ordinary differential forms that we have a short exact sequence
	\[0 \to C^*_G(A,\CF)\stackrel{i_U^*\oplus i_V^*}{\to} C^*_G(U,\CF)\oplus C_G^*(V,\CF) \stackrel{j_U^* - j_V^*}{\to} C_G^*(U\cap V,\CF) \to 0. \]
	Thus, we obtain a long exact sequence in equivariant basic cohomology.
\end{proof}

We will not only work with $G \times T$-invariant submanifolds of $(M, \CF)$, but also with the (positive/negative) normal bundles of closed invariant submanifolds with lifted $G\times T$-action. For this reason, we now consider the more general case of a foliated manifold $(N,\mathcal{E})$ that is endowed with a $G\times T$-action such that the fundamental vector field of $\xi \in \mathfrak{t}$ is nowhere vanishing and induces $\mathcal{E}$.
\begin{definition}
	A subset $A \subset N$ is called \emph{$\mathcal{E}$-saturated} if for every $x \in A$, $A$ contains the whole leaf of $\mathcal{E}$ that runs through $x$.
\end{definition}
To prove our main result, we also need relative and compactly supported equivariant basic cohomology. Our assumption on $(N,\mathcal{E})$ means in particular that for any closed $G$-invariant, $\mathcal{E}$-saturated submanifold of $N$, we can find arbitrarily small $G$-invariant, $\mathcal{E}$-saturated tubular neighborhoods. 
\begin{definition}
We denote the subcomplex of compactly supported equivariant basic differential forms by $C_{G,c}(N,\mathcal{E})$, and its cohomology by $H_{G,c}(N,\mathcal{E})=H(C_{G,c}(N,\mathcal{E}), d_\mathfrak{g})$.\end{definition}
\begin{proposition}\label{prop homotopic same cohom}
	Let $A, B \subset N$ be two $G$-invariant, $\mathcal{E}$-saturated submanifolds that are $G \times \{\psi_t\}$-equi\-va\-ri\-antly homotopy equivalent. Then the homotopy inverse maps between $A$ and $B$ induce an isomorphism $H_G(A,\mathcal{E})=H_G(B,\mathcal{E})$.
	If, in addition, the homotopy is proper, the same holds for cohomology with compact support.
\end{proposition}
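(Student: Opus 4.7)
The plan is to adapt the classical de Rham homotopy invariance proof to the equivariant basic setting. Given $G\times\{\psi_t\}$-equivariant homotopy inverses $f: A \to B$ and $g: B \to A$, let $F: A \times [0,1] \to B$ be a smooth $G\times\{\psi_t\}$-equivariant homotopy connecting $g \circ f$ to $\mathrm{id}_A$ (and similarly for the other composition), where $G\times\{\psi_t\}$ acts trivially on the $[0,1]$-factor. I would first verify that an equivariant map $f: A \to B$ induces a chain map $f^*: C_G(B,\mathcal{E}) \to C_G(A,\mathcal{E})$ preserving the basic subcomplex: $G$-equivariance gives $df(X_A)=X_B$ so $f^*$ commutes with $\iota_{X_i}$ and $\mathcal{L}_{X_i}$ and descends to the $\mathfrak{g}$-invariant part of $S(\mathfrak{g}^*)\otimes \Omega$, while $\{\psi_t\}$-equivariance gives $df(\xi_A)=\xi_B$, so pullbacks of basic forms are basic.

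The main step is to construct a chain homotopy $K$ of total degree $-1$ between $f_0^*$ and $f_1^*$. For $\omega\in \Omega(B,\mathcal{E})$ (extended $S(\mathfrak{g}^*)$-linearly), decompose $F^*\omega = \alpha(t) + dt \wedge \beta(t)$ with $\alpha(t),\beta(t)$ families of forms on $A$, and set
\[ K\omega := \int_0^1 \beta(t)\, dt. \]
The classical identity $dK + Kd = f_1^* - f_0^*$ carries over verbatim. To upgrade this to the Cartan differential $d_\mathfrak{g} = d - \sum u_i \iota_{X_i}$, the key observation is that $\iota_{X_i}K + K\iota_{X_i} = 0$: lifting $X_i$ trivially in the $t$-direction on $A\times[0,1]$ and using $\iota_{X_i}(dt\wedge\beta)=-dt\wedge\iota_{X_i}\beta$, $G$-equivariance forces the $dt$-component of $F^*\iota_{X_i}\omega$ to be $-\iota_{X_i}\beta(t)$. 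Combining with the classical identity yields $d_\mathfrak{g} K + K d_\mathfrak{g} = f_1^* - f_0^*$ on the Cartan complex. The analogous computation with $\xi$ in place of $X_i$, using $\{\psi_t\}$-equivariance, shows that $\iota_{\xi_A}\alpha = \iota_{\xi_A}\beta = 0$ and likewise for $\mathcal{L}_{\xi_A}$, so $K\omega$ is basic; hence $K$ restricts to the basic Cartan complex.

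Applying this to both compositions, $g^*f^*$ and $f^*g^*$ equal the identity on $H^*_G(A,\mathcal{E})$ and $H^*_G(B,\mathcal{E})$ respectively, yielding the desired isomorphism. For the compactly supported version, properness of the homotopy $F$ guarantees that $F^*\omega$ has compact support in $A\times[0,1]$ whenever $\omega$ does, and since the fiber $[0,1]$ is compact, the push-forward $K\omega = \int_0^1\beta(t)\,dt$ has compact support in $A$. Thus $K$ restricts to a chain homotopy on $C_{G,c}$ and the same argument yields an isomorphism in compactly supported equivariant basic cohomology.

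The main technical point, and where care is needed, is the sign bookkeeping in verifying $\iota_{X_i}K + K\iota_{X_i}=0$ and confirming that $K$ genuinely lands in the $\mathfrak{g}$-invariant subspace (which follows once one knows $F^*$ is $G$-equivariant and integration over $t$ commutes with the linear $G$-action). The more conceptual obstacle is ensuring that all constructions stay inside the foliation-basic category, which is handled uniformly by the $\{\psi_t\}$-equivariance hypothesis together with the trivial extension of $\xi$ to $A\times[0,1]$; no minimally-degenerate or Morse-theoretic input is needed here.
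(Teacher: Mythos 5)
Your proposal is correct and follows essentially the same route as the paper, which proves this proposition by the standard chain homotopy $\omega \mapsto \int_0^1 \iota_{\partial_t} F^*\omega\, dt$ (citing Bott--Tu and Guillemin--Sternberg and the proof of the relative version, Proposition \ref{prop homotopy equiv relative}); your operator $K$ is exactly this $Q F^*$. The additional verifications you carry out --- anticommutation with $\iota_{X_i}$ so that the homotopy works for $d_\mathfrak{g}$, preservation of basicness via $\{\psi_t\}$-equivariance, and compact supports via properness plus compactness of the fiber $[0,1]$ --- are precisely the details the paper leaves to the reader, and they check out.
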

\begin{proof}
	The proposition is proven analogously to the corresponding statement in ordinary (equivariant) cohomology by constructing a chain homotopy, see, e.g., \cite[\S 4; Cor. 4.1.2]{bott2013differential} and also \cite[Section~2.3.3 and Proposition~ 2.4.1]{guillemin1999supersymmetry} and the proof of Proposition \ref{prop homotopy equiv relative} below.
\end{proof}
\begin{definition}\label{def rel eq coh}
Let $A\subset N$ be any $G$-invariant, $\mathcal{E}$-saturated submanifold. We then consider the complex $C_G(N,A,\mathcal{E}):= C_G(N,\mathcal{E})\oplus C_G(A,\mathcal{E})$ with the grading
$C^k_G(N,A,\mathcal{E}):= C^k_G(N,\mathcal{E})\oplus C^{k-1}_G(A,\mathcal{E})$ and differential $D(\alpha, \beta):= (d_G \alpha, \alpha|_A - d_G\beta)$. The cohomology of this complex is the \emph{relative equivariant basic cohomology} of $(N,A)$ and denoted by $H^*_G(N,A,\mathcal{E})$.
\end{definition}

This definition is based on the definition of ordinary relative de Rham cohomology in \cite[pp.~78-79]{bott2013differential} and an equivariant version thereof in \cite[Section~3.1]{paradanverne2007relative}. We remark that \cite[Section~3.1]{paradanverne2007relative} works analogously for closed submanifolds. Note that a $G\times \{\psi_t\}$-equivariant map of pairs $f: (N,A) \to (\tilde N, \tilde A)$, $f(A) \subset \tilde A$, induces a map $f^*:C_G(\tilde N, \tilde A,\tilde{ \mathcal{E}}) \to C_G(N,A,\mathcal{E}) $, $f^*(\alpha, \beta)=(f^*\alpha, f|_A^* \beta)$ that descends to cohomology.\\
Analogously to the proofs presented in \cite{bott2013differential,paradanverne2007relative}, we obtain the following
\begin{proposition}\label{prop rel cohom LES}
There is a natural long exact sequence in equivariant basic cohomology
\begin{equation} \cdots \stackrel{\alpha^*}{\to} H_G^k(N, A, \mathcal{E}) \stackrel{\beta^*}{\to} H_G^k(N, \mathcal{E}) \stackrel{\iota_A^*}{\to} H_G^k(A, \mathcal{E}) \to \cdots,\label{eq LES rel cohom} \end{equation}
where $\alpha^*(\theta)=(0,\theta)$, $\beta^*(\omega, \theta)=\omega$, and $\iota_A: A \to N$ denotes the inclusion.
\end{proposition}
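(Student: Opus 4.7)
The plan is to produce a short exact sequence of cochain complexes
\[
0 \longrightarrow C_G^{*-1}(A,\mathcal{E}) \xrightarrow{\alpha} C_G^*(N,A,\mathcal{E}) \xrightarrow{\beta} C_G^*(N,\mathcal{E}) \longrightarrow 0,
\]
with $\alpha(\theta) = (0, \theta)$ and $\beta(\omega,\theta) = \omega$, and then to invoke the zig-zag lemma of homological algebra to obtain the long exact sequence. This is the standard mapping-cone derivation used in \cite[\S 6]{bott2013differential} and in the equivariant setting of \cite[Section~3.1]{paradanverne2007relative}; what has to be verified is only that the maps are defined on, and are compatible with the differentials on, the Cartan complex of equivariant basic forms.

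At the level of graded vector spaces, exactness is immediate: $\alpha$ is injective by inspection, its image consists precisely of those pairs whose first coordinate vanishes (which is $\ker\beta$), and $\beta$ is surjective with set-theoretic section $\omega\mapsto(\omega,0)$. Compatibility with the differentials reduces to the defining formula $D(\omega,\theta)=(d_\mathfrak{g}\omega,\iota_A^*\omega - d_\mathfrak{g}\theta)$ of Definition \ref{def rel eq coh}: projecting onto the first factor gives $\beta\circ D=d_\mathfrak{g}\circ\beta$, while $D(0,\theta)=(0,-d_\mathfrak{g}\theta)$ shows that $\alpha$ anticommutes with $D$. The sign is absorbed in the usual way by equipping the shifted complex $C^{*-1}_G(A,\mathcal{E})$ with the differential $-d_\mathfrak{g}$ (equivalently, by replacing $\alpha$ by $-\alpha$), which does not alter the resulting cohomology. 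That $\iota_A^*$ preserves equivariant basic forms is automatic, since $A$ is $G$-invariant and $\mathcal{E}$-saturated, so pullback by the inclusion preserves $G$-invariance, contraction with $\xi$, and the Lie derivative along $\xi$.

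Applying the zig-zag lemma to the resulting short exact sequence of cochain complexes yields the long exact sequence of the statement, with $\beta^*$ the forgetful map and $\alpha^*$ the connecting homomorphism; naturality under $G\times\{\psi_t\}$-equivariant maps of pairs is inherited from the functoriality of the mapping cone. There is no genuine obstacle beyond this bookkeeping. In particular, in contrast to the Mayer--Vietoris argument of Proposition \ref{prop MVS}, no invariant partition of unity is required, because the relative complex is defined as a direct sum rather than as a subcomplex of forms vanishing on $A$; hence extension of equivariant basic forms from $A$ to $N$ never enters the proof.
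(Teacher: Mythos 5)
Your proposal is correct and follows exactly the mapping-cone argument that the paper itself invokes by reference to Bott--Tu, Paradan--Vergne and Weibel: the checks you list (exactness of the sequence of graded vector spaces, $\beta\circ D=d_\mathfrak{g}\circ\beta$, the sign on $\alpha$ absorbed into the shifted complex, and $\iota_A^*$ preserving basic forms because $A$ is $G$-invariant and $\mathcal{E}$-saturated) are precisely the bookkeeping required. One correction: in the short exact sequence $0 \to C_G^{*-1}(A,\mathcal{E}) \xrightarrow{\alpha} C_G^*(N,A,\mathcal{E}) \xrightarrow{\beta} C_G^*(N,\mathcal{E}) \to 0$ both $\alpha$ and $\beta$ are chain maps, so $\alpha^*$ and $\beta^*$ are the two induced maps of the zig-zag lemma, and it is $\iota_A^*$ that arises as the connecting homomorphism --- not $\alpha^*$ as you state; this is seen by lifting a closed $\omega \in C_G^k(N,\mathcal{E})$ to $(\omega,0)$ and computing $D(\omega,0)=(0,\iota_A^*\omega)=\alpha(\iota_A^*\omega)$, which is also the one small computation still needed to confirm that the third arrow of the long exact sequence really is the restriction $\iota_A^*$.
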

\begin{remark}
	The complex $C^k_G(N,A,\mathcal{E})$ is a special case of the more general concept of a \emph{mapping cone} of a map of chain complexes (cf., e.g., \cite[Section~1.5]{weibel1997introduction}). In this context, the previous proposition corresponds to \cite[1.5.1]{weibel1997introduction}.
\end{remark}

The considerations of \cite[Section~3.2]{paradanverne2007relative} carry over to the basic setting so that we also obtain an excision statement for open submanifolds.
\begin{proposition}\label{prop excision}
	Let $A\subset N$ be a $G$-invariant, $\mathcal{E}$-saturated \emph{open} submanifold and $U$ a $G$-in\-va\-ri\-ant, $\mathcal{E}$-saturated open neighborhood of $N \setminus A$. Then the restriction to $(U, U\setminus (N \setminus A))$, $(\alpha, \beta) \mapsto (\alpha|_U, \beta|_{U\setminus(N\setminus A)})$, induces an isomorphism
	\[ H_G^k(N, A, \mathcal{E}) \to H_G^k(U, U\setminus (N \setminus A), \mathcal{E}).\]
\end{proposition}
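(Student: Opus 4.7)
The plan is to follow the classical proof of excision for de Rham cohomology, as in \cite[Chapter~I]{bott2013differential} and the relative equivariant version in \cite[Section~3.2]{paradanverne2007relative}, transposed to the basic equivariant setting. The key topological observation is that $A \cup U = N$ (since $N \setminus A \subset U$) and that both $A$ and $U$ are open and $G \times T$-invariant, so Mayer--Vietoris (Proposition \ref{prop MVS}) applies to the cover $\{A, U\}$ of $N$.

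The concrete approach combines two ingredients. First, the Mayer--Vietoris short exact sequence of complexes
\[0 \to C_G(N, \mathcal{E}) \to C_G(A, \mathcal{E}) \oplus C_G(U, \mathcal{E}) \to C_G(A \cap U, \mathcal{E}) \to 0,\]
obtained from an invariant partition of unity subordinate to $\{A, U\}$. Second, the description from Definition \ref{def rel eq coh} of the relative complexes $C_G(N, A, \mathcal{E})$ and $C_G(U, U \cap A, \mathcal{E})$ as (shifted) algebraic mapping cones of the restriction maps $r_A \colon C_G(N, \mathcal{E}) \to C_G(A, \mathcal{E})$ and $r_{U \cap A} \colon C_G(U, \mathcal{E}) \to C_G(A \cap U, \mathcal{E})$ respectively. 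These two mapping cones fit into a commutative square of restriction maps whose induced map between the cones is precisely the excision map. A direct consequence of Mayer--Vietoris exactness is that the total complex of this square is acyclic, which, by standard homological algebra, implies that the excision map is a quasi-isomorphism. Equivalently, one can set up the ladder of long exact sequences of pairs from Proposition \ref{prop rel cohom LES} and apply the five lemma, with Mayer--Vietoris providing the control over the otherwise non-isomorphic vertical maps on absolute cohomology.

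The hard part is really bookkeeping: tracking signs through the mapping-cone formalism and verifying that the chain homotopies constructed in the diagram chase remain $G$-equivariant, $\mathcal{E}$-basic and well-defined after extension by zero across the closed set $N \setminus A \subset U$. In addition one must ensure the partition of unity invoked in Mayer--Vietoris is simultaneously $G \times T$-invariant and basic; this is automatic, since by compactness of $G \times T$ (cf.\ \cite[Corollary~B.33]{ggk2002moment}) a $G \times T$-invariant partition of unity exists, and any $T$-invariant smooth function is constant on the closures of $\{\psi_t\}$-orbits, hence on the leaves of $\mathcal{E}$, and is therefore basic.
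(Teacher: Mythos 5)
Your argument is correct, but it takes a genuinely different route from the one the paper intends. The paper's proof is a one-line reference to \cite[Section~3.2]{paradanverne2007relative}, whose excision argument is \emph{explicit}: one picks an invariant cutoff function $\chi$ supported in $U$ and equal to $1$ near $N\setminus A$, writes down a chain-level quasi-inverse of the restriction, roughly $(\alpha,\beta)\mapsto(\chi\alpha+d\chi\wedge\beta,\chi\beta)$ extended by zero, and verifies the two chain homotopies by hand; the only thing to check in the basic equivariant setting is that $\chi$ can be chosen $G\times T$-invariant (hence basic) and that multiplication by $\chi$ and $d\chi$ preserves $C_G(\cdot,\mathcal{E})$. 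You instead observe that $A\cup U=N$ with $U\setminus(N\setminus A)=U\cap A$, feed the cover $\{A,U\}$ into the chain-level Mayer--Vietoris sequence of Proposition \ref{prop MVS}, identify the two relative complexes of Definition \ref{def rel eq coh} as mapping cones of the horizontal restrictions in the resulting commutative square, and conclude that the excision map is a quasi-isomorphism because the total complex of the square is acyclic. This is a clean trade: you avoid all explicit homotopy computations at the price of the cone sign bookkeeping, and the analytic input is the same in both proofs (an invariant basic partition of unity, which exists by averaging over the compact group $G\times T$, as you correctly note). Your approach is particularly economical here because Proposition \ref{prop MVS} is already available; one should only add that its proof carries over verbatim from submanifolds of $M$ to the general foliated $(N,\mathcal{E})$ with $G\times T$-action considered in this part of the paper, and that $G$-invariant $\mathcal{E}$-saturated open sets are automatically $G\times T$-invariant (their closed complements are $\{\psi_t\}$-invariant, hence invariant under the closure $T$).

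One caveat: the ``equivalently, apply the five lemma'' phrasing should not be taken literally. In the ladder of long exact sequences of the pairs $(N,A)$ and $(U,U\cap A)$ the four outer vertical maps are the restrictions $H_G^*(N,\mathcal{E})\to H_G^*(U,\mathcal{E})$ and $H_G^*(A,\mathcal{E})\to H_G^*(U\cap A,\mathcal{E})$, which are not isomorphisms, so the five lemma does not apply as stated. What does close the argument is precisely your mapping-cone formulation: the total complex of the square is the cone of the map of horizontal cones and also, up to sign, the cone of the map $\mathrm{Cone}\bigl(C_G(N,\mathcal{E})\to C_G(A,\mathcal{E})\oplus C_G(U,\mathcal{E})\bigr)\to C_G(U\cap A,\mathcal{E})$, and the latter is acyclic exactly because the Mayer--Vietoris sequence is a short exact sequence of complexes. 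Keep that version and drop the five-lemma remark, and the proof is complete.
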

\begin{proposition}\label{prop homotopy equiv relative}
	Let $A\subset N$ be any $G$-invariant, $\mathcal{E}$-saturated submanifold. If the equivariant maps $f: (N, A)\to (\tilde N, \tilde A)$ and $g:(\tilde N, \tilde A)\to (N, A)$ are $G \times \{\psi_t\}$-homotopy inverses, then they induce isomorphisms $f^*$ and $g^*$ in relative equivariant basic cohomology.
\end{proposition}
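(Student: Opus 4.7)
I would mimic the standard proof for ordinary relative de Rham cohomology (compare \cite[pp.~78--79]{bott2013differential}), adapting the chain-homotopy construction already implicit in Proposition~\ref{prop homotopic same cohom} to the relative Cartan complex of Definition~\ref{def rel eq coh}. The idea is: from the $G\times\{\psi_t\}$-equivariant homotopies witnessing that $f$ and $g$ are homotopy inverses, one extracts compatible chain-homotopy operators on the absolute complexes of $N$ and of $A$, and then assembles them into a single chain homotopy on $C_G^*(N,A,\mathcal{E})$.

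\textbf{Step 1 (absolute chain homotopy).} Let $H \colon N \times I \to N$ be a $G\times\{\psi_t\}$-equivariant homotopy between $\mathrm{id}_N$ and $g\circ f$ with $H(A\times I)\subset A$, and write $H^A$ for its restriction to a homotopy of $A$. As in the proof of Proposition~\ref{prop homotopic same cohom}, integration over the fiber of $N\times I \to N$ applied to $H^*$, extended to the Cartan model in the usual way, yields an operator $K^H \colon C_G^*(N,\mathcal{E}) \to C_G^{*-1}(N,\mathcal{E})$ with
\[ d_\g K^H + K^H d_\g \;=\; \mathrm{id} - (g\circ f)^*. \]
This operator is well defined on basic forms because $H$ carries the foliation $\mathcal{E}\times\{0\}$ on $N\times I$ into $\mathcal{E}$ (the Reeb flow equivariance), and the analogous operator $K^{H^A}$ exists on $A$. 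By the naturality of fiber integration with respect to equivariant embeddings, restriction to the invariant $\mathcal{E}$-saturated submanifold $A$ commutes with the homotopy operator:
\[ \bigl(K^H\alpha\bigr)\big|_A \;=\; K^{H^A}\bigl(\alpha|_A\bigr) \qquad \text{for every } \alpha \in C_G^*(N,\mathcal{E}). \]

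\textbf{Step 2 (assembling the relative chain homotopy).} Define $K \colon C_G^k(N,A,\mathcal{E}) \to C_G^{k-1}(N,A,\mathcal{E})$ by
\[ K(\alpha,\beta) \;:=\; \bigl(K^H\alpha,\; -K^{H^A}\beta\bigr). \]
A direct calculation, using the formula $D(\alpha,\beta) = (d_\g\alpha, \alpha|_A - d_\g\beta)$ and the compatibility from Step 1, gives
\[ (DK + KD)(\alpha,\beta) \;=\; \bigl((\mathrm{id}-(g\circ f)^*)\alpha,\; (\mathrm{id}-(g\circ f|_A)^*)\beta\bigr) \;=\; \bigl(\mathrm{id} - (g\circ f)^*_{\mathrm{rel}}\bigr)(\alpha,\beta), \]
where $(g\circ f)^*_{\mathrm{rel}} = g^*\circ f^*$ on the relative complex. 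An entirely symmetric construction using a $G\times\{\psi_t\}$-equivariant homotopy $\tilde H$ on $(\tilde N, \tilde A)$ between $\mathrm{id}_{\tilde N}$ and $f\circ g$ produces a chain homotopy showing $f^*\circ g^* \simeq \mathrm{id}$ on $C_G^*(\tilde N, \tilde A, \tilde{\mathcal{E}})$. Passing to cohomology, $f^*$ and $g^*$ are mutually inverse isomorphisms.

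\textbf{Main obstacle.} The only nontrivial point is verifying that the fiber-integration operator $K^H$ descends to basic forms and is natural with respect to restriction to the $G$-invariant, $\mathcal{E}$-saturated submanifold $A$. Equivariance of $H$ under $\{\psi_t\}$ ensures that $H$ sends leaves of $\mathcal{E}\times\{0\}$ into leaves of $\mathcal{E}$, so that $H^*$ preserves basicness; the fact that $H$ maps $A\times I$ into $A$ then gives the identity $(K^H\alpha)|_A = K^{H^A}(\alpha|_A)$. Once this compatibility is in place, the remainder of the argument is a direct algebraic verification in the relative complex, structurally identical to the classical mapping-cone homotopy invariance statement \cite[1.5.1]{weibel1997introduction}.
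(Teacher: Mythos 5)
Your proposal is correct and follows essentially the same route as the paper: both construct the absolute chain homotopy operator by fiber integration over $N\times I$ (the paper's $QF^*$ is your $K^H$) and then deduce $f^*g^*=\operatorname{id}$ on the relative complex. The only difference is that you explicitly assemble the relative chain homotopy $K(\alpha,\beta)=(K^H\alpha,-K^{H^A}\beta)$ and verify $DK+KD=\operatorname{id}-(g\circ f)^*_{\mathrm{rel}}$, a step the paper leaves implicit; your sign and the compatibility $(K^H\alpha)|_A=K^{H^A}(\alpha|_A)$ are both correct.
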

\begin{proof}
	Consider an equivariant homotopy $F: N \times I \to N$, $F(\cdot, 0)=g \circ f$, $F(\cdot, 1)=\operatorname{id}_N$ such that $F(A\times I)\subset A$. Then $F|_{A\times I}$ is a homotopy between $g \circ f|_A$ and $\operatorname{id}_A$. With $Q: C^k_G(N\times I, \mathcal{E})\to C^{k-1}_G(N, \mathcal{E})$, $\alpha \mapsto \int_0^1 \iota_{\partial_t} \alpha \ dt$, we then obtain (cf. \cite[\S~4]{bott2013differential} and \cite[Section~2.3.3]{guillemin1999supersymmetry})
	\begin{align}
		d_G QF^* + QF^*d_G = \operatorname{id}_N^*-f^*g^*. \label{eq chain homotopy}
	\end{align}
	With Equation \eqref{eq chain homotopy}, we can then show that $\operatorname{id}_N^*=f^*g^*$ in relative equivariant basic cohomology. Analogously, we obtain $\operatorname{id}_{\tilde N}^*=g^*f^*$ in relative equivariant basic cohomology, which yields that $f^*$ and $g^*$ are isomorphisms in relative equivariant basic cohomology.
\end{proof}
Note that the proofs (cf. also \cite{paradanverne2007relative}) of the previous propositions \ref{prop rel cohom LES}-\ref{prop homotopy equiv relative}
carry over to manifolds $N$ with invariant boundary and $A \subset N$ invariant open submanifold with invariant boundary, as long as the closure of $N\setminus A$ admits arbitrarily small invariant tubular neighborhoods. Propositions \ref{prop rel cohom LES} and \ref{prop homotopy equiv relative} 
also hold for manifolds $N$ with invariant boundary and $A \subset N$ invariant closed submanifold that is either $A\subset \mathrm{int}N$ or $A=\partial N$.\\
For compact manifolds $N$ and closed $G\times T$-invariant submanifolds $A\subset N$ (without boundary or with boundary as above), we have an alternative definition of relative cohomology (cf. \cite[Chapter~11.1]{guillemin1999supersymmetry}).
\begin{definition}\label{def alt rel cohom}
	Let $(N, \mathcal{E})$ be a compact foliated manifold with $G \times T$-action such that $\xi$ is nowhere vanishing and induces $\mathcal{E}$. Let $A\subset N$ be a closed $G\times T$-invariant submanifold. Assume that either $N$ is a manifold without boundary or that $N$ is a manifold with boundary such that $\partial N$ is $G\times T$-invariant, admits arbitrarily small invariant tubular neighborhoods and $A \subset \mathrm{int}N$ or $A=\partial N$. We define the complex $\widetilde C_G(N, A, \mathcal{E})$ to be the kernel of the pullback $C_G(N,\mathcal{E}) \to C_G(A, \mathcal{E})$. Since the pullback commutes with the differential, $\widetilde C_G(N, A, \mathcal{E})$ is a differential subcomplex of $C_G(N, \mathcal{E})$. We denote its cohomology by $\widetilde H_G(N, A, \mathcal{E})$.
\end{definition}
\begin{proposition}\label{prop rel cohom are isom}
	The map $\varphi: \widetilde C_G^k(N, A, \mathcal{E}) \to  C_G^k(N, A, \mathcal{E})$, $\omega \mapsto (\omega, 0)$ induces an isomorphism in cohomology.
\end{proposition}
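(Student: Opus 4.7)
The plan is to realize $\varphi$ as the first map in a short exact sequence of chain complexes
$$0 \longrightarrow \widetilde C_G^*(N,A,\mathcal{E}) \xrightarrow{\varphi} C_G^*(N,A,\mathcal{E}) \longrightarrow Q^* \longrightarrow 0$$
and then to show that the quotient $Q^*$ is acyclic; the long exact sequence in cohomology will then force $\varphi$ to be a quasi-isomorphism.

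First I would verify that $\varphi$ is a chain map, which is immediate: for $\omega \in \widetilde C_G^k(N,A,\mathcal{E})$ one has $\omega|_A = 0$, so $D(\omega, 0) = (d_G \omega,\, \omega|_A) = (d_G\omega, 0) = \varphi(d_G \omega)$.

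The central technical step is an extension lemma: the pullback $\iota_A^* : C_G^*(N, \mathcal{E}) \to C_G^*(A, \mathcal{E})$ is surjective. Here I would invoke the hypothesis that $A$ admits arbitrarily small $G \times T$-invariant tubular neighborhoods. Pick such a neighborhood $U$ equipped with a $G \times T$-equivariant retraction $r: U \to A$, and let $\theta$ be an equivariant basic form on $A$. Then $r^*\theta$ is defined on $U$ and remains basic: $T$-equivariance of $r$ gives $dr(\xi) = \xi$ (since the Reeb vector field is itself the fundamental vector field of $\xi \in \mathfrak{t}$), hence $\iota_\xi r^*\theta = r^*\iota_\xi \theta = 0$ and similarly $\mathcal{L}_\xi r^*\theta = 0$. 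Choosing a $G \times T$-invariant cutoff $\rho$ supported in $U$ and identically equal to $1$ near $A$ (which exists by averaging an ordinary cutoff over the compact group $G \times T$), the form $\rho \cdot r^*\theta$ extends by zero to all of $N$ and is still basic since $\xi(\rho) = 0$ by $T$-invariance of $\rho$. Applying this coefficient-wise in the Cartan complex produces an element of $C_G^*(N, \mathcal{E})$ restricting to $\theta$.

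With surjectivity of $\iota_A^*$ established, the quotient $Q^*$ is identified with $C_G^*(A,\mathcal{E}) \oplus C_G^{*-1}(A,\mathcal{E})$ via $(\omega, \beta) \mapsto (\omega|_A, \beta)$: surjectivity of this assignment uses the extension lemma, and its kernel is exactly $\varphi(\widetilde C_G^*(N,A,\mathcal{E}))$. The induced differential on $Q^*$ works out to $(\theta, \beta) \mapsto (d_G \theta,\, \theta - d_G \beta)$. A closed element $(\theta, \beta)$ then satisfies $d_G\theta = 0$ and $\theta = d_G \beta$, so it equals the boundary of $(\beta, 0)$; hence $Q^*$ is acyclic, and the long exact cohomology sequence of the above short exact sequence yields that $\varphi$ induces an isomorphism $\widetilde H_G^*(N, A, \mathcal{E}) \xrightarrow{\cong} H_G^*(N, A, \mathcal{E})$.

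The main obstacle I expect is the extension lemma: one must ensure that the construction simultaneously respects $G$-invariance (to land in the Cartan complex), $T$-invariance (to preserve basicity), and admits a smooth cutoff near $A$. Having a retraction equivariant under both factors of $G \times T$ is essential so that $r^*$ preserves both the Cartan structure and basicity, and it is the compactness of $G \times T$ that makes the averaging arguments producing invariant bump functions and invariant extensions go through.
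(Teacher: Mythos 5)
Your proof is correct, and it takes a genuinely different route from the paper at the decisive step. Both arguments hinge on the same technical input, namely the extension lemma: using an invariant tubular neighborhood $\pi\colon U \to A$ and an invariant cutoff $f$, the form $f\,\pi^*\theta$ extends any $\theta \in C_G^*(A,\mathcal{E})$ to $N$, so $\iota_A^*$ is surjective on cochains (your additional care that $r^*\theta$ stays basic and that $\xi(f)=0$ is exactly what makes this work in the foliated setting). From there the paper feeds this surjection into the short exact sequence $0 \to \widetilde C_G^* \to C_G^*(N,\mathcal{E}) \to C_G^*(A,\mathcal{E}) \to 0$, obtains a second long exact sequence, and then compares it with the long exact sequence of Proposition \ref{prop rel cohom LES} via a five-term ladder whose commutativity requires explicitly computing the connecting homomorphism $\partial\theta = df\wedge\pi^*\theta$; the five-lemma then gives the isomorphism. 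You instead place $\varphi$ itself into a short exact sequence of complexes, identify the quotient with the mapping cone $\bigl(C_G^*(A,\mathcal{E})\oplus C_G^{*-1}(A,\mathcal{E}),\ (\theta,\beta)\mapsto(d_G\theta,\,\theta-d_G\beta)\bigr)$ of the identity, and observe that it is acyclic since any cocycle $(\theta,\beta)$ with $\theta=d_G\beta$ is the boundary of $(\beta,0)$. Your version is arguably cleaner: it avoids both the computation of the boundary operator and the sign bookkeeping in the ladder, at the cost of having to verify that the quotient map is well defined and surjective (which is again the extension lemma). The paper's version has the advantage of reusing the already-established long exact sequence and of exhibiting the compatibility of the two relative theories degree by degree, which it implicitly relies on later when passing between them.
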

\begin{proof}
	The map $\varphi: \widetilde C_G^k(N, A, \mathcal{E}) \to  C_G^k(N, A, \mathcal{E})$, $\omega \mapsto (\omega, 0)$ satisfies $D\circ \varphi = \varphi \circ d_G$ and, hence, induces a map in cohomology.\\
	Let $\pi: U \to A$ denote a $G\times T$-invariant tubular neighborhood and $f: N \to \mathbb{R}$ an invariant function with $\operatorname{supp} f \subset U$ and $f|_{\widetilde U} \equiv 1$ on a smaller neighborhood $\widetilde U$ of $A$. Then $\omega := f \pi^* \theta$ extends $\theta$ to $N$. The inclusion $\iota^*:\widetilde C_G^k(N, A, \mathcal{E}) \to C_G^k(N, \mathcal{E})$ is obviously injective. Hence, with $\iota_A^*$ denoting the pullback to $A$, we have a short exact sequence
	\[ 0\to \widetilde C_G^k(N, A, \mathcal{E}) \stackrel{\iota^*}{\to} C_G^k(N, \mathcal{E}) \stackrel{\iota_A^*}{\to} C_G^k(A, \mathcal{E}) \to 0,\] that induces a long exact sequence in cohomology: 
	\begin{equation} \cdots \to \widetilde H_G^k(N, A, \mathcal{E}) \to H_G^k(N, \mathcal{E}) \to H_G^k(A, \mathcal{E}) \to \cdots.\label{eq LES alternative rel cohom} \end{equation}
	Consider the following diagram, where the two horizontal sequences are sections of the two long exact sequences \ref{eq LES rel cohom} and \ref{eq LES alternative rel cohom} and, hence, exact.
	\[\begin{xy}
	\xymatrix@-0pt{
	H_G^{k-1}(N, \mathcal{E}) \ar[r]^{\iota_A^*} \ar[d]^{\operatorname{-id}} & H_G^{k-1}(A, \mathcal{E})\ar[r]^{\partial} \ar[d]^{\operatorname{-id}} & \widetilde H_G^k(N, A, \mathcal{E}) \ar[r]^{\iota^*} \ar[d]^{\varphi} & H_G^k(N, \mathcal{E}) \ar[r]^{\iota_A^*} \ar[d]^{\operatorname{id}} & H_G^k(A, \mathcal{E}) \ar[d]^{\operatorname{id}}\\
	H_G^{k-1}(N, \mathcal{E}) \ar[r]^{\iota_A^*} & H_G^{k-1}(A, \mathcal{E})\ar[r]^{\alpha^*} & H_G^k(N, A, \mathcal{E}) \ar[r]^{\beta^*} & H_G^k(N, \mathcal{E}) \ar[r]^{\iota_A^*} & H_G^k(A, \mathcal{E})\\
	}
	\end{xy}\]
	We want to apply the 5-Lemma. The leftmost square and the two squares on the right obviously commute. We show the commutativity of the remaining square; since $\pm \operatorname{id}$ is an isomorphism, the 5-lemma then yields that $\varphi$ is an isomorphism, as well. First, we determine the boundary operator $\partial$. Let $\theta$ represent a class in $H_G^{k-1}(A, \mathcal{E})$ and let $f$ be a cutoff function as above. Then it can easily be seen that $\partial \theta = df \wedge \pi^* \theta$. Further, we have $D(f\pi^* \theta,0)=(df\wedge \pi^* \theta,(f\pi^* \theta)|_A)= (df\wedge \pi^* \theta, \theta)$, so $(df\wedge \pi^* \theta,0)$ and $(0,- \theta)$ represent the same relative cohomology class. It follows that $\alpha^*(-\operatorname{id}(\theta))=(0,- \theta)= (df\wedge \pi^* \theta,0)= \varphi \circ \partial (\theta)$, the diagram commutes. 
\end{proof}

The proof of \cite[Theorem~11.1.1]{guillemin1999supersymmetry} is equally applicable in the basic (see also \cite{CF17loc}) and boundary setting so that we obtain
\begin{proposition}\label{prop rel cohom and cohom comp supp isom}
	Let $(N,A,\mathcal{E})$ be as in Definition \ref{def alt rel cohom}. The natural inclusion map $C_{G,c}(N\setminus A,\mathcal{E}) \to \widetilde C_G(N,A,\mathcal{E})$ induces an isomorphism in cohomology.
\end{proposition}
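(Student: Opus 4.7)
The plan is to adapt the proof of \cite[Theorem 11.1.1]{guillemin1999supersymmetry} to the basic and boundary setting via a Poincar\'e-type lemma in an invariant tubular neighborhood of $A$. The natural inclusion is well-defined because a compactly supported equivariant basic form on $N\setminus A$ extends by zero to a smooth equivariant basic form on $N$ that vanishes in a neighborhood of $A$, in particular pulling back to $0$ on $A$. To establish that the inclusion is a quasi-isomorphism, I would construct, for each closed class in $\widetilde H_G(N,A,\mathcal{E})$, an explicit representative that vanishes in a neighborhood of $A$, and similarly upgrade any primitive of a compactly supported form to a compactly supported primitive.

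By the hypotheses of Definition \ref{def alt rel cohom}, $A$ admits arbitrarily small $G\times T$-invariant tubular neighborhoods $\pi \colon U \to A$, which I would realize as an equivariant vector bundle with zero section $A$; in the case $A = \partial N$ the corresponding object is a $G\times T$-invariant collar $\partial N \times [0,\varepsilon)$. Since $A$ is $T$-invariant, $\xi$ is tangent to $A$, and one can arrange the bundle-theoretic scaling flow $\rho_t$ and its infinitesimal generator (the radial vector field $R$) to be $G \times T$-invariant with $[\xi, R] = 0$; consequently $\iota_R$ and $\rho_t^*$ preserve both $G$-invariance and the basic conditions $\iota_\xi = 0$, $\mathcal{L}_\xi = 0$, so they act on $C_G(U,\mathcal{E})$. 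The standard homotopy operator
\[
    K\omega \;=\; \int_0^1 \rho_t^*(\iota_R \omega)\, \tfrac{dt}{t},
\]
which converges on forms pulling back to zero on $A$, then satisfies the key identity $d_G K + K d_G = \operatorname{id} - \pi^* \iota_A^*$ on $C_G(U, \mathcal{E})$.

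For surjectivity at the level of cohomology, given a closed $\omega \in \widetilde C_G^k(N, A, \mathcal{E})$, the identity above together with $\iota_A^* \omega = 0$ yields $\omega|_U = d_G(K(\omega|_U))$; choosing an invariant cutoff $\chi$ that equals $1$ in a smaller neighborhood $V$ of $A$ with $\operatorname{supp}\chi\subset U$, the form $\omega - d_G\bigl(\chi \cdot K(\omega|_U)\bigr)$ (with $\chi \cdot K(\omega|_U)$ extended by zero outside $U$) lies in $\widetilde C_G^k(N,A,\mathcal{E})$, is cohomologous to $\omega$, and vanishes on $V$, hence comes from a compactly supported form on $N\setminus A$. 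For injectivity, if $\omega = d_G \eta$ with $\omega \in C_{G,c}(N\setminus A, \mathcal{E})$ (extended by zero) and $\eta \in \widetilde C_G^{k-1}(N, A, \mathcal{E})$, the same cutoff procedure applied to $\eta$ produces a primitive compactly supported on $N\setminus A$. The main technical hurdle is to verify carefully that $K$ preserves the basic condition and the $G$-invariance, and that the boundary case (notably $A = \partial N$, where $R$ points in the inward collar direction) requires no essential modification; both reduce to the $G\times T$-equivariance of the chosen normal bundle or collar together with the identity $[\xi, R] = 0$, which holds because $T$ preserves the bundle structure and $\xi$ is tangent to $A$.
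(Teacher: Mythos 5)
Your argument is correct and is essentially the paper's own proof: the paper simply cites \cite[Theorem~11.1.1]{guillemin1999supersymmetry} as "equally applicable in the basic and boundary setting," and that proof is precisely the tubular-neighborhood/radial-homotopy-operator-plus-cutoff argument you spell out, with the same verification that $G\times T$-invariance of the tubular neighborhood (or collar) makes $K$ preserve invariance and the basic conditions. Your only cosmetic imprecision is that the integral defining $K$ converges for all smooth forms, not only those pulling back to zero on $A$, since $\rho_t^*(\iota_R\omega)$ is $O(t)$ because $R$ vanishes along the zero section.
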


 Now, consider a $G\times T$-invariant closed submanifold $A \subset M$ of codimension $d$. Let $p: \nu A \to A$ denote the projection of the normal bundle.
A \emph{basic equivariant Thom form} is a closed form $\tau \in C^d_{G,c}(\nu A, \CF)$ satisfying $p_\ast \tau = 1$, with $p_\ast: C_{G,c}^k(\nu A,\CF) \to C_G^{k-d}(A,\CF)$ denoting fibrewise integration. A basic equivariant Thom form can be constructed analogously to \cite[Chapter~10]{guillemin1999supersymmetry} with an invariant basic connection form (see also \cite{CF17loc}). Note that a $G$-invariant basic connection form $\theta$ has to exist: By \cite[Proposition~2.8]{molino1988riemannian}, there always exists a connection that is \emph{adapted} to the lifted foliation, i.e., such that the tangent spaces to the leaves are horizontal. Since $G\times T$ is compact, we can obtain a $G\times T$-invariant adapted connection form by averaging over the group. But this connection form then has to be basic, or, as Molino calls it, \emph{projectable}.\label{existence basic connection}\\
Analogously, we can restrict ourselves to $\nu^\pm A$ instead if $A$ is a non-degenerate submanifold.\\[10pt]
In Section \ref{subsec cont mom map}, we scrutinized the functions $\Psi^{X_{s+1}}|_{Y_s}$ and the connected components $N$ of their critical sets. Recall that every $N$ is a $G \times T$-invariant closed submanifold of even codimension (cf. Lemma \ref{lem tot geod}) and non-degenerate (cf. Proposition \ref{prop Hess formula}). 
We will now consider the special case that  $A=N$. Denote the Morse index of $\Psi^{X_{s+1}}|_{Y_s}$ on $N$ by $\lambda$, the inclusion as the zero section $N \to \nu^\pm N$ by $\iota^\pm$ and the projection by $p^\pm: \nu^\pm N\rightarrow N$. 
For the following definition, compare \cite[Section~A.1]{GNTlocalization}.
\begin{definition}
	Let $k$ denote the rank of the (positive/negative) normal bundle $\nu^{(\pm)} N$. Then the bundle $P$ of oriented orthonormal frames of $(\nu^{(\pm)} N, \CF)$ is a foliated $SO(k)$-bundle over $N$. The equivariant basic Euler form $e_G(\nu^{(\pm)} N, \CF)\in C_G^*(N, \CF)$ of  $(\nu^{(\pm)} N, \CF) \rightarrow (N, \CF)$ is defined by
	\[e_G(\nu^{(\pm)} N, \CF)(X)=\operatorname{Pf}(F^\theta_G(X))=\operatorname{Pf}(F^\theta -\iota_X \theta), \]
	where $\theta \in \Omega^1(P, \CF)^G\otimes \mathfrak{so}(k)$ denotes a $G$-invariant basic connection form  on P, $F^\theta_G= d_G\theta + \tfrac{1}{2} [\theta,\theta]$ its equivariant curvature form and $\operatorname{Pf}$ the Pfaffian. 
	
	For any $G \times \{\psi_t\}$-invariant connection form, we can analogously define the equivariant Euler form $e_{\g \times \R \xi}(\nu^{(\pm)} N) \in C_{\g \times \R \xi}(N)$ or, for a $G\times T$-invariant connection form, the equivariant Euler form $e_{G \times T}(\nu^{(\pm)} N)$ $\in C_{G \times T}(N)$.
\end{definition}

Note that, while the Euler \emph{form} depends of the choice of connection form, its class (for which we use the same notation) does not. We can think of $e_{\g \times \R \xi}(\nu^{(\pm)} N)$ as the restriction of the polynomial map $e_{G \times T}(\nu^{(\pm)} N)$ to $\g \times \R \xi$.

\begin{proposition}\label{prop euler equal}
	Under the $S(\g^*)$-algebra isomorphism $H_G(M,\CF)= H_{\mathfrak{g}\times \R \xi}(M)$ of Proposition \ref{prop H G (M,F)=H Gx xi}, 
	\[e_G(\nu^{(\pm)} N, \CF)=e_{\g \times \R \xi}(\nu^{(\pm)} N).\]
\end{proposition}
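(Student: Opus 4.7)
The plan is to exhibit a single connection form on the orthonormal frame bundle $P$ of $\nu^{(\pm)}N$ that simultaneously computes both Euler forms, and then to check that the two resulting Cartan representatives match under the inclusion of complexes inducing the isomorphism of Proposition \ref{prop H G (M,F)=H Gx xi}.

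First I would pick $\theta \in \Omega^1(P)\otimes \mathfrak{so}(k)$ to be a $G$-invariant and basic (i.e.\ projectable) connection form. Such a $\theta$ exists by the averaging/Molino construction recalled on page~\pageref{existence basic connection}: start from a $G\times T$-invariant adapted connection and observe that adapted plus invariant forces projectability. Since being basic means $\iota_\xi \theta = 0$ and $\mathcal{L}_\xi \theta = 0$ (for the lift of $\xi$ to $P$), this same $\theta$ is automatically $G\times\{\psi_t\}$-invariant (and in fact $G\times T$-invariant), so it is an admissible connection form for the definitions of $e_G(\nu^{(\pm)}N,\CF)$ and of $e_{\g\times\R\xi}(\nu^{(\pm)}N)$.

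Next I would compare the equivariant curvatures. By construction, $F^\theta = d\theta + \tfrac12[\theta,\theta]$ is $G\times\{\psi_t\}$-invariant and basic. For $X\in\g$ and $t\in\R$, the Cartan-model equivariant curvatures are
\[
F^\theta_G(X)=F^\theta-\iota_X\theta, \qquad F^\theta_{\g\times\R\xi}(X+t\xi)=F^\theta-\iota_X\theta-t\,\iota_\xi\theta=F^\theta-\iota_X\theta,
\]
the last equality using $\iota_\xi\theta=0$. Taking Pfaffians, the two Euler representatives $\operatorname{Pf}(F^\theta-\iota_X\theta)$ are literally the same polynomial on $\g$, with the $\g\times\R\xi$-version trivially independent of the $\R\xi$-variable.

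Finally I would invoke the shape of the isomorphism in Proposition \ref{prop H G (M,F)=H Gx xi}, which comes from the inclusion of complexes of Proposition \ref{prop commuting actions},
\[
\bigl(S(\g^*)\otimes \Omega(N,\CF)\bigr)^{\g}\hookrightarrow \bigl(S(\g^*)\otimes S((\R\xi)^*)\otimes \Omega(N)\bigr)^{\g\times\R\xi}.
\]
Under this inclusion the class of $\operatorname{Pf}(F^\theta-\iota_X\theta)\in C_G(N,\CF)$ is sent to the class of the same expression, regarded as a $\g$-equivariant polynomial map with values in basic (hence ordinary) forms on $N$ and constant in the $\R\xi$-direction; this is precisely a representative of $e_{\g\times\R\xi}(\nu^{(\pm)}N)$. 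There are no real obstacles here; the only step that one must be slightly careful about is justifying the existence of a single connection $\theta$ that is simultaneously $G$-invariant and basic, which is the reason the construction of basic Thom forms was recorded earlier and why the argument is essentially immediate once that input is in place.
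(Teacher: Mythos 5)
Your argument is correct and is exactly the approach the paper takes: the paper's proof is the one-line remark that the claim ``becomes evident when regarding the inclusion of complexes that induces the isomorphism,'' and your proposal simply spells out the details --- choosing a single $G$-invariant basic connection form $\theta$, observing $\iota_\xi\theta=0$ so that both equivariant curvatures give the same Pfaffian, and tracking the representative through the inclusion of Cartan complexes. Nothing is missing.
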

\begin{proof}
	This becomes evident when regarding the inclusion of complexes that induces the isomorphism.
\end{proof}

Analogously to \cite[Theorem~10.6.1]{guillemin1999supersymmetry}, we obtain
\begin{theorem}[Basic equivariant Thom isomorphism]\label{theorem thom iso}
	Integration over the fiber defines an isomorphism
	\[p^-_*: H^{* + \lambda}_{G,c}(\nu^- N, \CF) \rightarrow H^*_G(N,\CF) \]
	whose inverse is given by the composition
	\[\iota^-_*:H^*_G(N,\CF) \xrightarrow{(p^-)^*}H^{* }_G(\nu^- N, \CF)\xrightarrow{\wedge \tau} H^{* + \lambda}_{G,c}(\nu^- N, \CF).\]
	
	As in \cite[Section~10.5]{guillemin1999supersymmetry}, it can be shown that $(\iota^-)^*\tau=e_G(\nu^- N,\CF) $ and, hence, that the composition $(\iota^-)^*\iota^-_*=\wedge e_G(\nu^- N,\CF)$ is the multiplication with the basic equivariant Euler class of $\nu^- N$. 
	
	The analogous statements hold for the whole and the positive normal bundle, with $\lambda$ replaced by $\operatorname{rank}(\nu N)$ and $\operatorname{rank}(\nu N)-\lambda$, respectively.	
\end{theorem}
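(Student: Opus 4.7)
My plan is to adapt the proof of \cite[Theorem~10.6.1]{guillemin1999supersymmetry} to the basic equivariant setting, now that the basic equivariant Thom form $\tau \in C^d_{G,c}(\nu^- N, \CF)$ has been constructed on page \pageref{existence basic connection} using a $G$-invariant basic connection form. My first task is to verify that fibrewise integration $p^-_*: C_{G,c}^{*+\lambda}(\nu^- N, \CF) \to C_G^*(N, \CF)$ is a well-defined chain map in this setting: the basic condition is preserved because the horizontal--vertical splitting induced by a basic connection is itself basic, and the Cartan differential $d_G = d - \sum_i u_i \iota_{X_i}$ commutes with $p^-_*$ since the $G$-fundamental vector fields on $\nu^- N$ project along $p^-$ to those on $N$.

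One direction is then essentially formal. For $\omega \in C_G^*(N,\CF)$, the projection formula yields
\[p^-_*\bigl((p^-)^*\omega \wedge \tau\bigr) \;=\; \omega \wedge p^-_*\tau \;=\; \omega,\]
since $p^-_*\tau = 1$ by the defining property of the Thom form. Passing to cohomology, this gives $p^-_* \circ \iota^-_* = \operatorname{id}$ on $H^*_G(N,\CF)$.

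The main obstacle is the opposite composition $\iota^-_* \circ p^-_* = \operatorname{id}$ on $H^{*+\lambda}_{G,c}(\nu^- N, \CF)$. The projection $p^-$ and the zero-section inclusion $\iota^-$ are $G \times \{\psi_t\}$-equivariant homotopy inverses via the scaling retraction $F_s(v) = sv$, so by Proposition \ref{prop homotopic same cohom} the map $(p^-)^*: H^*_G(N,\CF) \to H^*_G(\nu^- N, \CF)$ is an isomorphism. Combined with the previous step, this forces $\iota^-_* = \wedge \tau \circ (p^-)^*$ to be a two-sided inverse of $p^-_*$, provided $\wedge \tau: H^*_G(\nu^- N,\CF) \to H^{*+\lambda}_{G,c}(\nu^- N,\CF)$ is itself an isomorphism. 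To establish this, I would identify $H^{*+\lambda}_{G,c}(\nu^- N,\CF)$ with the relative cohomology of the closed disk bundle modulo its sphere bundle via Proposition \ref{prop rel cohom and cohom comp supp isom}, run a Mayer--Vietoris argument (Proposition \ref{prop MVS}) over a $G \times T$-invariant cover on which the foliated bundle trivialises, and reduce to the fibre case. The fibre case then reduces to the classical Thom isomorphism for $\R^\lambda$, which is standard.

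Finally, to compute $(\iota^-)^*\tau$ I would follow the explicit Mathai--Quillen-type construction of $\tau$ as in \cite[Section~10.5]{guillemin1999supersymmetry}, whose only non-vanishing contribution at the zero section is precisely the equivariant Pfaffian $\operatorname{Pf}(F^\theta - \iota_X \theta)$, i.e., the basic equivariant Euler form $e_G(\nu^- N, \CF)$. The identity $(\iota^-)^*\iota^-_* = \wedge e_G(\nu^-N,\CF)$ then follows by direct substitution. The same argument applies verbatim to $\nu N$ (with $\lambda$ replaced by $\operatorname{rank}(\nu N)$) and to $\nu^+ N$ (with $\lambda$ replaced by $\operatorname{rank}(\nu N) - \lambda$), since all three bundles carry the same type of basic equivariant structure.
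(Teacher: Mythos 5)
Your overall architecture is the one the paper intends: the paper's ``proof'' of this theorem is literally the citation ``analogously to \cite[Theorem~10.6.1]{guillemin1999supersymmetry}'', and your first half is exactly the standard opening move. That $p^-_*$ preserves basicness and commutes with $d_G$ (because the lifted fundamental vector fields, including the lift of $\xi$, are $p^-$-projectable), and that the projection formula together with $p^-_*\tau=1$ gives $p^-_*\circ\iota^-_*=\operatorname{id}$, is correct; so is delegating $(\iota^-)^*\tau=e_G(\nu^-N,\CF)$ to the Mathai--Quillen construction, which the statement itself does. Using Proposition \ref{prop homotopic same cohom} to see that $(p^-)^*$ is an isomorphism is also fine, since the paper extends its framework to normal bundles with the lifted foliation.

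The gap is in your argument that $\wedge\tau$ is surjective in cohomology (equivalently, that $p^-_*$ is injective), which is where the entire content of the theorem sits. You propose a Mayer--Vietoris induction ``over a $G\times T$-invariant cover on which the foliated bundle trivialises'', reducing to ``the classical Thom isomorphism for $\R^\lambda$''. Such a cover does not exist: a $G\times T$-invariant open subset of $N$ is a union of whole orbits, whose isotropy groups are positive-dimensional (the orbits are $(s+1)$-dimensional in a torus of dimension at least $r+1$), and by Lemma \ref{lem no non-zero fixed vectors} these isotropy groups act on the fibres of $\nu^{\pm}N$ with no non-zero fixed vectors; hence $\nu^-N$ is never equivariantly a product with trivial fibre action over an invariant open set. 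The correct base case is an invariant tubular neighbourhood of a single $G\times T$-orbit, where the bundle is an associated bundle $(G\times T)\times_{(G\times T)_x}V$ for a fixed-point-free representation $V$, and one must establish the Thom isomorphism for that local model directly --- for instance by splitting off the polynomial part of the Cartan complex as in Lemma \ref{lem only 1 g tilde} and reducing to an ordinary Thom isomorphism for the $\mathfrak{k}$-basic subcomplex --- rather than quoting the classical statement for $\R^\lambda$. A second, smaller issue: on the pieces of the cover the relevant complex is that of \emph{vertically} compactly supported forms, which agrees with $C_{G,c}$ only on the total space (because $N$ is compact), and Proposition \ref{prop MVS} gives Mayer--Vietoris for neither compactly supported nor relative basic equivariant cohomology, so those exact sequences would also need to be supplied. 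Both points are repairable, but as written the reduction step would fail.
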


\section{Basic Kirwan Surjectivity}\label{sec basic kirwan surj}
We will now proceed to state and prove our main result.

\begin{theorem}\label{theorem main}
Let $(M, \alpha)$ be a compact $K$-contact manifold, $\xi$ its Reeb vector field and $\CF$ the foliation that is induced by $\xi$. Let $G$ be a torus that acts on $M$, preserving $\alpha$. Denote with $\Psi \colon M \rightarrow \g^*$ the contact moment map and suppose that $0$ is a regular value of $\Psi$. Then the inclusion $\Psi^{-1}(0)\subset M$ induces an epimorphism in equivariant basic cohomology
\[ H^*_G(M,\CF) \longrightarrow H^*_G(\Psi^{-1}(0), \CF).\]
\end{theorem}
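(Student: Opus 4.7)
The plan is to prove the theorem by induction along the filtration $M = Y_0 \supset Y_1 \supset \cdots \supset Y_r = \Psi^{-1}(0)$ coming from Proposition \ref{prop basis K-contact}, showing at each stage that the restriction $H^*_G(Y_s, \CF) \to H^*_G(Y_{s+1}, \CF)$ is surjective; composing these $r$ surjections yields the theorem. For the inductive step I would work with $\phi := \Psi^{X_{s+1}}|_{Y_s}$, a $G \times T$-invariant Morse-Bott function by Proposition \ref{prop Hess formula} on which $0$ is a regular value with $\phi^{-1}(0) = Y_{s+1}$; the $\CF$-saturated neighborhood $V_\varepsilon := \phi^{-1}((-\varepsilon, \varepsilon))$ retracts $G \times \{\psi_t\}$-equivariantly to $Y_{s+1}$ (Proposition \ref{prop homotopic same cohom}), and $J$ acts as a complex structure on both $\nu^+ N$ and $\nu^- N$ for each critical component $N$ (Remark following Proposition \ref{prop Hess formula}).

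The heart of the argument is a Morse-theoretic induction on sublevel sets $Y_s^{\le c} := \phi^{-1}((-\infty, c])$, performed in both directions. Away from critical values, an equivariantly modified negative gradient flow provides the necessary $G\times T$-equivariant homotopy equivalences. At a critical value $c$ with critical components $\{N\}$ of Morse indices $\{\lambda_N\}$, excision (Proposition \ref{prop excision}) combined with the basic equivariant Thom isomorphism (Theorem \ref{theorem thom iso}) identifies
\begin{equation*}
H^*_G(Y_s^{\le c+\varepsilon}, Y_s^{\le c-\varepsilon}, \CF) \;\cong\; \bigoplus_{N} H^{*-\lambda_N}_G(N, \CF),
\end{equation*}
with the map into $H^*_G(Y_s^{\le c+\varepsilon}, \CF)$ restricting back to $N$ as multiplication by the basic equivariant Euler class $e_G(\nu^- N, \CF)$. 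Plugging this into the long exact sequence of the pair (Proposition \ref{prop rel cohom LES}), surjectivity of $H^*_G(Y_s^{\le c+\varepsilon}, \CF) \to H^*_G(Y_s^{\le c-\varepsilon}, \CF)$ reduces to $e_G(\nu^- N, \CF)$ being a non-zero divisor in $H^*_G(N, \CF)$. Applying the identical argument to $-\phi$ yields the analogous statement for $e_G(\nu^+ N, \CF)$; a Mayer-Vietoris argument (Proposition \ref{prop MVS}) with open cover $U = \phi^{-1}((-\infty, \varepsilon))$ and $V = \phi^{-1}((-\varepsilon, \infty))$, where $U \cap V = V_\varepsilon$ retracts onto $Y_{s+1}$, then combines the two one-sided surjections into the desired epimorphism $H^*_G(Y_s, \CF) \twoheadrightarrow H^*_G(Y_{s+1}, \CF)$.

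The essential technical ingredient is therefore the non-zero-divisor property of $e_G(\nu^\pm N, \CF)$. By Lemma \ref{lem crit= min Gpsi (GT)-orbits} each critical component $N$ is a union of minimal $G \times T$-orbits with a single generalized isotropy algebra $\widetilde{\g}_N$ of dimension $r-s$, so Lemma \ref{lem only 1 g tilde} gives a free-module decomposition
\begin{equation*}
H^*_G(N, \CF) \;\cong\; S(\widetilde{\g}_N^*) \otimes H^*_{\mathfrak{k}}(N, \CF).
\end{equation*}
The action of $\widetilde{\g}_N$ on $\nu^\pm N$ has no zero weights, since any zero-weight direction would enlarge the orbit and contradict the minimality of $\dim G \times T \cdot x$ on $N$. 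Using the complex structure $J$ and the identification of basic and $\g \times \R\xi$-equivariant Euler classes from Proposition \ref{prop euler equal}, a Chern-Weil computation expresses the leading term of $e_G(\nu^\pm N, \CF)$ in the $S(\widetilde{\g}_N^*)$-factor as the nonzero product of these weights. Since $S(\widetilde{\g}_N^*)$ is an integral domain and $H^*_G(N, \CF)$ is free over it, multiplication by $e_G(\nu^\pm N, \CF)$ is injective.

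I expect the main obstacle to lie precisely in this Chern-Weil computation in the basic, transverse-equivariant setting: one must verify that the weights of $\widetilde{\g}_N$ on $\nu^\pm N$ are nonzero as linear forms on $\widetilde{\g}_N$ itself (rather than only on $\widetilde{\g}_N \oplus \R\xi$), and that the basic Euler class has the expected leading term in $S(\widetilde{\g}_N^*)$. Once this is in hand, assembling the Morse-theoretic surjections in both directions, the Mayer-Vietoris step at the regular level $0$, and the outer induction on $s$ produces the desired epimorphism $H^*_G(M, \CF) \to H^*_G(\Psi^{-1}(0), \CF)$.
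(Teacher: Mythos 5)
Your proposal is correct and follows essentially the same route as the paper: the same filtration $Y_0\supset\cdots\supset Y_r$, the same one-sided Morse-theoretic surjections obtained from the long exact sequence of the pair plus the Thom isomorphism and injectivity of multiplication by the Euler class of $\nu^\pm N$, and the same Mayer--Vietoris assembly at the regular level $0$. The ``main obstacle'' you single out --- that the weights of the isotropy action on $\nu^\pm N$ be nonzero on the relevant (generalized) isotropy algebra so that the leading polynomial term of the Euler class is a non-zero divisor --- is precisely what the paper settles in Lemma \ref{lem no non-zero fixed vectors} (via the geodesic argument and minimality of the $G\times\{\psi_t\}$-orbits) and Lemma \ref{lem wedge E injective} (via the Pfaffian computation in $H_{\g\oplus\R\xi}(B_i^\kappa)=S((\g\oplus\R\xi)_{B_i^\kappa}^*)\otimes H_{\mathfrak k}(B_i^\kappa)$, which is the $\g\oplus\R\xi$-equivariant counterpart of your $S(\widetilde\g_N^*)$-decomposition).
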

\begin{proof}
 Choose a metric $g$ adapted to $\alpha$ according to Lemma \ref{lem GxTinv metric}. Let $(X_1, ..., X_r)$ be a basis of $\mathfrak{g}$ according to Proposition \ref{prop basis K-contact}. Let again $f_s :=(\Psi^{X_1}, ... , \Psi^{X_s}) \colon M \rightarrow \R^s$, $Y_0 := M$ and $Y_s := f_s^{-1}(0)$ for $s=1, ..., r$. By Proposition \ref{prop Hess formula}, the functions $\Psi^{X_{s+1}}|_{Y_s}$ are Morse-Bott functions. 
We will show that the restrictions to the subsets $Y_{s+1} \subset Y_s$ induce the following sequence of epimorphisms:
\[ H^*_G(M, \CF)=H^*_G(Y_{0}, \CF) \to H^*_G(Y_{1}, \CF) \rightarrow ... \rightarrow H^*_G(Y_{r}, \CF)= H^*_G(\Psi^{-1}(0), \CF).\]

Set $Y_s^c := \left(\Psi^{X_{s+1}}|_{Y_s}\right)^{-1}((- \infty, c])$. Let $\kappa$ be a critical value of $\Psi^{X_{s+1}}|_{Y_s}$. We denote with $B^\kappa_{1}, ..., B^\kappa_{j_\kappa}$ the connected components of the critical set at level $\kappa$ and with $\lambda^\kappa_{i}$ the indices of the non-degenerate critical submanifolds $B^\kappa_i$ with respect to  $\operatorname{Hess}(\Psi^{X_{s+1}}|_{Y_s})$ and with $\nu^\pm B^\kappa_i$ their positive resp. negative normal bundles. 

Recall the following definition (cf. \cite[pp.~146f]{wasserman1969equivariant}).
\begin{definition}
	Let $V, W$ be Riemannian $G\times T$-vector bundles over $B^\kappa_i$. We denote their disk bundles by $D$, their open unit ball bundles by $\mathring{D}$ and their sphere bundles by $S$. The bundle $D^V \oplus D^W = \{(v,w)\in V \oplus W \mid ||v||\leq 1, ||w|| \leq 1 \}$ is called a \emph{handle bundle of type $(V,W)$} with index equal to the rank of $W$. Let $N \subset \tilde N$ be $G\times T$-manifolds with boundary, and $H \subset \tilde N$ a closed subset. We write 
	$\tilde N = N \cup_{D^V\oplus S^W} H$ and say that $\tilde N$ arises from $N$ by attaching a handle bundle of type $(V,W)$ if
	\begin{enumerate}[(i)]
		\item $\tilde F: D^V \oplus D^W \to H \subset \tilde N$ is a homeomorphism onto $H$,
		\item $\tilde N = N \cup H$,
		\item $\tilde F|_{D^V\oplus S^W}$ is an equivariant diffeomorphism onto $H \cap \partial N$,
		\item $\tilde F|_{D^V\oplus \mathring{D}^W}$ is an equivariant diffeomorphism onto $\tilde N \setminus N$.
	\end{enumerate}
\end{definition}
Let $\epsilon$ be small enough such that the interval $[\kappa - \epsilon, \kappa + \epsilon]$ contains no critical values of $\Psi^{X_{s+1}}|_{Y_s}$ besides $\kappa$. Since $\Psi^{X_{s+1}}|_{Y_s}$ is a $G \times T$-invariant Morse-Bott function, $Y_s^{\kappa + \epsilon}$ is $(G\times T)$-equivariantly diffeomorphic to $Y_s^{\kappa - \epsilon}$ with $j_\kappa$ handle bundles of type $(\nu^+B^\kappa_i, \nu^-B^\kappa_i)$ disjointly attached, cf. \cite[Theorem~4.6]{wasserman1969equivariant}.
\begin{align}
Y_s^{\kappa + \epsilon} \simeq Y_s^{\kappa - \epsilon} \cup_{\bigcup D^{\nu^+B^\kappa_i}\oplus  S^{\nu^- B^\kappa_{i}} } \bigcup D^{\nu^+B^\kappa_i}\oplus D^{\nu^-B^\kappa_{i}}. \label{eq homotopy type}
\end{align}

Here, the $G\times T$-action on $\nu^\pm B^\kappa_i$ is the natural lift of the $G \times T$-action on $M$. We denote the foliation induced by $\xi$ on the normal bundle also by $\CF$.
Let $U^\kappa _i$ denote an invariant tubular neighborhood of $ D^{\nu^+B^\kappa_i}\oplus  D^{\nu^- B^\kappa_{i}}$. By Diffeomorphism \eqref{eq homotopy type} and Proposition \ref{prop homotopy equiv relative}, we have
\begin{align}
 H_G^*(Y_s^{\kappa + \epsilon}, &Y_s^{\kappa - \epsilon}, \CF) =H_G^*(Y_s^{\kappa - \epsilon} 
 \cup_{\cup D^{\nu^+B^\kappa_i}\oplus  S^{\nu^- B^\kappa_{i}} } D^{\nu^+B^\kappa_i}\oplus D^{\nu^-B^\kappa_{i}}, Y_s^{\kappa - \epsilon}, \CF) \hspace{-41pt}&  \notag\\
 &= H^*_{G}(\cup \ U^\kappa_i , \cup \ U^\kappa_i \setminus D^{\nu^+B^\kappa_i}\oplus \mathring{D}^{\nu^-B^\kappa_{i}}, \CF) &\textrm{(by Prop. \ref{prop excision})} \notag\\
 &=H^*_{G}(\cup \ D^{\nu^+B^\kappa_i}\oplus  D^{\nu^- B^\kappa_{i}} , \cup \ D^{\nu^+B^\kappa_i}\oplus  S^{\nu^- B^\kappa_{i}}, \CF) &\textrm{(by Prop. \ref{prop homotopy equiv relative})} \notag\\
 &=H^*_{G}(\cup \  D^{\nu^- B^\kappa_{i}} , \cup \  S^{\nu^- B^\kappa_{i}}, \CF) &\textrm{(by Prop. \ref{prop homotopy equiv relative})} \notag\\
 &= \bigoplus H^*_{G}( D^{\nu^- B^\kappa_{i}} , S^{\nu^- B^\kappa_{i}}, \CF) & \notag\\
 &= \bigoplus \widetilde H^*_{G}( D^{\nu^- B^\kappa_{i}} , S^{\nu^- B^\kappa_{i}}, \CF)  &\textrm{(by Prop. \ref{prop rel cohom are isom})} \notag\\
 &= \bigoplus H^*_{G,c}( \mathring{D}^{\nu^- B^\kappa_{i}}, \CF) &\textrm{(by Prop. \ref{prop rel cohom and cohom comp supp isom})} \notag
\end{align}
Consider the $G\times T$-equivariant diffeomorphism $\rho: \mathring{D}^{\nu^- B^\kappa_{i}} \to \nu^- B^\kappa_{i}$, $v \mapsto \tfrac{1}{1-||v||^2}v$. Since $\rho$ is proper, Proposition \ref{prop homotopic same cohom} yields
\begin{align}
	H_G^*(Y_s^{\kappa + \epsilon}, Y_s^{\kappa - \epsilon}, \CF) &=\bigoplus H^*_{G,c}(\nu^- B^\kappa_i, \CF).\label{eq rel coh Ys is equal coh nu- comp supp}
\end{align}

By the Thom isomorphism (Theorem \ref{theorem thom iso}), we have further
\begin{align}
 H^{*-\lambda^\kappa_i}_G(B^\kappa_i, \CF) \stackrel{\sim}{\longrightarrow}H^*_{G,c}(\nu^- B^\kappa_i, \CF). \label{eq thom}
\end{align}

With 
$(G \times \{\psi_t\})_{B_i^\kappa}$ 
we denote the isotropy group of 
$G \times \{\psi_t\}$ 
on $B^\kappa_i$. Since $\Psi$ and $g$ are $G \times T$-invariant, $(G \times \{\psi_t\})_{B_i^\kappa}$ acts fiberwise on $\nu^-B_i^\kappa$ (and $D^{\lambda^\kappa_{i}} B^\kappa_i$) by restriction of the isotropy representation. We need the following lemmata.
\begin{lemma}\label{lem no non-zero fixed vectors}
	$\nu^{(\pm)} B_i^\kappa$ has no non-zero $(G \times \{\psi_t\})_{B_i^\kappa}$-fixed vectors.
\end{lemma}
\begin{proof}
For $x \in B_i^\kappa$, let $\gamma_v$ be the unique geodesic with initial values $\gamma_v(0)=x, \dot \gamma_v (0) = v$, $v \in \nu^{(\pm)}_x B_i^\kappa$. Since $G \times \{\psi_t\}$ acts by isometries, $g\cdot \gamma_v$ is again a geodesic and, by uniqueness, $g\cdot \gamma_v = \gamma_{dg(v)}$ for all $g \in (G \times \{\psi_t\})_{B_i^\kappa}$. Assume $v$ to be a $(G \times \{\psi_t\})_{B_i^\kappa}$-fixed vector. Then $g \cdot \gamma_v = \gamma_{dg(v)}=\gamma_v$ for all $g \in (G \times \{\psi_t\})_{B_i^\kappa}$, hence the isotropy group of all points along $\gamma_v$ contains $(G \times \{\psi_t\})_{B_i^\kappa}$. By Lemma \ref{lem crit= min Gpsi (GT)-orbits}, however, the critical set is the union of all minimal $G\times \{\psi_t\}$-orbits, hence $\gamma_v$ lies completely in the connected component $B_i^\kappa$. Thus $v = \dot \gamma_v (0) \in T_x B_i^\kappa \ \bot \ \nu_x B_i^\kappa$, therefore $v=0$.
\end{proof}

It follows that $\nu^\pm B_i^\kappa$ has no non-zero $(G \times T)_{B_i^\kappa}$-fixed vectors, therefore, the  multiplication with the Euler classes of the negative, positive or whole normal bundle in $H^{*}_{G\times T}(B_i^\kappa)$ is injective (see \cite[Proposition~5]{duflot1983smooth} or \cite[\S~13]{atiyah1983yang}). 
We now show that this also holds for their restriction to $\g \times \R \xi$. 
\begin{lemma}\label{lem wedge E injective}
	Multiplication in $H^{*}_{\g \oplus \R \xi}(B_i^\kappa)$ with the  
	equivariant Euler class of the negative, positive or whole normal bundle of $B_i^\kappa$ is injective.
\end{lemma}
\begin{proof}
We present the proof for the case of the negative normal bundle, the other cases work analogously. Denote the Euler class of $\nu^-B_i^\kappa $ by $E_i^\kappa$. 
Let $\theta$ denote a $G\times T$-invariant connection 1-form in the bundle $P$ of oriented orthonormal frames of the negative normal bundle over $B_i^\kappa$. 
Then by definition, for $X\in \g\oplus \R \xi$, $E^\kappa_i(X)$ is given by 
$\mathit{Pf}(F^\theta -  \iota_{X} \theta)$, where we denote the Pfaffian $\in S(\mathfrak{so} (\lambda_i^\kappa)^* )^{SO( \lambda_i^\kappa ) }$ by $\mathit{Pf}$. The classification of irreducible torus representations yields that $\nu^-B_i^\kappa$ splits into 2-dimensional subbundles s.t., when written in a basis adapted to the splitting, the $(\g \oplus \mathfrak{t})_{B_i^\kappa}$-action is given by the standard action of the 
matrix 
\begin{align} \left( \begin{smallmatrix}
   	0 & -\alpha_1(X) & & &\\
   	\alpha_1(X) & 0 & & &\\
   	 & & \ddots & &\\
   	  & & & 0 & -\alpha_{\lambda_i^\kappa /2}(X)\\
   	  & & & \alpha_{\lambda_i^\kappa /2}(X) & 0
   \end{smallmatrix}\right), \quad X \in (\g \oplus \mathfrak{t})_{B_i^\kappa} \label{eq weight matrix}\end{align}
with the weights $\alpha_1, ..., \alpha_{\lambda_i^\kappa /2}$ of the $(\g \oplus \mathfrak{t})_{B_i^\kappa}$-representation. For every $X\in (\g \oplus \mathfrak{t})_{B_i^\kappa}$, Matrix \eqref{eq weight matrix} is an element of $\mathfrak{so}(\lambda_i^\kappa)$. Thus, $X_P$ and the $SO$-fundamental vectorfield generated by Matrix \eqref{eq weight matrix} coincide. By the definition of a connection form, $\theta (Y_P) = Y$ for every $Y \in \mathfrak{so}$. Therefore, it holds for every $X \in  (\g \oplus \mathfrak{t})_{B_i^\kappa}$ that
\begin{equation*}
	\iota_X \theta = \left( \begin{smallmatrix}
   	0 & -\alpha_1(X) & & &\\
   	\alpha_1(X) & 0 & & &\\
   	 & & \ddots & &\\
   	  & & & 0 & -\alpha_{\lambda_i^\kappa /2}(X)\\
   	  & & & \alpha_{\lambda_i^\kappa /2}(X) & 0
   \end{smallmatrix}\right).
\end{equation*}
Since $(\g \oplus \R \xi)_{B_i^\kappa}\subset (\g \oplus \mathfrak{t})_{B_i^\kappa}$, we obtain for every $X \in (\g \oplus \R \xi)_{B_i^\kappa}$
\begin{equation}
	\mathit{Pf}(\iota_X \theta) = \frac{1}{\left(-2\pi\right)^{\lambda_i^\kappa /2}} \prod_{j=1}^{\lambda_i^\kappa /2} \alpha_j(X). \label{eq Pf iXtheta}
\end{equation}
 Let $\mathfrak{k}$ be a complement of $(\mathfrak{g} \oplus \R \xi)_{B_i^\kappa}$ in $\mathfrak{g} \oplus \R \xi$. Then, by the definition of the Cartan complex, we have $C_{\mathfrak{g} \oplus \R \xi}(B_i^\kappa) =S((\mathfrak{g} \oplus \R \xi)_{B_i^\kappa}^*)\otimes C_\mathfrak{k}(B_i^\kappa)$, $d_{\mathfrak{g} \oplus \R \xi}=1\otimes d_{\mathfrak{k}}$, and $H_{\mathfrak{g} \oplus \R \xi }(B_i^\kappa) =S((\mathfrak{g} \oplus \R \xi)_{B_i^\kappa}^*)\otimes H_\mathfrak{k}(B_i^\kappa)$.  $S((\mathfrak{g} \oplus \R \xi)_{B_i^\kappa}^*)$ is a polynomial ring, so any $\omega_0 \in S((\mathfrak{g} \oplus \R \xi)_{B_i^\kappa}^*)$ with $\omega_0 \neq 0$ is not a zero divisor in $H_{\mathfrak{g} \oplus \R \xi }(B_i^\kappa)$. More generally, if there is an $\omega_0 \in S((\mathfrak{g} \oplus \R \xi)_{B_i^\kappa}^*)$ such that  $\omega \in H_{\mathfrak{g} \oplus \R \xi }(B_i^\kappa)$ is of the form 
\[\omega = \omega_0 \otimes 1 + \text{terms of positive degree in }H_\mathfrak{k}(B_i^\kappa),\]
then $\omega$ is not a zero divisor in $H_{\mathfrak{g} \oplus \R \xi }(B_i^\kappa)$ (cf. also \cite[p.~605]{atiyah1983yang}). Hence, for $E_i^\kappa$ not to be a zero divisor, it suffices to show that its purely polynomial part in $S((\mathfrak{g} \oplus \R \xi)_{B_i^\kappa}^*)\otimes 1$ is not a zero divisor. Since $E^\kappa_i$ is a form of degree $\lambda^\kappa_i$, as is $\prod_{j=1}^{\lambda_i^\kappa /2} \alpha_j$, it follows with Equation \eqref{eq Pf iXtheta} that 
\[  E^\kappa_i = \frac{1}{\left(2\pi\right)^{\lambda_i^\kappa /2}} \prod_{j=1}^{\lambda_i^\kappa /2} \alpha_j \otimes 1 + \text{terms of positive degree in }H_\mathfrak{k}(B_i^\kappa).\]
Thus, it suffices to show that $\prod_{j=1}^{\lambda_i^\kappa /2} \alpha_j \not\equiv 0$ on $(\mathfrak{g} \oplus \R \xi)_{B_i^\kappa}$.
Suppose that $\prod_{j=1}^{\lambda_i^\kappa /2} \alpha_j$ vanishes on $(\mathfrak{g} \oplus \R \xi)_{B_i^\kappa}$. Then there exists an $\alpha_{j_0}$ that vanishes on all of  $(\mathfrak{g} \oplus \R \xi)_{B_i^\kappa}$. By Matrix \eqref{eq weight matrix}, this means that a two-dimensional subspace of $\nu^-B_i^\kappa$ vanishes under $(\mathfrak{g} \oplus \R \xi)_{B_i^\kappa}$. This, however, contradicts Lemma \ref{lem no non-zero fixed vectors}.
\end{proof}
Recalling Propositions \ref{prop H G (M,F)=H Gx xi} and \ref{prop euler equal}, we also set $E_i^\kappa = e_G(\nu^- B_i^\kappa, \CF)$ by abuse of notation. We obtain an injective map
\begin{align}
 \oplus ( \cdot E^\kappa_i) \colon \bigoplus_i H^{*-\lambda^\kappa_i}_{G}(B^\kappa_i,\CF) \longrightarrow \bigoplus_i H^{*}_{G}(B^\kappa_i,\CF). \label{eq Eul}
\end{align}

Now, set $Y_s^\pm := \{\pm \Psi^{X_{s+1}}|_{Y_s} \geq 0\}$. Obviously, we then have $Y_{s+1} = \{\Psi^{X_{s+1}}|_{Y_s} = 0\}=Y_s^+ \cap Y_s^-$. Let $0< \kappa_0 < \kappa_1 < ...< \kappa_m$ be the critical values of $\Psi^{X_{s+1}}|_{Y_s}$ attained on $Y_s^+$. Consider the following diagram, in which the top row is the long exact sequence of the pair $((Y_s^+)^{\kappa_j + \epsilon_j}, (Y_s^+)^{\kappa_j - \epsilon_j})$, see Proposition \ref{prop rel cohom LES}.
Note that by excision and homotopy equivalence, we have $H_{G}^*(Y_s^{\kappa_j + \epsilon_j}, Y_s^{\kappa_j - \epsilon_j},\CF) = H_{G}^*((Y_s^+)^{\kappa_j + \epsilon_j}, (Y_s^+)^{\kappa_j - \epsilon_j},\CF)$.
The following argument is similar to that in \cite[Theorem~7.1]{goertsches2010torus}. The Isomorphisms \eqref{eq rel coh Ys is equal coh nu- comp supp} and \eqref{eq thom} yield that the two vertical arrows on the left are isomorphisms. The vertical arrow on the right and the diagonal arrow are the restriction to $B^{\kappa_j}_i$,the composition of $h_j$ and the right vertical arrow is  the restriction to $B^{\kappa_j}_i$ of the first factor. By Theorem \ref{theorem thom iso}, the diagram is commutative. Multiplication by $\oplus ( \cdot E^\kappa_i)$ is injective by \eqref{eq Eul}, therefore $h_j$ has to be injective. 
\[
\begin{xy}
 \xymatrix@-13pt{
    *+[r]{... } \ar[r] & H_{G}^*(Y_s^{\kappa_j + \epsilon_j}, Y_s^{\kappa_j - \epsilon_j},\CF) \ar[r]^{h_j} \ar[d]_{\cong} & H_{G}^*((Y_s^+)^{\kappa_j + \epsilon_j},\CF) \ar[r] \ar[dd] & H_{G}^*((Y_s^+)^{\kappa_j - \epsilon_j},\CF) \ar[r] &...\\ 
  & \bigoplus_i H^*_{G,c}(\nu^- B^{\kappa_j}_i,\CF) \ar[rd] \ar[d]_{\cong} &   & \\
  & \bigoplus_i H^{*-\lambda^{\kappa_j}_i}_{G}(B^{\kappa_j}_i,\CF) \ar[r]_{\oplus ( \cdot E^{\kappa_j}_i)} & \bigoplus_i H^{*}_{G}(B^{\kappa_j}_i,\CF). & 
}
\end{xy}
\]

By injectivity of $h_j$, the long exact sequence 
turns into the short exact sequences
\[ 0 \rightarrow H_{G}^*(Y_s^{{\kappa_j} + \epsilon_j}, Y_s^{\kappa_j - \epsilon_j},\CF) \rightarrow H_{G}^*((Y_s^+)^{\kappa_j + \epsilon_j},\CF) \xrightarrow{\iota_j} H_{G}^*((Y_s^+)^{\kappa_j - \epsilon_j},\CF)  \rightarrow 0,\]
hence, the natural map $\iota_j$ is surjective.  Furthermore, we know that the homotopy type does not change before crossing a critical value, cf. \cite[Theorem~4.3]{wasserman1969equivariant},  
thus $H_{G}^*((Y_s^+)^{\kappa_j - \epsilon_j},\CF) =H_{G}^*((Y_s^+)^{\kappa_{j-1}+ \epsilon_{j-1}},\CF)$. In particular, this implies $H_{G}^*((Y_s^+)^{\kappa_0 - \epsilon_0},\CF) =H_{G}^*(Y_{s+1},\CF)$ and  $H_{G}^*((Y_s)^+,\CF) = H_{G}^*((Y_s^+)^{\kappa_m + \epsilon_m},\CF)$. This yields the following sequence of surjective maps
\[ H_{G}^*((Y_s)^+,\CF) = H_{G}^*((Y_s^+)^{\kappa_m + \epsilon_m},\CF)\rightarrow \cdots \rightarrow H_{G}^*((Y_s^+)^{\kappa_0 + \epsilon_0},\CF) \rightarrow H_{G}^*(Y_{s+1},\CF).\]
Thus, the natural map $H_{G}^*(Y_s^+,\CF) \rightarrow H_{G}^*(Y_{s+1},\CF) $ is surjective. 

Analogous reasoning with $-\Psi^{X_{s+1}}|_{Y_s}$ and, hence, the Euler classes of the positive normal bundles yields the surjectivity of $ H_{G}^*(Y_s^-,\CF) \rightarrow H_{G}^*(Y_{s+1},\CF)$. 

These epimorphisms turn the Mayer-Vietoris sequence of $(Y_{s+1}, Y_s^+, Y_s^-)$ (see Proposition \ref{prop MVS}; more precisely, we apply Proposition \ref{prop MVS} to the two open sets $\{x \in Y_s \mid \pm \Psi^{X_{s+1}}(x) > -\delta \}$ which, for sufficiently small $\delta >0$, are $G\times T$-homotopy equivalent to $Y_s^\pm$) into the short exact sequences
\begin{align}
 0 \rightarrow H_{G}^*(Y_s,\CF) \stackrel{(j^+)^* \oplus (j^-)^*}{\rightarrow} H_{G}^*(Y_s^+,\CF)\oplus H_{G}^*(Y_s^-,\CF)\stackrel{(i^+)^* - (i^-)^*}{\to} H_{G}^*(Y_{s+1},\CF)\rightarrow 0, \label{eq MVS i j}
 \end{align}
where $j^\pm \colon Y_s^\pm \hookrightarrow Y_s$ and $i^\pm \colon Y_{s+1} \hookrightarrow Y_s^\pm$ denote the inclusions. We claim that the composition of these maps 
induces an epimorphism in equivariant basic cohomology. So let $\omega \in H_{G}^*(Y_{s+1},\CF)$ be arbitrary. We know that $(i^\pm)^*$ are surjective, hence there exist $\eta^\pm \in H_{G}^*(Y_s^\pm,\CF)$ such that $(i^\pm)^*(\eta^\pm)=\omega$. But this means that $\eta^+ + \eta ^- \in \operatorname{ker} ((i^+)^*-(i^-)^*)= \operatorname{im} ((j^+)^* + (j^-)^*)$, i.e., there is a $\sigma \in H_{G}^*(Y_s,\CF)$ such that $ \eta^+ + \eta ^- =(j^+)^*(\sigma) + (j^-)^*(\sigma)$. This, however, yields $\omega = (i^+)^*\circ (j^+)^*(\sigma) = (i^-)^* \circ (j^-)^* (\sigma)$ and concludes the proof of the surjectivity
\[ H_{G}^*(Y_s,\CF) \twoheadrightarrow H_{G}^*(Y_{s+1},\CF).\]
Iteration for $s=0, ..., r-1$ yields the desired sequence of epimorphisms 
\[ H^*_G(M, \mathcal{F})=H^*_G(Y_{0}, \mathcal{F}) \rightarrow H^*_G(Y_{1}, \mathcal{F}) \rightarrow ... \rightarrow H^*_G(Y_{r}, \mathcal{F})= H^*_G(\Psi^{-1}(0), \mathcal{F}). \qedhere \]
\end{proof}

\begin{remark}
The idea to obtain the basic Kirwan map as the composition of surjective maps
$H^{*}_{G}(Y_s, \mathcal{F})\twoheadrightarrow H^{*}_{G}(Y_{s+1}, \mathcal{F}) $
stems from the approach used in \cite[Proof~of~Theorem~G.13]{ggk2002moment} and \cite[Proof~of~Theorem~3.4]{baird2010topology}. To obtain surjectivity, Thom and Euler class arguments were also used in \cite{baird2010topology}. 
\end{remark}

We want to point out that Equation \eqref{eq crit psi x} implies that the $G$-action on $\Psi^{-1}(0)$ is locally free if $0$ is a regular value of $\Psi$. Then $\Omega(\Psi^{-1}(0), \CF)$ is a $\mathfrak{g}$-dga of type (C) (cf. \cite[Section~5.1]{guillemin1999supersymmetry} and \cite[Proof of Lemma 3.18]{goertsches2010equivariant}) and Proposition \ref{prop H G = H(bas) if of type C} yields $H_G(\Psi^{-1}(0), \CF)\cong H(\Omega(\Psi^{-1}(0), \CF)_{\mathrm{bas }\mathfrak{g}})$. In case of a free $G$-action on $\Psi^{-1}(0)$, this implies that we have an isomorphism $H_G(\Psi^{-1}(0), \CF)\cong H(\Psi^{-1}(0)/G, \CF_0))$, where $\CF_0$ denotes the foliation induced by $\xi$ on the contact quotient $\Psi^{-1}(0)/G$.

\section{Examples}\label{sec ex}
\subsection{Boothby-Wang fibration}
This example shows that Theorem \ref{theorem main}, under the assumption that $\xi$ induces a free $S^1$-action, reproduces Kirwan's surjectivity result for the symplectic $S^1$-quotient. In this case, $\{\psi_t\}=S^1=T$ and 	$\pi \colon M \rightarrow M/\{\psi_t\} =: B$ is a $G$-equivariant 
	principal $S^1$-bundle. $d \alpha$ descends to a symplectic form $\omega$ on $B$, $d \alpha = \pi^* \omega$ (see, e.g.,~\cite[Theorem~6.1.26]{boyer2008sasakian}). Furthermore, we have $H_G(M, \CF) = H_G(B)$ (compare Example \ref{ex xi free S1 action} and \cite[Example~3.14]{goertsches2010equivariant}). A symplectic moment map $\mu$ on $B$ is defined up to a constant by $d(\mu^X) = \iota_{X_{B}} \omega$. Since $\mathcal{L}_X \alpha = 0$, however, this equation, when pulled back to $M$, is equivalent to $-d \pi^*\mu^X = d \iota_{X_M} \alpha$. $\iota_{X_M} \alpha$ is an $S^1$-invariant function, so there is a $f^X \in \Omega^0(B)$ such that $\pi^* f^X = \iota_{X_M} \alpha$. $\mu^X := - f^X$ then defines a moment map for the $G$-action on $(B, \omega)$. Then $\mu^{-1}(0) = \Psi^{-1}(0)/S^1$ and Theorem \ref{theorem main} yields the known Kirwan surjectivity induced by the inclusion $\mu^{-1}(0) \hookrightarrow B$.
	\[H^{*}_G(B)= H_G(M, \CF)\twoheadrightarrow H^{*}_{G}(\Psi^{-1}(0),\mathcal{F})=H^{*}_{G}(\Psi^{-1}(0)/S^1)= H^{*}_{G}(\mu^{-1}(0)). \]

\subsection{\texorpdfstring{$S^{1}$}{S{1}}-action on a Weighted Sphere}\label{subsection ex sphere}
	We will also present an example where $T \neq S^1$.
	
	Consider $(M, \alpha)=(S^{2n+1}, \alpha_w)$ from Example \ref{ex weighted sphere} with weight  $w \in \R^{n+1}$, $w_j >0$. If at least two $w_j$ are linearly independent over $\mathbb{Q}$, then $T$ is a torus of rank $\geq 2$.
	Then
	\[\alpha=\frac{\tfrac{i}{2} \left( \sum_{j=0}^n z_j d\bar z_j - \bar z_j d z_j \right)}{\sum_{j=0}^n w_j|z_j|^2} , \quad \xi = i\left( \sum_{j=0}^{n} w_j(z_j \tfrac{\partial}{\partial z_j} - \bar z_j \tfrac{\partial}{\partial \bar z_j}) \right).\]
	The flow of $\xi$ is given by $\psi_t (z) = (e^{itw_0}z_0, ..., e^{itw_n}z_{n})$. 	
	Furthermore, let $G=S^1$ act (freely) on $S^{2n+1}$ with weights $\beta = (1,...,1,-1)$, that is, by $\lambda \cdot z = (\lambda z_0, ...,\lambda z_{n-1},\lambda^{-1} z_n)$. The fundamental vector field $X$ corresponding to $1 \in \R \simeq \mathfrak{s}^1$ is given by
	\[X(z) = i\left( \sum_{j=0}^{n-1} (z_j \tfrac{\partial}{\partial z_j} - \bar z_j \tfrac{\partial}{\partial \bar z_j}) -z_n \tfrac{\partial}{\partial z_n} + \bar z_n \tfrac{\partial}{\partial \bar z_n}\right)\]
	and we compute the contact moment map to be 
	\[ \Psi (z) = \frac{ \sum_{j=0}^{n-1} |z_j|^2 -|z_n|^2}{\sum_{j=0}^n w_j|z_j|^2}=\frac{ 1-2|z_n|^2}{\sum_{j=0}^n w_j|z_j|^2}. \]
	Hence, we obtain 
	\[\Psi^{-1}(0) = \left\lbrace z \in S^{2n+1} \mid \sum_{j=0}^{n-1} |z_j|^2 = \tfrac{1}{2}=|z_n|^2\right\rbrace= S^{2n-1}\left(\tfrac{1}{\sqrt{2}}\right) \times S^{1}\left(\tfrac{1}{\sqrt{2}}\right).\]
	To compute $H_G(M, \CF)$, consider the standard diagonal $S^1$-action on $\C^{n+1}$ given by $\lambda \cdot z = (\lambda z_0, ..., \lambda z_n)$. This action is Hamiltonian (w.r.t. the standard symplectic structure on $\C^{n+1}$) 
	and a moment map is given by $\mu(z)=\tfrac{1}{2} \sum_j|z_j|^2$. Set $f:= || \mu - 1/2||^2$. The $G \times T$-action on $M$ can naturally be extended to $\C^{n+1}$. $\mu$ and, hence, $f$ are $G \times T$-invariant. We will compute $H_G(M, \CF)$ by applying Morse theory with $f$ on $\C^{n+1}$, an idea learned from Jonathan Fisher, see also \cite{kirwan1984cohomology}. The critical set of $f$ is given by $\operatorname{Crit}(f)= \{0\} \ \dot \cup \ M$, and the critical values are $f(0)=1/4$, $f(M)=0$. The Hessian $H$ of $f$ at 0 is given by $- \operatorname{Id}$, which is nondegenerate. For $z \in M$, the normal direction (to $M$) is spanned by $Y := \sum z_j \partial_{z_j}+ \bar z_j \partial_{\bar z_j}$ and $H_z(Y,Y)=2$, 
	which yields that $H_z$ is non-degenerate in normal direction. It follows that $f$ is a $G \times T$-invariant Morse-Bott function.
	Note that the relative $\g \times \R \xi$-equivariant cohomology is constructed as in Definition \ref{def rel eq coh} and, analogously to Proposition \ref{prop rel cohom LES}, we obtain the long exact sequence of the pair $(\{z \in \C^{n+1} \mid f(z)\leq 1/4+\epsilon\}, \{z \in \C^{n+1} \mid f(z)\leq 1/4-\epsilon\})$. Similarly, the relevant isomorphisms of Section \ref{subsec eq bas coh} can be transferred to this setting and, analogously to the Isomorphisms \eqref{eq rel coh Ys is equal coh nu- comp supp} and \eqref{eq thom}, we obtain the isomorphism 
	\begin{align*}
	H_{\g+\R \xi}^* (\{ f\leq 1/4+\epsilon\}, & \{ f\leq 1/4-\epsilon\}) \\
	&\cong 	H_{\g+\R \xi}^*(\{ f\leq 1/4-\epsilon\} \cup_{ S^{\nu\{0\}}} D^{\nu \{0\}}, \{ f\leq 1/4-\epsilon\})  \\
	&\cong 	H_{\g+\R \xi}^*(D^{\nu \{0\}},S^{\nu\{0\}})  \\
	&\cong 	H_{\g+\R \xi,c}^*(\mathring{D}^{\nu \{0\}})  \\
	&\cong 	H_{\g+\R \xi,c}^*(\nu \{0\})  \\
	&\cong 	H_{\g+\R \xi}^{*-2(n+1)}( \{0\}) 
	\end{align*}
		
	Note that $\{z \in \C^{n+1} \mid f(z)\leq 1/4+\epsilon\}$ is a $G \times T$-equivariant retraction of $\C^{n+1}$ and that $\{z \in \C^{n+1} \mid f(z)\leq 1/4-\epsilon\}$ $G \times T$-equivariantly retracts onto $M$. 
	We obtain the isomorphism 
	\[T: H_{\g+\R \xi}^*(\C^{n+1}, M)\cong H_{\g+\R \xi}^* (\{ f\leq 1/4+\epsilon\},  \{ f\leq 1/4-\epsilon\}) \cong H^{*-2(n+1)}_{\g+\R \xi}(\{0\}).\]
	The long exact sequence then looks as follows
	\[ \begin{xy}
 \xymatrix@-13pt{
    *+[r]{... } \quad \ar[r] &\quad H_{\g+\R \xi}^*(\C^{n+1}, M) \quad \ar[r] \ar[d]_{\cong}^{T} & \quad H_{\g+\R \xi}^*(\C^{n+1}) \ar[r]  & H_{\g+\R \xi}^*(M) \ar[r] &...\\ 
    & H^{*-2(n+1)}_{\g+\R \xi}(\{0\}) }
\end{xy}
	\]
	In this diagram, the combination of $T^{-1}$ with the restriction from $\C^{n+1}$ to $\{0\}$, that is,  $H_{\g+\R \xi}^{*-2(n+1)}(\{0\})\to H_{\g+\R \xi}^*(\C^{n+1})\stackrel{\cong}{\to}   H_{\g+\R \xi}^*(\{0\})$, is multiplication with the equivariant Euler class $e$ of the normal bundle to $\{0\}$, which is injective (this is seen analogously to Lemma \ref{lem wedge E injective}). We obtain short exact sequences
	\begin{align} 0 \to H_{\g+\R \xi}^{*-2(n+1)}(\{0\})\stackrel{\cdot e}{\to}   H_{\g+\R \xi}^*(\{0\}) \to H_{\g+\R \xi}^*(M) \to 0.\label{eq ses example sphere} \end{align}
	We will now compute $e$. The (negative) normal bundle is the trivial bundle $\C^{n+1} \times \{0\}$ which is the product of the line bundles $\nu_j := \C_j \times \{0\}$, where $\C_j$ denotes the $j$-th coordinate. 
	 The bundle of oriented orthonormal frames of $\nu_j$ is the trivial bundle $P_j=SO(2)\times \{0\}$. The canonical flat connection form $\theta_j$ on $P_j$ is invariant under $G\times \{\psi_t\}$. 
	 The vector fields generated by $X$ and $\xi$ on $P_j$ coincide with the fundamental vector fields of the $SO(2)$-action with weights $\beta_j$, $w_j$, respectively, so $\iota_X \theta_j = \left( \begin{smallmatrix} 0 & -\beta_j\\ \beta_j & 0                                                                                                                                                                                                                                                                                                                                                                                   \end{smallmatrix}\right)$ and $\iota_\xi \theta_j = \left( \begin{smallmatrix} 0 & -w_j\\ w_j & 0                                                                                                                                                                                                                                                                                                                         
                                                          \end{smallmatrix}\right)$. Since the curvature of $\theta_j$ is zero, the Euler class $e_j$ of $\nu_j$ is then given by $\mathrm{Pf}(-\iota_X\theta_j u - \iota_\xi \theta_j s)=\tfrac{1}{2\pi}(u\beta_j + sw_j)$, where $(u,s)$ are dual to $(X, \xi)$. We then obtain $e$ as $e=\prod_j e_j = \tfrac{1}{(2\pi)^{n+1}} (-u + w_ns)\prod_{j=0}^{n-1}(u +w_js)$.
	
	The short exact sequence \eqref{eq ses example sphere} then yields that, as $(\R[u]=S(\g^*))$-algebra,
	\[H_G(M, \CF)= \R[u,s]/\langle e \rangle = \R[u,s]\left/\left\langle (-u + w_ns)\prod_{j=0}^{n-1}(u +w_js) \right\rangle \right..\]
	
	$G$ acts freely on $\Psi^{-1}(0)$, so $H_G(\Psi^{-1}(0),\CF)=H(\Psi^{-1}(0)/G,\CF)$ by Proposition \ref{prop H G = H(bas) if of type C}, where we denote the induced foliation on the quotient also by $\CF$. $\Psi^{-1}(0)/G$ is $\{\psi_t\}$-equivariantly diffeomorphic to $S^{2n-1}(\tfrac{1}{\sqrt{2}})$ via $[z] \mapsto (\sqrt{2}z_n z_0, ..., \sqrt{2}z_n z_{n-1})$, where $\psi_t$ acts on $S^{2n-1}(\tfrac{1}{\sqrt{2}})$ by 
	$\psi_t(z)=(e^{it(w_0+w_n)}z_0, ..., e^{it(w_{n-1}+w_n)}z_n)$. 
	This is the Reeb flow of a weight\-ed Sasakian structure on $S^{2n-1}$, hence, the induced foliation is defined by the Reeb vector field belonging to this Sasakian structure. It follows that the basic cohomology ring $H^*(\Psi^{-1}(0)/G,\CF)$ is isomorphic to $H^*(\C P^{n-1})$, see \cite[Proposition~7.5.29]{boyer2008sasakian}. 
	
	We will now compute the restriction from $H_{\g+\R \xi}^*(M) $ to $H_{\g+\R \xi}^*(\Psi^{-1}(0)) $. Since $\iota_X \alpha$ vanishes on $\Psi^{-1}(0)$ and $\iota_\xi \alpha=1$, we have $0=[d_{\g+\R \xi} \alpha]=[d\alpha-s]$ in $H_{\g+\R \xi}^*(\Psi^{-1}(0)) $. Similarly, consider the $G\times T$-invariant 1-form 
	\[\gamma:=w_n \alpha - i(z_nd\bar z_n - \bar z_n dz_n) .\]
	On $\Psi^{-1}(0)$, we have $\gamma(\xi)=
	0$ and $\gamma(X)=
	1$, so $d_{\g+\R \xi}\gamma = d \gamma + u$. Since $d(i(z_nd\bar z_n - \bar z_n dz_n))$ vanishes on $\Psi^{-1}(0)$, it is $d\gamma= w_n d\alpha$. 
	It follows that $[u]=[w_n d\alpha]=[w_ns]$ in $H_{\g+\R \xi}^*(\Psi^{-1}(0)) $. Note that $\alpha|_{\Psi^{-1}(0)}$ is $G$-basic, so $d\alpha^{\wedge n}|_{\Psi^{-1}(0)}=0$.
	Under $[s]\mapsto [\tfrac{1}{w_n}u]$, $u^n\mapsto 0$,  $H_G(M, \CF)= \R[u,s]/\langle e \rangle$ is surjectively mapped to $\R[u]/\langle u^n \rangle \cong H(\C P^{n-1})$.

\section{The Kernel of the Kirwan Map}\label{sec kernel}
In this section, we derive a description of the kernel of the basic Kirwan map for $G=S^1$. Recall that $(M, \alpha)$ denotes a compact $K$-contact manifold, $\xi$ its Reeb vector field and $\Psi$ the contact moment map for the action of a torus $G$ on $M$ that preserves $\alpha$. We assume that 0 is a regular value of $\Psi$. Throughout this section, $(X_s)$ denotes a basis of $\g$ according to Proposition \ref{prop basis K-contact}. Recall that we set $Y_s = (\Psi^{X_1},..., \Psi^{X_s})^{-1}(0)$, $Y_0=M$, and $Y_s^\pm = \{ \pm \Psi^{X_{s+1}}|_{Y_s} \geq 0\}$. As in the previous section, denote the inclusions by $\iota_s : Y_{s+1} \rightarrow Y_s$, $\iota_s^\pm : Y_{s+1} \rightarrow Y_s^\pm$ and $j_s^\pm : Y_{s}^\pm \rightarrow Y_s$. Additionally, set $C_s := \mathit{Crit}(\Psi^{X_{s+1}}|_{Y_s})$. Recall that $C_1=\mathit{Crit}(\Psi)$, see Lemma \ref{lem crit= min Gpsi (GT)-orbits}.

We adjust the computations that Tolman and Weitsman did in the symplectic setting (\cite[Section~3]{tolman2003kernel}) to our case. Note that we apply the results they obtained for $S^1$-actions to the components $\Psi^{X_{s+1}}|_{Y_s}$ for actions of tori of arbitrary rank. The following Lemma corresponds to \cite[Lemma~3.1]{tolman2003kernel}.
\begin{lemma}\label{lem ses}
	Let $f= \Psi^{X_{s+1}}|_{Y_s} $ or $f= - \Psi^{X_{s+1}}|_{Y_s} $ and let $\kappa$ be any critical value of $f$. Denote with $B_i^\kappa$ the connected components of $C_s \cap f^{-1}(\kappa)=:C_s^\kappa$ and with $\lambda_i^\kappa$ their indices. Let $\epsilon >0$ such that $[\kappa- \epsilon, \kappa + \epsilon ]$ does not contain a critical value besides $\kappa$. Then there exists a short exact sequence
	\[0 \rightarrow \oplus_i  H^{*-\lambda^{\kappa}_i}_{G}\! (B^{\kappa}_i,\! \CF) \xrightarrow{\varphi} H_{G }^*(f^{-1}(\!(-\infty, \kappa + \epsilon]),\! \CF) \rightarrow H_{G}^*(f^{-1}(\!(-\infty, \kappa - \epsilon]),\!\CF) \rightarrow 0,\]
	such that the composition of the injection $\varphi$ with the restriction to $C^\kappa_s$ is the sum of the products with the Euler classes $E^\kappa_i \in H^{\lambda^{\kappa}_i}_{G}(B^{\kappa}_i,\CF) \simeq H^{\lambda^{\kappa}_i}_{\g \oplus \R \xi}(B^{\kappa}_i)  $ of the negative normal bundles of the $B^\kappa_i$.	
\end{lemma}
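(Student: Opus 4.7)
The plan is to extract the desired short exact sequence from the long exact sequence of the pair $(f^{-1}((-\infty,\kappa+\epsilon]),\, f^{-1}((-\infty,\kappa-\epsilon]))$ in relative equivariant basic cohomology (Proposition \ref{prop rel cohom LES}), and then argue that the connecting homomorphism preceding $\varphi$ is forced to vanish by Euler class injectivity.

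First, I would identify the relative term. Since $f$ is $G\times T$-invariant and Morse-Bott (Proposition \ref{prop Hess formula}), and $\kappa$ is the only critical value in $[\kappa-\epsilon,\kappa+\epsilon]$, Wasserman's theorem gives a $G\times T$-equivariant diffeomorphism of $f^{-1}((-\infty,\kappa+\epsilon])$ with $f^{-1}((-\infty,\kappa-\epsilon])$ union the disjoint handle bundles $D^{\nu^+B_i^\kappa}\oplus D^{\nu^-B_i^\kappa}$ attached along $D^{\nu^+B_i^\kappa}\oplus S^{\nu^-B_i^\kappa}$, precisely as in \eqref{eq homotopy type}. Running through the same chain of identifications used in the proof of Theorem \ref{theorem main}, namely excision (Proposition \ref{prop excision}), the homotopy invariance of relative equivariant basic cohomology (Proposition \ref{prop homotopy equiv relative}), the comparison of the two notions of relative cohomology (Proposition \ref{prop rel cohom are isom}) and its identification with compactly supported cohomology (Proposition \ref{prop rel cohom and cohom comp supp isom}), followed by the basic equivariant Thom isomorphism (Theorem \ref{theorem thom iso}), I obtain
\[
H_G^*\bigl(f^{-1}((-\infty,\kappa+\epsilon]),\, f^{-1}((-\infty,\kappa-\epsilon]),\, \CF\bigr) \;\cong\; \bigoplus_i H_G^{*-\lambda_i^\kappa}(B_i^\kappa,\CF).
\]
Calling the connecting map $\varphi$, this yields the candidate injection.

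Second, I would verify that the composition of $\varphi$ with the restriction $H_G^*(f^{-1}((-\infty,\kappa+\epsilon]),\CF) \to \bigoplus_i H_G^*(B_i^\kappa,\CF)$ is given, on each summand, by multiplication with the basic equivariant Euler class $E_i^\kappa$ of $\nu^-B_i^\kappa$. This follows naturally from Theorem \ref{theorem thom iso}: under the Thom isomorphism the composition $(\iota^-)^*\iota^-_*$ is exactly $\wedge\, e_G(\nu^-B_i^\kappa,\CF)$, and, via Propositions \ref{prop H G (M,F)=H Gx xi} and \ref{prop euler equal}, this agrees with $\wedge E_i^\kappa$ under the identification with $H^*_{\g\oplus\R\xi}(B_i^\kappa)$.

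Third, Lemma \ref{lem wedge E injective} asserts that multiplication by $E_i^\kappa$ is injective on each $H_G^*(B_i^\kappa,\CF)$. Consequently $\varphi$ is injective, which forces the preceding connecting map $H_G^{*-1}(f^{-1}((-\infty,\kappa-\epsilon]),\CF) \to H_G^*(\cdots,\cdots,\CF)$ in the long exact sequence to vanish. The long exact sequence therefore collapses to the claimed short exact sequence, with the stated description of $\varphi$. The only real work is the bookkeeping of the identifications in step one and ensuring compatibility with $\varphi$ in step two; the injectivity argument itself is immediate from the earlier lemmas.
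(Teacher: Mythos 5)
Your proposal is correct and follows essentially the same route as the paper, whose own proof simply refers back to the corresponding portion of the proof of Theorem \ref{theorem main} (handle attachment, excision, Thom isomorphism, Euler class injectivity collapsing the long exact sequence). The only nitpick is terminological: your $\varphi$ is the map $\beta^*$ from relative to absolute cohomology, not the connecting homomorphism, but you use it correctly and later correctly identify the genuine connecting map as the one forced to vanish.
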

\begin{proof}
	The proof works analogous to the corresponding part of the proof of Theorem \ref{theorem main}.
\end{proof}
	The symplectic analogue of the following proposition was remarked after Theorem 3.2 in \cite{tolman2003kernel}.
\begin{proposition}\label{prop injection}
	Let $f= \Psi^{X_{s+1}}|_{Y_s} $ or $f= - \Psi^{X_{s+1}}|_{Y_s} $. For every regular value $a$ of $f$, the restriction
	\[ H_{G }^*(f^{-1}(\!(-\infty,a]), \CF) \rightarrow H_{G }^*(f^{-1}(\!(-\infty, a])\cap C_s, \CF) \]
	is injective.
\end{proposition}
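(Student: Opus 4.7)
The plan is to proceed by induction on the number of critical values of $f$ contained in the half-line $(-\infty, a]$, imitating the classical Morse-theoretic injectivity argument used, e.g., in \cite{tolman2003kernel} for the symplectic case. Let $\kappa_0 < \cdots < \kappa_m$ be the critical values of $f$ in $(-\infty, a]$. If $m = -1$ (no critical values below $a$), then since $M$ is compact the function $f$ attains its minimum at a critical point, so $f^{-1}((-\infty, a]) = \varnothing$ and both sides of the restriction map vanish, giving the base case.

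For the inductive step, assume the statement holds whenever the sublevel set contains strictly fewer than $m+1$ critical values. Since there are no critical values in $(\kappa_m, a]$, \cite[Theorem~4.3]{wasserman1969equivariant} yields that $f^{-1}((-\infty, a])$ is $G \times T$-equivariantly diffeomorphic to $f^{-1}((-\infty, \kappa_m + \epsilon])$ for sufficiently small $\epsilon > 0$, where $\epsilon$ is chosen so that $[\kappa_m - \epsilon, \kappa_m + \epsilon]$ contains $\kappa_m$ as its only critical value. Without loss of generality, therefore, $a = \kappa_m + \epsilon$. Note that all sublevel sets of $f$ are $G \times T$-invariant and $\CF$-saturated, since $\mathcal{L}_\xi \Psi^X = \iota_{[\xi, X_M]}\alpha + \iota_{X_M} \mathcal{L}_\xi \alpha = 0$, so the equivariant basic cohomology of these sets is well-defined and the relevant restriction maps exist.

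Now let $\omega \in H_G^*(f^{-1}((-\infty, a]), \CF)$ restrict to zero on $C_s \cap f^{-1}((-\infty, a])$. Its further restriction to $f^{-1}((-\infty, \kappa_m - \epsilon])$ then has zero image on $C_s \cap f^{-1}((-\infty, \kappa_m - \epsilon])$ (the latter restriction factoring through the vanishing map), and the inductive hypothesis forces this intermediate restriction to vanish. By exactness of the short exact sequence of Lemma \ref{lem ses}, $\omega = \varphi(\sigma)$ for some $\sigma \in \bigoplus_i H_G^{*-\lambda_i^{\kappa_m}}(B_i^{\kappa_m}, \CF)$. Since $C_s^{\kappa_m} = \bigsqcup_i B_i^{\kappa_m}$ is contained in $C_s \cap f^{-1}((-\infty, a])$, the restriction of $\omega$ to $C_s^{\kappa_m}$ also vanishes. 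But by the concluding clause of Lemma \ref{lem ses}, this restriction equals $\bigoplus_i (\cdot E_i^{\kappa_m})(\sigma)$, and Lemma \ref{lem wedge E injective} guarantees that multiplication by each $E_i^{\kappa_m}$ is injective on $H_G^*(B_i^{\kappa_m}, \CF)$. Hence $\sigma = 0$ and thus $\omega = 0$, completing the induction.

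I do not anticipate any substantial obstacle beyond verifying that the Morse-theoretic sublevel set machinery behaves well for equivariant basic cohomology on a manifold with $G\times T$-invariant boundary; all the required technical inputs (invariant deformation retraction past regular intervals, the Thom-Gysin short exact sequence at a critical level, and the Euler class injectivity) have already been established in the previous sections of the paper, so the argument reduces to a diagram chase combined with the inductive hypothesis.
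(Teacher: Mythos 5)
Your proposal is correct and follows essentially the same route as the paper: induction on the number of critical values below $a$, using the short exact sequence of Lemma \ref{lem ses} to place a class vanishing on the lower sublevel set in the image of $\varphi$, and then killing it via the Euler-class injectivity of Lemma \ref{lem wedge E injective}. The only cosmetic difference is that you start the induction at the empty sublevel set, whereas the paper's base case is the first critical level handled via the homotopy equivalence of Proposition \ref{prop homotopic same cohom}; both are fine.
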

\begin{proof}
	This proposition is proved by induction on the number $k$ of critical values below $a$.
	For $k=1$, the Morse-Bott property of $f$ yields a bijection by Proposition \ref{prop homotopic same cohom} since the homotopy type does not change before crossing another critical value. Now, suppose the claim holds for $k$. Let $a$ be a regular value of $f$ with $k+1$ critical values below it and let $\delta >0$ such that $a-\delta $ is regular and such that there are $k$ critical values below $a-\delta$. Lemma \ref{lem ses} then yields that the restriction of $H_{G }^*(f^{-1}(\!(-\infty,a]), \CF)$ to $H_{G }^*(f^{-1}(\!(-\infty,a-\delta]), \CF)$ is surjective and we obtain the commuting diagram
	\[
\begin{xy}
 \xymatrix@-5pt{
    H_{G}^*(f^{-1}(\!(-\infty,a]), \CF) \ar@{->>}[r] \ar[d] & H_{G}^*(f^{-1}(\!(-\infty,a-\delta]), \CF)\ar@{^{(}->}[d] \\ 
  H_{G}^*(f^{-1}(\!(-\infty,a])\cap C_s, \CF) \ar[r] & H_{G}^*(f^{-1}(\!(-\infty,a-\delta])\cap C_s, \CF)
 }
\end{xy}
.\]
Suppose $\sigma \in H_{G}^*(f^{-1}(\!(-\infty,a]), \CF)$ such that $\sigma |_{f^{-1}(\!(-\infty,a])\cap C_s} = 0$. In particular, we have $ \sigma |_{f^{-1}(\!(-\infty,a-\delta])\cap C_s} = 0$, so by our induction's assumption, it is $ \sigma |_{f^{-1}(\!(-\infty,a-\delta])} = 0$. I.e., $\sigma$ lies in the kernel of the restriction to $f^{-1}((-\infty,a-\delta])$. By Lemmata \ref{lem ses} and \ref{lem wedge E injective}, the restriction of this kernel to $C_s^\kappa$ is injective. But $\sigma|_{C_s^\kappa}=0$, hence, $\sigma=0$.
\end{proof}
Since we assumed 0 to be a regular value, we obtain as a direct consequence
\begin{corollary}\label{cor injection}
	The following restrictions are injective:
	\begin{align*}
		H_{G}^*(Y_s^\pm, \CF)&\rightarrow H_{G}^*(Y_s^\pm \cap C_s, \CF)\\
		H_{G}^*(Y_s, \CF)&\rightarrow H_{G}^*(C_s, \CF).
	\end{align*}
	In particular, $H_G^*(M,\CF)\rightarrow H_G^*(\mathit{Crit}(\Psi), \CF)$ and $H_G^*(M^\pm,\CF)\rightarrow H_G^*(\mathit{Crit}(\Psi)\cap M^\pm, \CF)$ are injective.
\end{corollary}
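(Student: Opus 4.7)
The plan is to reduce both injectivity claims to Proposition~\ref{prop injection} applied with $a=0$, using the fact that the regularity of $0$ for $\Psi$ forces $C_s$ to split cleanly along $Y_s^\pm$.

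First I would verify that $0$ is a regular value of $f := \Psi^{X_{s+1}}|_{Y_s}$. Proposition~\ref{prop basis K-contact}\emph{(i)} says $0 \in \R^{s+1}$ is a regular value of $f_{s+1}$, so at every $x \in Y_{s+1}$ the differentials $d\Psi^{X_1}_x,\dots,d\Psi^{X_{s+1}}_x$ are linearly independent. Consequently $d\Psi^{X_{s+1}}_x$ is not in the span of the first $s$, and therefore its restriction to $T_xY_s = \bigcap_{j\leq s}\ker d\Psi^{X_j}_x$ is nonzero. Thus $Y_{s+1}\cap C_s = \varnothing$ and $0$ is a regular value of $f$. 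Since $Y_s^- = f^{-1}((-\infty,0])$ and $Y_s^+ = (-f)^{-1}((-\infty,0])$, applying Proposition~\ref{prop injection} with $a=0$ to $\pm f$ gives the two injections $H^*_G(Y_s^\pm,\CF) \to H^*_G(Y_s^\pm\cap C_s,\CF)$.

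For the injection on all of $Y_s$, combining $Y_{s+1}\cap C_s = \varnothing$ with $Y_s^+\cap Y_s^- = Y_{s+1}$ yields a disjoint decomposition $C_s = (Y_s^+\cap C_s)\sqcup(Y_s^-\cap C_s)$ into closed $G\times T$-invariant subsets, so
\[ H^*_G(C_s,\CF) = H^*_G(Y_s^+\cap C_s, \CF) \oplus H^*_G(Y_s^-\cap C_s, \CF). \]
The restriction $H^*_G(Y_s,\CF) \to H^*_G(C_s,\CF)$ then factors as
\[ H^*_G(Y_s,\CF) \xrightarrow{(j_s^+)^*\oplus(j_s^-)^*} H^*_G(Y_s^+,\CF)\oplus H^*_G(Y_s^-,\CF) \longrightarrow H^*_G(Y_s^+\cap C_s,\CF)\oplus H^*_G(Y_s^-\cap C_s,\CF), \]
where the second arrow is the direct sum of the injections just established. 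The first arrow is injective because it appears at the left end of the short exact sequence~\eqref{eq MVS i j} derived in the proof of Theorem~\ref{theorem main}: the Mayer-Vietoris sequence of $Y_s = Y_s^+\cup Y_s^-$ collapses to a short exact sequence precisely because the restrictions $H^*_G(Y_s^\pm,\CF)\to H^*_G(Y_{s+1},\CF)$ are surjective. The composition of the two injective maps is injective, giving the claim. The ``in particular'' statement is the $s=0$ case: $Y_0 = M$, $Y_0^\pm = M^\pm$, and $C_0 = \mathit{Crit}(\Psi^{X_1}) = \mathit{Crit}(\Psi)$ by Lemma~\ref{lem crit= min Gpsi (GT)-orbits}.

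There is no real obstacle here, since Proposition~\ref{prop injection} does the Morse-theoretic heavy lifting; the only structural input beyond it is the observation that the regularity of $0$ forces $C_s$ to sit entirely off the common boundary $Y_{s+1}$ of $Y_s^\pm$, so the restriction into $H^*_G(C_s,\CF)$ decomposes as a direct sum and can be threaded through the Mayer-Vietoris injection.
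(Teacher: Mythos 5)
Your proof is correct. The first pair of injections is obtained exactly as the paper intends: $0$ is a regular value of $f=\Psi^{X_{s+1}}|_{Y_s}$ (your verification via Proposition~\ref{prop basis K-contact}\emph{(i)} that $Y_{s+1}\cap C_s=\varnothing$ is the right one), and Proposition~\ref{prop injection} with $a=0$ applied to $\pm f$ does the rest. For the injection $H^*_G(Y_s,\CF)\to H^*_G(C_s,\CF)$ you take a genuinely different route: the paper's ``direct consequence'' is simply Proposition~\ref{prop injection} applied once more with a regular value $a$ above $\max f$, so that $f^{-1}((-\infty,a])=Y_s$ and $f^{-1}((-\infty,a])\cap C_s=C_s$ --- a one-line deduction needing no further input. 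You instead split $C_s=(Y_s^+\cap C_s)\sqcup(Y_s^-\cap C_s)$ (valid, since $C_s$ misses $Y_{s+1}=Y_s^+\cap Y_s^-$) and factor the restriction through the injective left-hand map of the Mayer--Vietoris short exact sequence~\eqref{eq MVS i j}. Your argument is sound, but it imports the surjectivity results from the proof of Theorem~\ref{theorem main} (which is what collapses the Mayer--Vietoris sequence), whereas the paper's route stays entirely inside the Morse-theoretic induction of Proposition~\ref{prop injection}; what your version buys in exchange is that it makes explicit the decomposition of $C_s$ along $Y_s^\pm$ that is used immediately afterwards in Corollary~\ref{cor ker j pm} and Theorem~\ref{thm kernel}. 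Your reading of the ``in particular'' clause as the case $s=0$, with $C_0=\mathit{Crit}(\Psi^{X_1})=\mathit{Crit}(\Psi)$ by Lemma~\ref{lem crit= min Gpsi (GT)-orbits}, is the correct one (the paper's ``$C_1$'' in the preamble of Section~\ref{sec kernel} is an index slip).
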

\begin{corollary}\label{cor ker j pm}
	Set $K_s^\pm := \{ \sigma \in H_{G}^*(Y_s, \CF) \mid \sigma|_{Y_s^\pm \cap C_s} = 0 \}$. Then we have  $K_s^\pm = \operatorname{ker}((j_s^\pm)^*)$, where $j_s^\pm: Y_s^\pm \to Y_s$ denotes the inclusion.
\end{corollary}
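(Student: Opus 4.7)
The plan is to deduce this corollary essentially as a direct consequence of Corollary \ref{cor injection}, by a simple diagram-chase, so the ``hard work'' has already been done in the preceding proposition.

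First I would observe the trivial inclusion $K_s^\pm \supseteq \operatorname{ker}((j_s^\pm)^*)$: if $\sigma \in H_G^*(Y_s, \CF)$ satisfies $(j_s^\pm)^*\sigma = 0$, then since the restriction from $Y_s$ to $Y_s^\pm \cap C_s$ factors as the composition of $(j_s^\pm)^*$ with the further restriction $H_G^*(Y_s^\pm, \CF) \to H_G^*(Y_s^\pm \cap C_s, \CF)$, we immediately get $\sigma|_{Y_s^\pm \cap C_s} = 0$, i.e. $\sigma \in K_s^\pm$.

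For the reverse inclusion, let $\sigma \in K_s^\pm$, so $\sigma|_{Y_s^\pm \cap C_s} = 0$. Consider $(j_s^\pm)^*\sigma \in H_G^*(Y_s^\pm, \CF)$. Its further restriction to $Y_s^\pm \cap C_s$ agrees with $\sigma|_{Y_s^\pm \cap C_s}$ (commutativity of restrictions) and therefore vanishes. By Corollary \ref{cor injection}, the restriction
\[ H_G^*(Y_s^\pm, \CF) \longrightarrow H_G^*(Y_s^\pm \cap C_s, \CF) \]
is injective, hence $(j_s^\pm)^*\sigma = 0$, i.e. $\sigma \in \operatorname{ker}((j_s^\pm)^*)$. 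This gives $K_s^\pm \subseteq \operatorname{ker}((j_s^\pm)^*)$ and completes both inclusions.

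The only non-trivial ingredient is the injectivity statement in Corollary \ref{cor injection}, which in turn rests on Proposition \ref{prop injection} and ultimately on the Morse--Bott property of $\Psi^{X_{s+1}}|_{Y_s}$ (Proposition \ref{prop Hess formula}) together with the Euler-class injectivity (Lemma \ref{lem wedge E injective}); there is therefore no further obstacle in the argument itself, and no Mayer--Vietoris or long-exact-sequence computation is required for this corollary.
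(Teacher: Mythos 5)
Your proposal is correct and follows exactly the paper's argument: the inclusion $\operatorname{ker}((j_s^\pm)^*) \subseteq K_s^\pm$ is the obvious factoring of restrictions, and the reverse inclusion is precisely the injectivity of $H_G^*(Y_s^\pm,\CF) \to H_G^*(Y_s^\pm\cap C_s,\CF)$ from Corollary \ref{cor injection}. You have merely spelled out the diagram-chase that the paper leaves implicit.
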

\begin{proof}
	Obviously $\operatorname{ker}((j_s^\pm)^*) \subset K_s^\pm$. Corollary \ref{cor injection} yields the reverse inclusion.
\end{proof}
\begin{remark}
	We also know that the induced maps $(j_s^\pm)^* :$ $H^*_G(Y_s, \CF)$ $\to$ \linebreak $H^*_G(Y_s^\pm, \CF) $ in equivariant basic cohomology 	
	are surjective. 
	
\begin{proof}
	We know from the proof of Theorem \ref{theorem main} that $(\iota_s^\pm)^*$ is surjective. So for every $\omega^\pm \in H^*_G(Y_s^\pm, \CF)$ there exists $\omega^\mp \in H^*_G(Y_s^\mp, \CF)$ such that $(\iota_s^\pm)^*\omega^\pm = (\iota_s^\mp)^* \omega^\mp$. Exactness of Sequence \eqref{eq MVS i j} yields that $\omega^+ + \omega^- \in \operatorname{ker}((\iota_s^+)^* - (\iota_s^-)^*) = \operatorname{im}((j_s^+)^* + (j_s^-)^*)$, hence, there exists $\sigma \in H^*_G(Y_s, \CF) : \omega^\pm = (j_s^\pm)^* \sigma$.
\end{proof}
\end{remark}

As a consequence of the previous corollary, we then obtain
the following, which is the contact analogue of \cite[Theorem 2]{tolman2003kernel}.
\begin{theorem}\label{thm kernel}
	Let $G=S^1$ and set $C^\pm := \mathrm{Crit}(\Psi) \cap M^\pm$, $K^\pm = \{\sigma \in H_{G}^*(M, \CF) \mid \sigma|_{C^\pm}=0 \}$.
	The kernel $K$ of the Kirwan map $H_{G}^*(M, \CF)\rightarrow H_{G}^*(\Psi^{-1}(0), \CF)$ is given by
	\[K=  K^+ \oplus  K^-.\]
\end{theorem}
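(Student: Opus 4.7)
The plan is to establish $K = K^+ \oplus K^-$ by treating the two inclusions separately, exploiting the short exact Mayer--Vietoris sequence produced in the proof of Theorem \ref{theorem main}. Before proceeding, I would note that for $G = S^1$ only the index $s=0$ of the inductive scheme is in play: $Y_0 = M$, $Y_1 = \Psi^{-1}(0)$, $Y_0^\pm = M^\pm$ and $C_0 = \mathit{Crit}(\Psi)$. Corollary \ref{cor ker j pm} then identifies $K^\pm$ with $\ker (j_0^\pm)^*$, where $j_0^\pm \colon M^\pm \to M$; moreover the Kirwan map factors as $\kappa = (\iota_0^\pm)^* \circ (j_0^\pm)^*$.

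For the easy direction $K^+ + K^- \subseteq K$, if $\sigma \in K^\pm$ then $(j_0^\pm)^*\sigma = 0$, so $\kappa(\sigma) = 0$. For the directness of the sum, an element of $K^+ \cap K^-$ vanishes on $C^+ \cup C^- = \mathit{Crit}(\Psi)$, and Corollary \ref{cor injection} then forces it to be zero. Thus $K^+ \oplus K^- \subseteq K$.

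The reverse inclusion is the main content. Given $\sigma \in K$, I would invoke the short exact Mayer--Vietoris sequence \eqref{eq MVS i j} applied for $s=0$,
\[0 \to H_G^*(M, \CF) \xrightarrow{(j_0^+)^*\oplus (j_0^-)^*} H_G^*(M^+, \CF) \oplus H_G^*(M^-, \CF) \xrightarrow{(\iota_0^+)^* - (\iota_0^-)^*} H_G^*(\Psi^{-1}(0), \CF) \to 0.\]
Since $\kappa(\sigma) = (\iota_0^-)^*(j_0^-)^*\sigma = 0$, the pair $(0, (j_0^-)^*\sigma) \in H_G^*(M^+,\CF) \oplus H_G^*(M^-,\CF)$ lies in the kernel of $(\iota_0^+)^* - (\iota_0^-)^*$, so by exactness there exists $\eta \in H_G^*(M, \CF)$ with $(j_0^+)^*\eta = 0$ and $(j_0^-)^*\eta = (j_0^-)^*\sigma$. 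Then $\eta \in K^+$ and $\sigma - \eta \in K^-$, yielding the decomposition $\sigma = \eta + (\sigma - \eta)$.

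The result is essentially a bookkeeping consequence of the short exact Mayer--Vietoris sequence together with Corollaries \ref{cor injection} and \ref{cor ker j pm}, all of which were established earlier. I do not foresee a genuine obstacle here: the analytically hard inputs (the basic equivariant Thom isomorphism, injectivity of multiplication by the Euler class, and the Morse--Bott splitting of the long exact sequences of pairs into short ones that makes Mayer--Vietoris short exact) are already in place. The only conceptual point requiring care is recognizing, via Corollary \ref{cor ker j pm}, that the definition of $K^\pm$ as the set of classes vanishing on $C^\pm$ coincides with the more convenient $\ker (j_0^\pm)^*$, so that the Mayer--Vietoris lift automatically lands in the right subspace.
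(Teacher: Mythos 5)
Your proposal is correct and follows essentially the same route as the paper: identify $K^\pm$ with $\ker (j^\pm)^*$ via Corollary \ref{cor ker j pm}, then use exactness of the Mayer--Vietoris short exact sequence \eqref{eq MVS i j} to lift and decompose an element of the kernel. The only cosmetic differences are that you produce the decomposition with a single lift of $(0,(j^-)^*\sigma)$ rather than lifting both components and invoking injectivity of $(j^+)^*\oplus(j^-)^*$, and you make the directness of the sum explicit via Corollary \ref{cor injection}, which the paper leaves implicit.
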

\begin{proof}
	By Corollary \ref{cor ker j pm}, $K^\pm = \ker (j^\pm)^*$. It follows that $K^\pm \subset 
	\ker(i^\pm)^*\circ(j^\pm)^*$, so $K^+\oplus K^-$ lies in the kernel of the Kirwan map. For the reverse inclusion, consider the Mayer-Vietoris sequence (see Proposition \ref{prop MVS}) for $(M, M^+, M^-)$ (or, more precisely, of the two open sets $\{x \in M \mid \pm \Psi(x) < \epsilon\}$ for sufficiently small $\epsilon >0$ which, by the Morse-Bott property of $\Psi$, are of the same $G\times T$-homotopy type as $M^\pm$.). In \eqref{eq MVS i j}, we saw that it actually consists of the short exact sequences
	\begin{align*}
	  0 \rightarrow H_{G}^*(M,\CF) \stackrel{(j^+)^* \oplus (j^-)^*}{\rightarrow} \! \! \! H_{G}^*(M^+,\CF)\oplus H_{G}^*(M^-,\CF)\stackrel{(i^+)^* - (i^-)^*}{\to} \! \! H_{G}^*(\Psi^{-1}(0),\CF)\rightarrow 0.
	\end{align*}
	Now, suppose $\eta$ lies in the kernel of the Kirwan map, i.e., $(i^\pm)^*(j^\pm)^*\eta = 0$. This means, however, that $ (j^+)^*\eta \oplus 0$ and $ 0 \oplus(j^-)^*\eta $ lie in the kernel of $(i^+)^* - (i^-)^*$. By exactness of above sequence, there exist $\eta^\pm \in H_{G}^*(M,\CF) $ such that $(j^+)^* \oplus (j^-)^*(\eta^\pm)= (j^\pm)^*\eta$, in particular, $\eta^\pm \in K^\pm$ by Corollary \ref{cor ker j pm}. Then $(j^+)^* \oplus (j^-)^*(\eta^+ + \eta^-)=(j^+)^* \oplus (j^-)^*(\eta)$. But $(j^+)^* \oplus (j^-)^*$ is injective because the sequence is exact, thus $\eta = \eta^+ + \eta^- \in K^+ \oplus K^-$.
\end{proof}
\begin{remark}
	We remark that Theorem \ref{thm kernel} can also be proved similarly to the proof presented in \cite[Theorem 2]{tolman2003kernel}. In the symplectic case, this proof generalizes to the setting of the action of higher rank tori, where Morse-Bott theory of the norm square of the symplectic moment map is applied. We believe that, in the contact setting, an analogous description of the kernel holds for the action of tori of higher rank, as well. This is work in progress.
\end{remark}

\begin{example}
	Let us continue the example presented in Section \ref{subsection ex sphere}, with $w=(1,...,1,w_n)$. Then \[H_G(M, \CF)= \R[u,s]/\langle e \rangle = \R[u,s]/\langle (u+s)^n (-u+w_ns) \rangle.\] We have $M^+= \{ z \in S^{2n+1} \mid |z_n|^2 \leq \tfrac{1}{2} \}$ and $M^-= \{ z \in S^{2n+1} \mid |z_n|^2 \geq \tfrac{1}{2} \}$
	so that $C^+=\mathit{Crit}(\Psi) \cap M^+ = S^{2n-1}\times \{0\}$ and $C^- = \mathit{Crit}(\Psi)\cap M^- = \{0\} \times S^1$. Making use of homotopy equivalences, Lemma \ref{lem ses} with $\Psi$ yields that we have a short exact sequence and a commuting diagram
	\[ \begin{xy}
 \xymatrix@-13pt{
    0 \ar[r] & H_{G}^{*-\lambda^+}(C^{+},\CF) \ar[dr]_{\cdot e(\nu^-C^+)} \ar[r] & H_{G}^*(M, \CF) \ar[d]_{\cong}^{i_{C^+}^*} \ar[r]  & H_{G}^*(C^-,\CF) \ar[r] &0,\\ 
    & & H^{*}_{G}(C^+,\CF) 
    }
\end{xy}
	\]
	where $e(\nu^-C^+)$ denotes the equivariant basic Euler class of the negative normal bundle $\nu^-C^+$ of $C^+$ and $\lambda^+$ the rank of $\nu^-C^+$ and $i_{C^+}:C^+\rightarrow M$ denotes the inclusion. Similarly, with $-\Psi$, we obtain a short exact sequence
	\[ \begin{xy}
 \xymatrix@-13pt{
    0 \ar[r] & H_{G}^{*-\lambda^-}(C^{-},\CF) \ar[dr]_{\cdot e(\nu^+C^-)} \ar[r] & H_{G}^*(M, \CF) \ar[d]_{\cong}^{i_{C^-}^*} \ar[r]  & H_{G}^*(C^+,\CF) \ar[r] &0,\\ 
    & & H^{*}_{G}(C^-,\CF) 
    }
\end{xy}
	\]
	where $e(\nu^+C^-)$ denotes the equivariant basic Euler class of the positive normal bundle $\nu^+C^-$ of $C^-$ and $\lambda^-$ the rank of $\nu^+C^-$ and $i_{C^-}:C^-\rightarrow M$ denotes the inclusion.
	Note that the standard Riemannian metric $g$ on $S^{2n+1}$ is $S^1\times T$-invariant. The normal bundles of $C^+$ and $C^-$ are then given by $\nu C^+ = (\{0\}\times \mathbb{C})\times C^+= \operatorname{span}\{\partial_{x_n}, \partial_{y_n}\}$ and $\nu C^- = (\mathbb{C}^n \times \{0\})\times C_2=\operatorname{span}\{\partial_{x_j}, \partial_{y_j} \mid j=0, ..., n-1\}$, respectively, where we used the notation $z_j = x_j + iy_j$. In these bases, the Hessian $H$ of $\Psi$ computes as
	\[ H|_{\nu C^+} = \left(\begin{smallmatrix} -2(1+w_n) & 0 \\ 0 & -2(1+w_n) \end{smallmatrix}\right), \quad H|_{\nu C^-} = \left(\begin{smallmatrix} \tfrac{2(1+w_n)}{w_n^2} & 0 & & &0\\ 
				      0 & \tfrac{2(1+w_n)}{w_n^2} & & & \\
				       & & \ddots & &\\
				       & & & \tfrac{2(1+w_n)}{w_n^2} & 0 \\ 
				       0 & & & 0 & \tfrac{2(1+w_n)}{w_n^2}\end{smallmatrix}\right).\]
	Since $w_n>0$, it follows that $\nu^- C^+=\nu C^+$ and $\nu^+ C^-=\nu C^-$. Similarly to the computation in Section \ref{subsection ex sphere}, we compute $i_{C^+}^*=(s \mapsto d\alpha - u)$, $i_{C^-}^*=(s\mapsto \tfrac{u}{w_n})$ and the Euler classes 
	\begin{align*} 
		e(\nu^-C^+)&=\frac{1}{2\pi}(-u+sw_n) = \frac{1}{2\pi}(w_nd\alpha - (1+w_n)u), \\ e(\nu^+C^-)&=\frac{1}{(2\pi)^n}(u+s)^n = \left(\frac{1+\tfrac{1}{w_n}}{2\pi}\right)^n u^n.
	\end{align*}
 
	Since the inclusion $C^+ \cup C^- \to M$ induces an injective map in equivariant basic cohomology by Corollary \ref{cor injection}, $K^\pm$ consists exactly of those classes that vanish when restricted to $C^\pm$ and that are a multiple of $e(\nu C^\mp)$ when restricted to $C^\mp$. Again making use of injectivity, we get
	\begin{align*}
	K^+&=\R[u,s]\cdot (u+s)^n/\langle e \rangle  \ \quad \subset \R[u,s]/\langle e \rangle \text{ and }\\
	K^-&=\R[u,s]\cdot (-u+sw_n)/\langle e\rangle \subset \R[u,s]/\langle e \rangle.
	\end{align*}
	Indeed, we see that 
	\begin{align*}
		 H_G(M, \CF) \left/\left(K^++K^-\right)\right. &\cong \ \R[u,s]\left/\left(\R[u,s]\cdot (u+s)^n + \R[u,s]\cdot (-u+sw_n)\right)\right. \\
		 & \cong \ \R[u]\left/{\langle u^n \rangle}\right.\\
		 &\cong H_G(\Psi^{-1}(0), \CF).
	\end{align*}
\end{example}

\section{Equivariant Formality}\label{sec formality}
Another well-known result concerning the equivariant cohomology of a symplectic manifold is the \emph{formality} of Hamiltonian actions of compact connected Lie groups $H$ on compact symplectic manifolds $N$, namely that $H_H(N)$ is a free $S(\mathfrak{h}^* )$-module (cf. \cite[Proposition~5.8]{kirwan1984cohomology}). In this section, we will show that this property also holds for torus actions on $K$-contact manifolds if we consider the basic setting. For a study of equivariantly formal actions in the setting of equivariant basic cohomology of transverse actions, the reader is referred to \cite[Section~3.6]{goertsches2010equivariant}.

\begin{definition}
	Let $(N,\mathcal{E})$ be any foliated manifold, acted on by a torus $H$ s.t. $\Omega(N,\mathcal{E})$ is an $H^*$-algebra. The $H$-action on $(N,\mathcal{E})$ is called \emph{equivariantly formal}, if $H^*_H(N,\mathcal{E})$ is a free $S(\mathfrak{ h}^*)$-module.
\end{definition}

Recall that we considered a torus $G$, that acts on a $K$-contact manifold $(M, \alpha)$, leaving $\alpha$ invariant, in such a way that the contact moment map $\Psi$ has 0 as a regular value. We work with a basis $(X_i)$ of $\g$ according to Proposition \ref{prop basis K-contact} and we, again, denote the foliation induced by the Reeb vector field $\xi$ with $\CF$ and the 1-dimensional $G\times T$-orbits, i.e., the critical points of $\Psi^{X_1}$ and $\Psi$, by $C$, where $T$ denotes the closure of $\{\psi_t\}$.

\begin{lemma}\label{lem C eq formal}
	The $G$-action on $(C, \CF)$ is equivariantly formal. More precisely, we have
	\[H_G^*(C, \CF) \simeq S(\g^*)\otimes H^*(C, \CF).\]
\end{lemma}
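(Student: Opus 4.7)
The plan is to obtain the desired isomorphism essentially as an immediate application of Lemma \ref{lem only 1 g tilde}, once it is verified that the hypotheses are satisfied on $C$. First I would recall from Lemma \ref{lem crit= min Gpsi (GT)-orbits} that $C = \mathit{Crit}(\Psi^{X_1}) = \mathit{Crit}(\Psi)$ consists precisely of those points $x \in M$ with $\widetilde{\g}_x = \g$. In particular, \emph{only one} generalized isotropy algebra occurs on $C$, namely $\widetilde{\g}_C = \g$ itself. By Lemma \ref{lem tot geod}, each connected component of $C$ is a totally geodesic closed submanifold of even codimension (applied to the $s=0$ case with $Y_0 = M$), so $C$ is a $G\times T$-invariant closed submanifold of $M$.

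Next, I would apply Lemma \ref{lem only 1 g tilde} with $U = C$. The lemma yields
\[H^*_G(C, \CF) = S(\widetilde{\g}_C^*) \otimes H^*(\Omega(C, \CF)_{\mathrm{bas}\, \mathfrak{k}}),\]
where $\mathfrak{k}$ denotes any complement of $\widetilde{\g}_C$ in $\g$. Since $\widetilde{\g}_C = \g$, we must take $\mathfrak{k} = \{0\}$. The $\{0\}$-basic condition is then vacuous, so $\Omega(C,\CF)_{\mathrm{bas}\,\{0\}} = \Omega(C,\CF)$, and we obtain
\[H^*_G(C, \CF) \simeq S(\g^*) \otimes H^*(C, \CF),\]
which is manifestly free as an $S(\g^*)$-module. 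This establishes equivariant formality of the $G$-action on $(C,\CF)$.

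There is no real obstacle here; the content of the lemma is already packaged inside Lemma \ref{lem only 1 g tilde}, and the only thing to check is that on $C$ the generalized isotropy algebra is globally constant and equal to $\g$, which is exactly the characterization of $C$ supplied by Lemma \ref{lem crit= min Gpsi (GT)-orbits}. The trivial complement $\mathfrak{k}=\{0\}$ collapses the transversal factor $H^*_{\mathfrak{k}}(C,\CF)$ to ordinary basic cohomology $H^*(C,\CF)$, giving the stated product decomposition.
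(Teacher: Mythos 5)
Your proposal is correct and follows essentially the same route as the paper: identify $C$ as the set of points with $\widetilde{\g}_x = \g$ (the paper cites Proposition \ref{prop basis K-contact}~\textit{(v)}, you cite Lemma \ref{lem crit= min Gpsi (GT)-orbits}, which records the same fact) and then apply Lemma \ref{lem only 1 g tilde} with $\mathfrak{k}=\{0\}$. The extra remarks about $C$ being a closed invariant submanifold are harmless but not needed.
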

\begin{proof}
	We have $C=\mathit{Crit}(\Psi)= \{x \in M \mid \widetilde{\g}_x = \g \}$ by Proposition \ref{prop basis K-contact}, \textit{(v)}. Lemma \ref{lem only 1 g tilde} with $\mathfrak{k}= \{0\}$ yields the claim.
\end{proof}

\begin{proposition}\label{prop M formal}
	The $G$-action on $(M,\CF)$ is equivariantly formal.
\end{proposition}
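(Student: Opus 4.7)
My plan is to run the Morse–Bott machinery of Section~\ref{sec basic kirwan surj} one more time, but this time applied to the single function $\Psi^{X_1}$ on all of $M = Y_0$, and to feed the freeness from Lemma~\ref{lem C eq formal} through the resulting filtration.

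By Proposition~\ref{prop Hess formula} with $s = 0$ the function $\Psi^{X_1}$ is a $G\times T$-invariant Morse–Bott function on $M$, and by Lemma~\ref{lem crit= min Gpsi (GT)-orbits} (again with $s=0$) its critical set is precisely $C$, a disjoint union of minimal $1$-dimensional $G\times T$-orbits; the connected components $B^\kappa_i$ at critical value $\kappa$ inherit the property that their negative normal bundles carry no non-zero $(G\times\{\psi_t\})_{B^\kappa_i}$-fixed vectors (Lemma~\ref{lem no non-zero fixed vectors}), so multiplication with the equivariant basic Euler class $E^\kappa_i \in H^*_G(B^\kappa_i,\CF)$ is injective (Lemma~\ref{lem wedge E injective}). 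This is precisely the hypothesis that makes the long exact sequence of the pair $(M^{\kappa+\epsilon},M^{\kappa-\epsilon})$, with $M^c := (\Psi^{X_1})^{-1}((-\infty,c])$, collapse into short exact sequences
\[
0 \longrightarrow \bigoplus_i H^{*-\lambda^\kappa_i}_G(B^\kappa_i,\CF) \longrightarrow H^*_G(M^{\kappa+\epsilon},\CF) \longrightarrow H^*_G(M^{\kappa-\epsilon},\CF) \longrightarrow 0,
\]
by the same Thom isomorphism / excision / homotopy computation that produced Isomorphism~\eqref{eq rel coh Ys is equal coh nu- comp supp} and the injection $h_j$ in the proof of Theorem~\ref{theorem main}.

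Now I induct on the critical values $\kappa_0 < \kappa_1 < \cdots < \kappa_m$ of $\Psi^{X_1}$. For $c$ below $\kappa_0$, $M^c$ is empty and $H^*_G(M^c,\CF)=0$, which is vacuously free. Assuming $H^*_G(M^{\kappa_j+\epsilon},\CF)$ is a free $S(\g^*)$-module and using the homotopy invariance $H^*_G(M^{\kappa_{j+1}-\epsilon},\CF)=H^*_G(M^{\kappa_j+\epsilon},\CF)$, the short exact sequence above becomes
\[
0 \longrightarrow \bigoplus_i H^{*-\lambda^{\kappa_{j+1}}_i}_G(B^{\kappa_{j+1}}_i,\CF) \longrightarrow H^*_G(M^{\kappa_{j+1}+\epsilon},\CF) \longrightarrow H^*_G(M^{\kappa_j+\epsilon},\CF) \longrightarrow 0.
\]
The left-hand term is free over $S(\g^*)$ because each $B^{\kappa_{j+1}}_i$ is a connected component of $C$ and Lemma~\ref{lem C eq formal} gives $H^*_G(B^{\kappa_{j+1}}_i,\CF)\cong S(\g^*)\otimes H^*(B^{\kappa_{j+1}}_i,\CF)$. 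Since the right-hand term is free by induction, it is in particular projective, so the sequence splits and $H^*_G(M^{\kappa_{j+1}+\epsilon},\CF)$ is a direct sum of free $S(\g^*)$-modules, hence free. After finitely many steps we reach $c > \kappa_m$, where $M^c = M$, giving the desired freeness of $H^*_G(M,\CF)$.

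The only delicate point is verifying that the ingredients assembled in Section~\ref{sec basic kirwan surj} for the restricted functions $\Psi^{X_{s+1}}|_{Y_s}$ really apply verbatim to $\Psi^{X_1}$ on $M$: the Morse–Bott property, the identification of critical components as minimal $G\times T$-orbits, the Thom isomorphism with basic equivariant coefficients, and the non-zero-divisor property of the Euler class. All of these go through without change because they are proved in Section~\ref{sec contact mfd}--\ref{sec eq basic coh} for arbitrary $s$, and $s = 0$ corresponds to $Y_0 = M$; the main conceptual input is simply that the critical set $C$ of $\Psi$ has equivariantly formal basic cohomology, which is exactly Lemma~\ref{lem C eq formal}.
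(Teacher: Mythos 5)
Your proposal is correct and follows essentially the same route as the paper: a Morse--Bott induction over the critical values of $\Psi^{X_1}$ on $M=Y_0$, using the Euler-class injectivity to split the long exact sequences and Lemma~\ref{lem C eq formal} to guarantee freeness of the critical-level contributions. You merely make explicit two points the paper leaves implicit, namely the collapse into short exact sequences and the splitting via projectivity of the free quotient term.
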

\begin{proof}
	Consider $X=X_1 \in \g$ as in Proposition \ref{prop basis K-contact}. By Lemma \ref{lem C eq formal}, the $G$-action on $C$ is equivariantly formal. Let $\kappa_1< ...< \kappa_m$ be the critical values of $\Psi^{X}$ and denote with $B^{\kappa_{j}}_1, ..., B^{\kappa_{j}}_{i_j}$ the connected components of the critical set $C$ at level $\kappa_{j}$ and with $\lambda^{\kappa_{j}}_i$ the indices of the non-degenerate critical submanifolds $B^{\kappa_j}_i$ with respect to  $\operatorname{Hess}(\Psi^X)$. Since $\Psi^X$ is a Morse-Bott function, we obtain (similar to the proof of Theorem \ref{theorem main}) the long exact sequences
	\[ \cdots \rightarrow \oplus_i H^{*-\lambda^{\kappa_j}_i}_{G}(B^{\kappa_j}_i, \CF)  \rightarrow H_{G}^*(M^{\kappa_j + \epsilon_j}, \CF)\rightarrow H_{G}^*(M^{\kappa_j - \epsilon_j}, \CF) \rightarrow \cdots \]
	If $H_{G}^*(M^{\kappa_j + \epsilon_j}, \CF) = H_{G}^*(M^{\kappa_{j+1} - \epsilon_{j+1}}, \CF)$ and $\oplus_i H^{*-\lambda^{\kappa_j}_i}_{G}(B^{\kappa_j}_i, \CF)$ are free $S(\g^*)$-modules, then, by exactness, $H_{G}^*(M^{\kappa_{j+1} + \epsilon_{j+1}}, \CF)$ has to be free, as well. Hence, induction on $j$ yields that $H^*_G(M,\CF)=H_{G}^*(M^{\kappa_m + \epsilon_m}, \CF)$ is a free $S(\g^*)$-module.
\end{proof}

\end{document}